\newtheorem{theorem}{Theorem}[section]
\newtheorem{proposition}{Proposition}[section]
\newtheorem{lemma}{Lemma}[section]
\newtheorem{corollary}{Corollary}[section]
\newtheorem{definition}{Definition}[section]
\newtheorem{remark}{Remark}[section]
\numberwithin{equation}{section} \numberwithin{theorem}{section}
\numberwithin{proposition}{section} \numberwithin{lemma}{section}
\numberwithin{corollary}{section}
\numberwithin{definition}{section} \numberwithin{remark}{section}
\newcommand{\R}{\mathbf R} 
\newcommand{\wto}{\rightharpoonup}
\newcommand{\wsto}{\stackrel{*}{\rightharpoonup}}
\newcommand{\e}{\varepsilon}
\newcommand{\A}{{\mathcal A}}
\newcommand{\LL}{{\mathcal L}}
\newcommand{\HH}{{\mathcal H}}
\newcommand{\M}{{\mathcal M}}
\newcommand{\C}{{\mathscr C}}
\newcommand{\dd}{\,\mathrm{d}}
\DeclareMathOperator{\tr}{tr}
\DeclareMathOperator{\supp}{\text{supp}}
\let\O=\Omega
\newcommand{\dive}{{\rm div }}
\newcommand{\res}{\mathop{\hbox{\vrule height 7pt width .5pt depth 0pt
\vrule height .5pt width 6pt depth 0pt}}\nolimits}
\def \p{\partial}
\author[J.-F. Babadjian]{Jean-Fran\c cois Babadjian}
\author[V. Millot]{ Vincent Millot}
\author[R. Rodiac]{R\'emy Rodiac}
\address[J.-F. Babadjian]{Universit\'e Paris-Saclay, CNRS,  Laboratoire de math\'ematiques d'Orsay, 91405, Orsay, France.}
\email{jean-francois.babadjian@universite-paris-saclay.fr}
\address[V. Millot]{LAMA, Universit\'e  Paris Est Cr\'eteil, Universit\'e Gustave Eiffel, UPEM, CNRS, F-94010, Cr\'eteil, France}
\email{vincent.millot@u-pec.fr}
\address[R. Rodiac]{Universit\'e Paris-Saclay, CNRS,  Laboratoire de math\'ematiques d'Orsay, 91405, Orsay, France.}
\email{remy.rodiac@universite-paris-saclay.fr}
\title[Critical of points of the Ambrosio-Tortorelli functional]{On the convergence of critical points of the Ambrosio-Tortorelli functional}
\begin{document}


\begin{abstract}
This work is devoted to study the asymptotic behavior of critical points \(\{(u_\e,v_\e)\}_{\e>0}\) of the Ambrosio-Tortorelli functional.  {Under a uniform energy bound assumption,} the usual \(\Gamma\)-convergence theory ensures that \((u_\e,v_\e)\) converges in the \(L^2\)-sense to some \((u_*,1)\) as $\varepsilon\to 0$, where \(u_*\) is a special function of bounded variation.  Assuming further the Ambrosio-Tortorelli energy of \((u_\e,v_\e)\) to converge to the Mumford-Shah energy of \(u_*\), the later is shown to be a critical point with respect to inner variations of the Mumford-Shah functional. As a byproduct, the second inner variation is also shown to pass to the limit. To establish these convergence results,  interior \((\C^\infty\)) regularity and boundary regularity for Dirichlet boundary conditions are first obtained for a fixed parameter \(\e>0\). The asymptotic analysis is then performed by means of varifold theory in the spirit of scalar phase transition problems. 
\end{abstract}

\maketitle

\section{Introduction}

Let $\O \subset \R^N$ be a bounded open set with Lipschitz boundary (\(N \geq 1\)) and $g \in H^{\frac12}(\p \O)$  be  a prescribed Dirichlet boundary data on $\partial\O$. For infinitesimal parameters $\e\to 0$ and $\eta_\e \to 0$ with \( 0<\eta_\e \ll \e\), we consider the {\it Ambrosio-Tortorelli functional} defined by
\begin{equation}\label{predefAT}
AT_\e(u,v):= \int_\O (\eta_\e+v^2) |\nabla u|^2\dd x + \int_\O\left(\e|\nabla v|^2 +\frac{(v-1)^2}{4\e}\right) \dd x\, ,
\end{equation}
for all pairs {$(u,v) \in H^1(\O) \times [H^1(\O)\cap L^\infty(\O)]$ satisfying $(u,v)=(g,1)$ on $\partial\O$}. This functional, originally introduced in \cite{Ambrosio_Tortorelli_1990} 
can be interpreted as a {\it phase-field regularization} of the Mumford-Shah functional
\begin{equation}\label{predefMS}
(u,v) \mapsto 
\begin{cases}
\displaystyle MS(u):=\int_\O |\nabla u|^2\dd x + \HH^{N-1}(J_u)+ \HH^{N-1}\big(\partial\O \cap \{u \neq g\}\big) & \text{if }
\begin{cases}
u \in SBV^2(\O)\,,\\
v=1\text{ in }\O\,,
\end{cases}\\[12pt]
+\infty & \text{otherwise}\,.
\end{cases}
\end{equation}
The Mumford-Shah functional is well known as a theoretical tool to approach image segmentation \cite{Mumford_Shah_1985, Marroquin_Mitter_Poggio_1987,Mumford_Shah_1989}. It is also at the heart of the Francfort-Marigo model in fracture mechanics \cite{Francfort_Marigo_1998}, and  the numerical {implementation} of this model heavily relies on Ambrosio-Tortorelli type functionals \cite{Bourdin_2007}. The use of such phase-field approximation in numerics is usually justified through $\Gamma$-convergence theory. In terms of the functionals defined above, it states that $AT_\e$ $\Gamma$-converges in the $[L^2(\O)]^2$-topology as $\e\to0$  towards the Mumford-Shah functional (see e.g.\ the seminal paper \cite{Ambrosio_Tortorelli_1992})

As a consequence, the fundamental theorem of $\Gamma$-convergence ensures the convergence of global minimizers \( (u_\e,v_\e)\) of $AT_\e$ to $(u,1)$ as $\e\to0$ where \(u \in SBV^2(\O)\) is a global minimizer of $MS$. This result is of course of importance, but it is somehow not fully satisfactory. Beyond the fact that the use of global minimizers in the models mentioned above remains under debate, this convergence result does not really provide a rigorous justification of the numerical simulations based on {the} Ambrosio-Tortorelli {functional}.    
One particular feature of  $AT_\e$ is its lack of convexity due to the nonconvex coupling term $v^2|\nabla u|^2$ with respect to the pair $(u,v)$. This is a high obstacle to reach global minimizers through a numerical method. An idea employed in the context of image segmentation or fracture mechanics consists in performing an alternate minimization algorithm,  see \cite{BFM}. Each iteration of the scheme is well-posed since $AT_\e$ is continuous, coercive, and separately strictly convex. Letting the number of steps going to infinity, the sequence of iterates turns out to converge to a critical point of the energy $AT_\e$  \cite[Theorem 1]{Bourdin_2007}, but this critical point might fail to be a global minimizer. Consequently, the original target of numerically approximating global minimizers of the Mumford-Shah functional might be lost. These issues motivate the question of convergence  as $\e\to0$ of critical points of the Ambrosio-Tortorelli functional and it constitutes the main goal of this article, continuing a task initiated in \cite{Francfort_Le_Serfaty_2009,Le_2010} in dimension $N=1$.  In higher dimension, a fundamental issue in such an analysis is the regularity of critical points of $AT_\e$. It is also of importance for numerics  as the efficiency of the numerical methods crucially rests on it. Here, we fully resolve this last question showing smoothness of arbitrary critical points according to the smoothness of $\partial\O$ and the Dirichlet boundary data.

\medskip

A critical point $(u_\e,v_\e)$ of the Ambrosio-Tortorelli functional is a weak (distributional) solution of the nonlinear elliptic system 
\begin{equation}\label{eq:PDE}
\begin{cases}
\displaystyle -\dive\big((\eta_\e+v_\e^2) \nabla u_\e\big)=0 & \text{ in }\O\,,\\[5pt]
\displaystyle -\e \Delta v_\e+\frac{v_\e-1}{4\e}+v_\e |\nabla u_\e|^2=0 & \text{ in }\O\,,\\[5pt]
(u_\e, v_\e)=(g,1) & \text{ on }\partial\O\,.
\end{cases}
\end{equation}
To be more precise, critical points of $AT_\e$ are defined as follows.

\begin{definition}\label{def:critical_AT}
{\rm Let $\O \subset \R^N$ be a bounded open set {with Lipschitz boundary} and {$g \in H^{\frac12}(\partial\O)$}. A pair 
$${(u_\e,v_\e)\in \mathcal A_g(\O):=\Big\{(u,v) \in H^1(\O) \times [H^1(\O)\cap L^\infty(\O)] : \; (u,v)=(g,1) \text{ on }\partial\O\Big\}}$$
is a {\it critical point of the Ambrosio-Tortorelli functional} if
$$\frac{\dd}{\dd t} \Bigl|_{t=0} AT_\e(u+t\phi,v+t\psi)=0\quad  \text{ for all }(\phi,\psi)\in H^1_0(\O)\times [ H_0^1(\O)\cap L^\infty(\O)]\,,$$
that is
\begin{equation}\label{eq:varformu}
\int_\O (\eta_\e+v_\e^2)\nabla u_\e\cdot\nabla\phi\dd x =  0 \quad \text{ for all }\phi\in H^1_0(\O)\,,
\end{equation}
and 
\begin{equation}\label{eq:varformv}
\e\int_\O \nabla v_\e \cdot\nabla \psi \dd x + \int_\O \left(\frac{v_\e-1}{4\e}+v_\e |\nabla u_\e|^2\right)\psi\dd x = 0 \quad \text{ for all }\psi\in H_0^1(\O)\cap L^\infty(\O)\,.
\end{equation}
By density, test functions $(\phi,\psi)$ in \eqref{eq:varformu}-\eqref{eq:varformv}  can equivalently be chosen in $[\C^\infty_c(\O)]^2$, and  \eqref{eq:PDE}  holds in the sense of distributions in $\O$. 
}
\end{definition}

One may expect that critical points of \(AT_\e\) with uniformly bounded energy converge along some subsequence $\e\to0$ to a limit satisfying some first order criticality  conditions for \(MS\). Unfortunately, the theory of \(\Gamma\)-convergence does not provide convergence of critical points towards critical points of the limiting functional. Even for local minimizers such a result usually fails. We refer to \cite[Remark 4.5]{Jerrard_Sternberg_2009} and \cite[Example 3.5.1]{Braides_2014} for counter-examples. However, it has been proved in some specific examples that critical points do converge to critical points, possibly under the assumption of convergence of critical values. This is the case for the Allen-Cahn  (or Modica-Mortola) functional from phase transitions approximating the $(N-1)$-dimensional area functional \cite{PadillaTonegawa,Hutchinson_Tonegawa_2000,Tonegawa_2002,Tonegawa_2005,Guaraco_2018,Gaspar_Guaraco_2018},  the Ginzburg-Landau functional approximating  the $(N-2)$-dimensional area functional \cite{Ambrosio_Soner_1997,Bethuel_Brezis_Orlandi_2001,Lin_Riviere_2001,Stern_2021,Pigati_Stern_2021}, and   
the Dirichlet energy of manifold valued stationary harmonic maps  \cite{Lin99,Lin_Riviere_2002,LinWang1,LinWang2,LinWang3}. These functionals share many features with $AT_\e$, and we shall 
take advantage of the existing theory  to develop our asymptotic analysis of critical points of $AT_\e$. In particular, we shall make an essential use of both outer and inner variations of the energy, a common approach in all these studies.

\subsection{Outer and inner variations}

Definition \ref{def:critical_AT} is simply saying  that the first outer variation of \(AT_\e\) vanishes at \( (u_\e,v_\e) \in {\mathcal A_g(\O)}\) in any direction \( (\phi,\psi)\). In case of a smooth functional like  \(AT_\e\), outer variations coincide with G\^ateaux differentials. 
For \( (u,v) \in {\mathcal A_g(\O)}\) and \( (\phi,\psi)\in H^1_0(\O)\times [ H_0^1(\O)\cap L^\infty(\O)]\) as before, we introduce the following notation for the first and second outer variations of \(AT_\e\) (see Lemma~\ref{lem:outer-var} for explicit formulas)
\begin{align}
\label{1outvar}\dd {AT_\e} (u,v)[\phi,\psi] := \frac{\dd}{\dd t} \Bigl|_{t=0} AT_\e(u+t\phi,v+t\psi)\,,  \\[5pt]
\label{2outvar}\dd^2AT_\e (u,v)[\phi,\psi]:=\frac{\dd^2}{\dd t^2} \Bigl|_{t=0} AT_\e(u+t\phi,v+t\psi)\,. 
\end{align}

\medskip

Concerning the Mumford-Shah functional, the notion of critical points requires some definition and notation. Before doing so, let us first comment on the functional $MS$ in \eqref{predefMS} we are considering. Contrary to $AT_\e$, the admissible $u$'s for $MS$ are not required to agree with $g$ on $\partial\Omega$ in the sense of traces.  In turn, the additional term $\HH^{N-1}(\partial\O \cap \{u\neq g\})$ in the expression of $MS(u)$ penalizes ``boundary jumps'' where the inner trace of $u$ (still denoted by $u$) differs from $g$.
 The expression $u\neq g$ on $\partial\O$ is also intended in the sense of traces. In the sequel, we shall often use the following compact notation
$$MS(u)=\int_\O |\nabla u|^2\dd x +  \HH^{N-1}(\widehat{J}_u)\,, \quad u \in SBV^2(\O)\,,$$
where $\widehat J_u=J_u \cup (\partial\O\cap \{u\neq g\})$, so that 
$$\widehat J_u=J_{\hat u} \;\text{ with }\;\hat u:=u{\bf 1}_\O + G{\bf 1}_{\R^N \setminus \O}\in SBV^2(\R^N)\,,$$
and {\(G \in H^1(\R^N)\) is an arbitrary extension of \(g\)}.
\medskip

Unlike  \(AT_\e\), the Mumford-Shah functional is not smooth and outer variations must be accordingly defined (see e.g. {\cite[Section 7.4]{Ambrosio_Fusco_Pallara_2000}}). 
Given $u$, $\phi \in SBV^2(\O)$ such that \( \widehat{J}_\phi \subset \widehat{J}_u\),   the first and second outer variations of \(MS\) at \(u\)  in the direction $\phi$ are respectively defined and given by
\begin{align*}
\dd MS(u)[\phi]:= \frac{\dd}{\dd t}\Bigl|_{t=0} MS(u+t\phi)=2\int_\O\nabla u\cdot\nabla\phi\,\dd x\,,\\[5pt]
 \dd^2MS(u)[\phi]:= \frac{\dd^2}{\dd t^2}\Bigl|_{t=0} MS(u+t\phi)=2\int_\O|\nabla\phi|^2\,\dd x\,.
 \end{align*}
 In this definition, the requirement  \(\widehat{J}_\phi \subset \widehat{J}_u\)  ensures the differentiability at $t=0$ of the function $t\mapsto MS(u+t\phi)$ since 
\( \mathcal{H}^{N-1}(\widehat{J}_{u+t\phi})\) remains constantly equal to \( \mathcal{H}^{N-1}(\widehat{J}_{u})\) . As a consequence, these differentials provide  only information on the ``regular part'' of the function $u$, and not on the jump set~\(\widehat{J}_u\). Note also that the second order condition \( \dd^2MS(u)[\phi] \geq 0\) is obviously satisfied at any \(u, \phi\) as above.  On the other hand, the condition \(\widehat{J}_\phi \subset \widehat{J}_u\) also implies that the direction $\phi$ must agree with $g$ on $\partial\Omega\cap\{u=g\}$, in agreement with the notion of Dirichlet boundary condition.

It is clear that outer variations are not sufficient to define a  notion of critical point for~$MS$ since admissible perturbations leave the ``singular part''  \( \mathcal{H}^{N-1}(\widehat{J}_{u})\) unchanged. The way to complement outer variations is to consider  {\it inner variations}, i.e., variations under domain deformations. 
In doing so (up to the boundary), we shall assume that $\partial\O$ is at least of class $\C^2$. 

Given a vector field  $X \in \C^1_c(\R^N;\R^N)$ satisfying  $X\cdot \nu_\O=0$ on $\partial\O$ (here \(\nu_\O\) denotes the outward unit normal field on \(\p \O\)), we consider its flow map $\Phi:\R\times\R^N\to\R^N$, i.e., for every $x\in\R^N$, $t\mapsto\Phi(t,x)$ is defined as the unique solution of the system of ODE's
\begin{equation}\label{eq:la_coulée}
\begin{cases}
\displaystyle \frac{\dd \Phi}{\dd t}(t,x)=X(\Phi(t,x))\,, \\[5pt]
\Phi(0,x)=x\,.
\end{cases}
\end{equation}
According to the standard Cauchy-Lipschitz theory, $\Phi\in \C^1(\R\times\R^N;\R^N)$ is well-defined, and $\{\Phi_t\}_{t\in\R}$ with $\Phi_t:=\Phi(t,\cdot)$ is a one-parameter group of $\C^1$-diffeomorphisms of $\R^N$ into itself satisfying $\Phi_0={\rm id}$. Then the requirement $X\cdot\nu_\O=0$ on $\partial\O$ implies that $\Phi_t(\partial\Omega)=\partial\O$ for every 
$t\in\R$. Hence (the restriction of) $\Phi_t$ is a $\C^1$-diffeomorphism of $\partial\O$ into itself, and a $\C^1$-diffeomorphism of $\O$ into itself. 

\begin{definition}\label{def:inner_variations_MS}
Let $u \in SBV^2(\O)$, $X\in\C_c^1(\R^N;\R^N)$, {and $G \in H^1(\R^N)$ satisfying $X\cdot\nu_\O=0$ and $G=g$} on $\partial\O$. Setting $\{\Phi_t\}_{t\in\R}$ to be the integral flow of $X$ and 
$$u_t:= u\circ \Phi_t^{-1}-G\circ \Phi_t^{-1} + G \in SBV^2(\O)\,,$$ 
the first and second inner variations of $MS$ at $u$ are defined by
$$\delta MS(u){[X,G]}:=\frac{\dd}{\dd t}\Bigl|_{t=0} MS(u_t)\,, \quad \delta^2 MS(u){[X,G]}:=\frac{\dd^2}{\dd t^2}\Bigl|_{t=0} MS(u_t)\,.$$
\end{definition}
 It can be checked that, provided $\partial\O$, $g$, and $G$ are smooth enough, the above derivatives exist and they can be explicitly computed (see Lemma \ref{lem:explicit_inner_stab2}). Analogously, we define inner variations of the Ambrosio-Tortorelli functional.

\begin{definition}\label{def:inner_variations_AT} 
\rm{Let $(u,v) \in \mathcal A_g(\O)$, {$X\in\C_c^1(\R^N;\R^N)$, and $G \in H^1(\R^N)$ satisfying $X\cdot\nu_\O=0$ and $G=g$} on $\partial\O$. We set 
$$(u_t,v_t):=\big(u\circ \Phi_t^{-1}-G\circ \Phi_t^{-1}+G,v \circ \Phi_t^{-1}\big) \in \mathcal A_g(\O)\,.$$
 We  define the first and second inner variations of $AT_\e$ at  \( (u,v) \) by 
\begin{equation}\label{eq:def_delta2}
\delta AT_\e(u,v){[X,G]}:=\frac{\dd}{\dd t}\Bigl|_{t=0} AT_\e (u_t,v_t) ,\quad \delta^2 AT_\e(u,v){[X,G]}:=\frac{\dd^2}{\dd t^2}\Bigl|_{t=0} AT_\e(u_t,v_t)\,.
\end{equation}
}
\end{definition}
Once again, the limits in \eqref{eq:def_delta2} exist {whenever $\partial\O$, $g$, and $G$ are sufficiently smooth,} and one can compute them explicitly (see Lemmas \ref{lem:lien_inner_outer} \& \ref{lem:expressions_inner}).
\vskip5pt

We emphasize that we are considering in Definitions \ref{def:inner_variations_MS} \& \ref{def:inner_variations_AT}  deformations {\sl up to the boundary}. Compare to the usual deformations involving compactly supported  perturbations in $\O$ of the original maps, it requires the additional test function $G$. This is of fundamental importance for the $MS$ functional to recover information at the boundary since the Dirichlet boundary condition is implemented in the functional as a penalization. Of course, the type of deformations we are using includes as a particular case the usual ones defined only through a vector field $X$ compactly supported in $\O$, see Remark~\ref{remgendeformations}.

\subsection{First order criticality  conditions for $MS$}

In view of the discussion above, the non smooth character of $MS$ forces the appropriate notion of critical point to involve both outer and inner variations. In other words, a critical point of the Mumford-Shah functional is a critical point with respect to both outer and inner variations, a property obviously satisfied by global (and even local) minimizers.

\begin{definition}
{\rm Let $\O \subset \R^N$ be a bounded open set {with boundary of class at least $\mathscr{C}^2$} and {$g \in \mathscr{C}^2(\partial\O)$}. A function $u_*\in SBV^2(\O)$ is a critical point of the Mumford-Shah functional if 
\begin{equation}\label{cond1MS}
\dd MS(u_*)[\phi]=0\quad \text{ for all $\phi \in SBV^2(\O)$ with $\widehat J_\phi \subset \widehat J_{u_*}$}\,,
\end{equation}
and 
\begin{equation}\label{cond2MS}
\delta MS(u_*)[X,G]=0
\end{equation}
for all $X\in\C_c^1(\R^N;\R^N)$ and $G\in \mathscr{C}^2(\R^N)$ satisfying $X\cdot\nu_\O=0$ and $G=g$ on $\partial\O$.}
\end{definition}

From these criticality conditions, one can derive a set of Euler-Lagrange equations which can be written in a strong form if the smoothness of $u_*$ and \(\widehat{J}_{u_*}\) allows it.  
Specializing first the condition \eqref{cond1MS} to \(\phi \in \C^\infty_c(\O)\) yields
\begin{equation}\label{eq:critic_MS1}
\dive (\nabla u_*)=0\quad \text{ in } \mathscr{D}^\prime(\O)\,.
\end{equation}
Then, if  \(\widehat{J}_{u_*}\) is regular enough,  one can choose test functions $\phi$ in  \eqref{cond1MS} with a non trivial jump set but smooth up to  \(\widehat{J}_{u_*}\) from both sides. It leads to the following homogeneous Neumann condition 
\begin{equation}\label{eq:critic_MS2_Neumann}
\p_\nu u_*=0 \quad\text{ on } \widehat{J}_{u_*}\,,
\end{equation}
see \cite[formula (7.42)]{Ambrosio_Fusco_Pallara_2000}. In other words, allowing test functions $\phi$ in  \eqref{cond1MS}  with $\widehat J_\phi \subset \widehat J_{u_*}$ (and not only in  \(\phi \in \C^\infty_c(\O)\)) provides the weak formulation of \eqref{eq:critic_MS2_Neumann} which complements \eqref{eq:critic_MS1}. 

Computing $\delta MS(u_*)[X,G]$ (see formula \eqref{eq:1146}) and using equation \eqref{eq:critic_MS1}, the stationarity condition \eqref{cond2MS} appears to be independent of the test function $G$ and it reduces to
\begin{multline}\label{def:ctMS}
\int_\O \left(|\nabla u_*|^2 {\rm Id}-2\nabla u_* \otimes \nabla u_*\right):D X\dd x+\int_{\widehat J_{u_*}}  \dive^{\widehat J_{u_*}} X\, \dd\HH^{N-1}=-2\int_{\partial\O} (\nabla u_*\cdot\nu_\O) (X\cdot \nabla_\tau g)\,\dd\mathcal{H}^{N-1}  \\
 \text{for all } X \in \C^1_c(\R^N;\R^N) \text{ with } X\cdot \nu_\O =0 \text{ on } \p \O\,.
\end{multline} 
Here \( \dive^{\widehat J_{u_*}}X =\tr \left( ({\rm Id}-\nu_{u_*}\otimes \nu_{u_*}) D X \right)\) is the tangential divergence of $X$ on the countably $\HH^{N-1}$-rectifiable set $\widehat J_{u_*}$ with \(\nu_{u_*}\) the approximate unit normal to that set. The boundary term in the right hand side of \eqref{def:ctMS} is interpreted in the sense of duality by \eqref{eq:critic_MS1}, and $\nabla_\tau g$ denotes the tangential derivative of $g$. If $J_{u_*}$ and $u_*$ are regular enough, then \eqref{def:ctMS} provides the coupling equation 
$$H_{u_*}+\big[|\nabla u_*|^2\big]^\pm=0 \quad\text{ on } J_{u_*}\,,$$
where $H_{u_*}$ denotes the scalar mean curvature of $J_{u_*}$ with respect to the normal  \(\nu_{u_*}\) and $\big[|\nabla u_*|^2\big]^\pm$ the (accordingly oriented) jump of $|\nabla u_*|^2$ across $J_{u_*}$ (see \cite[Chapter 7, Section 7.4]{Ambrosio_Fusco_Pallara_2000}).

\begin{remark}[\bf 1D case]\label{rem1D} 
{\rm In the one-dimensional case $N=1$, if \( \O= (0,L)$ for some $L>0$, we can see that if \(u  \in SBV^2(0,L)\) satisfies conditions \eqref{eq:critic_MS1}-\eqref{eq:critic_MS2_Neumann}, then \(u \) is either piecewise constant with a finite number of jumps or \(u\) is a globally affine function (with no jump). Indeed, the very definition of $SBV^2(0,L)$ shows that \(u\) has a finite number  of jumps. Then, condition \eqref{eq:critic_MS1} implies that $u$ is affine in between to consecutive jump points, and \eqref{eq:critic_MS2_Neumann} implies that the slope of all affine functions must be zero. However, condition \eqref{def:ctMS} does not play any role because it only implies that \( |u'|\) is constant in \( (0,L)\), where $u'$ is the approximate derivative of $u$. From this, we just deduce that $u$ is a piecewise affine function  with equal slopes in absolute value, and it is not sufficient by itself to prove that \(u\) is piecewise constant. It indicates that the use of $SBV^2$-test functions in \eqref{cond1MS} can not be relaxed to a class of smooth  functions (in any dimension).} 
\end{remark}

\subsection{Main results}
As already mentioned, the main purpose of this article is  to investigate the asymptotic behavior of critical points of the Ambrosio-Tortorelli functional as $\e\to 0$. In view of the $\Gamma$-convergence result, one may expect that critical points converge to critical points, possibly under the assumption of convergence of energies. Without fully resolving this question, our analysis provides the first  answer in this direction in arbitrary dimensions showing that a limit of critical points of $AT_\e$ must  at least be a critical point of $MS$ with respect to inner variations, i.e., a stationary point of $MS$. If a critical point $(u_\e,v_\e)$ of  $AT_\e$ is smooth enough, then it is easy to see that it is also stationary, i.e., $\delta AT_\e(u_\e,v_\e)=0$ (see Lemma \ref{lem:lien_inner_outer}). Hence, if regularity of critical points $AT_\e$ holds, proving the convergence of the first inner  variations implies the announced stationarity of the limit. This is the path we have followed, and the regularity issue is the object of our first main theorem.

\begin{theorem}\label{thmmain1}
Let $\Omega\subset \mathbf{R}^N$ be a bounded open set with Lipschitz boundary and $g\in H^{\frac12}(\partial\Omega)$. If $(u_\e,v_\e)\in\mathcal A_g(\Omega)$ is a critical point of $AT_\e$, then $(u_\e,v_\e)\in [\C^\infty(\Omega)]^2$ and the following  regularity up to the boundary holds. 
\begin{itemize}
\item[\it (i)] If $g\in H^{\frac12}(\partial\Omega)\cap L^\infty(\p\O)$, then $u_\e\in L^\infty(\O)$. 
\vskip5pt
\item[\it (ii)] If $\partial\O$ is of class $\C^{k\vee 2,1}$ and $g\in \C^{k,\alpha}(\partial\O)$ with $k\geq 1$ and $\alpha\in(0,1)$, then $(u_\e,v_\e)\in \C^{k,\alpha}(\overline\O)\times \C^{k\vee2,\alpha}(\overline\O)$. 
\end{itemize}
\end{theorem}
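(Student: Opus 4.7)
The plan is to establish, for fixed $\e>0$, the pointwise bounds $v_\e\in[0,1]$ and (under the $L^\infty$ hypothesis on $g$) $u_\e\in L^\infty(\Omega)$ by test-function arguments; then launch a bootstrap which first produces Hölder regularity of $v_\e$ via Meyers-type higher integrability, and finally inflates this to $\mathscr C^\infty$ interior regularity and the stated boundary regularity by a Schauder iteration.

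\medskip

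\noindent\textbf{Step 1 ($L^\infty$ bounds).} Test \eqref{eq:varformv} successively with $\psi=(v_\e-1)_+\in H^1_0(\O)$ and $\psi=-(v_\e)_-\in H^1_0(\O)$; in each case the three terms obtained have the same sign (using $v_\e\geq0$ where $v_\e>1$ for the upper bound, and $(v_\e-1)\leq -1$ where $v_\e<0$ for the lower bound), forcing $0\leq v_\e\leq 1$ a.e. For (i), since $g\leq\|g\|_{L^\infty(\partial\O)}=:M$ on $\partial\O$, the function $(u_\e-M)_+$ lies in $H^1_0(\O)$; testing \eqref{eq:varformu} with it and using $\eta_\e+v_\e^2\geq\eta_\e>0$ yields $\nabla u_\e=0$ on $\{u_\e>M\}$, hence $u_\e\leq M$; similarly from below.

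\medskip

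\noindent\textbf{Step 2 (Interior Hölder regularity, the technical core).} The equation \eqref{eq:varformu} is a linear divergence-form equation with coefficient $a_\e:=\eta_\e+v_\e^2\in[\eta_\e,\eta_\e+1]$, so Meyers' theorem gives $\nabla u_\e\in L^{2+\delta}_{\rm loc}(\O)$ for some $\delta>0$. The source of the Poisson-type equation \eqref{eq:varformv}, namely $(1-v_\e)/(4\e)-v_\e|\nabla u_\e|^2$, then belongs to $L^{1+\delta/2}_{\rm loc}$, so Calderón--Zygmund yields $v_\e\in W^{2,1+\delta/2}_{\rm loc}$ and by Sobolev embedding $\nabla v_\e\in L^{q}_{\rm loc}$ with $q>2$. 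Feeding the improved integrability of $\nabla v_\e$ back into a Meyers/Gehring self-improvement argument for the $u$-equation enlarges $\delta$; iterating produces $\nabla u_\e,\nabla v_\e\in L^p_{\rm loc}(\O)$ for every $p<\infty$, and in particular $v_\e\in \mathscr C^{0,\alpha}_{\rm loc}(\O)$ for some $\alpha\in(0,1)$.

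\medskip

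\noindent\textbf{Step 3 (Interior Schauder bootstrap).} With $v_\e\in \mathscr C^{0,\alpha}_{\rm loc}$ the coefficient $a_\e$ lies in $\mathscr C^{0,\alpha}_{\rm loc}$, so classical Schauder for divergence form gives $u_\e\in \mathscr C^{1,\alpha}_{\rm loc}$; then $|\nabla u_\e|^2\in \mathscr C^{0,\alpha}_{\rm loc}$ and Schauder for $-\e\Delta v_\e=\text{(Hölder source)}$ yields $v_\e\in \mathscr C^{2,\alpha}_{\rm loc}$. A straightforward induction on the order propagates $u_\e\in \mathscr C^{m,\alpha}_{\rm loc}\Rightarrow v_\e\in \mathscr C^{m+1,\alpha}_{\rm loc}\Rightarrow u_\e\in \mathscr C^{m+1,\alpha}_{\rm loc}$, giving $(u_\e,v_\e)\in[\mathscr C^\infty(\O)]^2$ and proving (i).

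\medskip

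\noindent\textbf{Step 4 (Boundary regularity).} Under the hypotheses of (ii), we fix an extension $G\in \mathscr C^{k,\alpha}(\overline\O)$ of $g$, reduce $u_\e$ to a problem with zero Dirichlet data by subtracting $G$, and locally flatten $\partial\O$ by $\mathscr C^{k\vee2,1}$ diffeomorphisms, which preserves the divergence/Poisson structure with coefficients of the same regularity. The boundary analog of Meyers (for half-balls with zero or smooth traces) and boundary $L^p$ theory apply to run Step~2 up to $\partial\O$, yielding $v_\e\in \mathscr C^{0,\alpha}(\overline\O)$. The Schauder bootstrap of Step~3 then runs up to $\partial\O$, with two distinct ceilings: for $u_\e$ the Dirichlet datum $g\in\mathscr C^{k,\alpha}(\partial\O)$ caps its regularity at $\mathscr C^{k,\alpha}(\overline\O)$, whence $a_\e\in \mathscr C^{k,\alpha}(\overline\O)$; for $v_\e$ the constant Dirichlet datum is irrelevant, and after $u_\e$ is resolved the Poisson equation \eqref{eq:varformv} has RHS in $\mathscr C^{(k\vee2)-2,\alpha}(\overline\O)$, which combined with $\partial\O\in \mathscr C^{k\vee2,1}$ gives $v_\e\in \mathscr C^{k\vee2,\alpha}(\overline\O)$.

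\medskip

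\noindent\textbf{Main obstacle.} Step 2 is the delicate point: in low dimension, a single application of Meyers plus $L^p$-theory provides the Hölder seed for $v_\e$, but in dimension $N\geq3$ the initial Meyers exponent $\delta$ may be too small to clear the threshold $1+\delta/2>N/2$ needed for $W^{2,p}\hookrightarrow \mathscr C^{0,\alpha}$. The feedback loop between the two equations, formalised as a Gehring-type self-improvement, is what closes this gap, and carrying it out rigorously (with the boundary version needed for Step 4) is the genuinely nontrivial part; the Schauder induction in Step 3 and its boundary counterpart in Step 4 are then routine.
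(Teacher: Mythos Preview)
Your Step~1 and the Schauder bootstrap in Step~3 are fine and match the paper. The genuine gap is in Step~2, precisely where you flag the difficulty yourself. The Meyers exponent $\delta$ for the $u$-equation $-\mathrm{div}\big((\eta_\e+v_\e^2)\nabla u_\e\big)=0$ depends only on the $L^\infty$ ellipticity ratio $(\eta_\e+1)/\eta_\e$ of the coefficient, not on any Sobolev regularity of $v_\e$. So after one pass you obtain $\nabla u_\e\in L^{2+\delta}_{\rm loc}$, hence $v_\e\in W^{2,1+\delta/2}_{\rm loc}$, but there is no mechanism by which the gained integrability of $\nabla v_\e$ ``feeds back'' to enlarge $\delta$: the coefficient $\eta_\e+v_\e^2$ is still merely bounded and measurable from the viewpoint of Meyers/Gehring, and the $L^p$ theory for divergence-form equations with arbitrary $p$ requires at least continuity (or VMO) of the coefficients, which is exactly what you are trying to prove. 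In dimension $N\geq 3$ the loop therefore stalls at the initial small $\delta$ and never reaches the threshold $1+\delta/2>N/2$.

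The paper bypasses this iteration entirely. It applies De~Giorgi--Nash--Moser to the $u$-equation, which yields not only $u_\e\in\mathscr C^{0,\alpha}_{\rm loc}$ but, more importantly, the Morrey estimate
\[
\sup_{B_\varrho(x_0)\subset\omega}\ \frac{1}{\varrho^{N-2+2\alpha}}\int_{B_\varrho(x_0)}|\nabla u_\e|^2\,\dd x<\infty.
\]
This decay is precisely what is needed for the source term $v_\e|\nabla u_\e|^2$ of the $v$-equation. One then proves $v_\e\in\mathscr C^{0,\alpha}_{\rm loc}$ by a Campanato comparison argument (borrowed from the regularity theory for harmonic maps, following Rivi\`ere): split $v_\e=v_1+v_2$ where $-\Delta v_1=(1-v_\e)/(4\e^2)$ with the right boundary data (so $v_1\in\mathscr C^{1,\beta}_{\rm loc}$ by Calder\'on--Zygmund), and compare $v_2$ on each ball with its harmonic extension; the Morrey bound on $|\nabla u_\e|^2$ controls the defect and yields a Morrey estimate for $\nabla v_2$. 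No feedback loop is required.

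For Step~4 the paper also takes a different route: instead of flattening plus boundary Meyers, it uses a geodesic reflection across $\partial\Omega$ (after Scheven) to extend $(u_\e,v_\e)$ across the boundary, obtaining a modified system on an enlarged domain with Lipschitz, uniformly elliptic coefficients. Boundary regularity then reduces to the interior argument above applied to the extended system, followed by standard up-to-the-boundary Schauder. Your flattening-plus-boundary-Meyers scheme inherits the same gap as the interior Step~2.
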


We emphasize that the regularity in Theorem \ref{thmmain1} is highly non trivial since the second equation in \eqref{eq:PDE} is of the form $\Delta v=f$ with $f\in L^1$ and standard  linear elliptic theory does not directly apply. Instead, we shall rely on arguments borrowed from the regularity theory for harmonic maps into a manifold, or more generally for variational nonlinear elliptic systems, see e.g.\ \cite{Giaquinta_Martinazzi_2012}. The key issue is to prove H\"older continuity of $v_\e$, that we achieve by proving that it belongs to a suitable Morrey-Campanato space. We treat interior and boundary regularity in a similar way through a reflection argument of independent interest originally devised in \cite{Scheven_2006}.

\medskip

In our second main theorem, we show that, under the assumption of convergence of energies, limits (up to a subsequence) of critical points of \(AT_\e\) are critical points of \(MS\) for the inner variations.

\begin{theorem}\label{thm:main} 
Assume that $\O \subset \R^N$ is a bounded open set of class $\C^{2,1}$ and $g \in \C^{2,\alpha}(\partial\O)$ for some $\alpha \in (0,1)$. Let $\{(u_\e,v_\e)\}_{\e>0}\subset \mathcal A_g(\O)$ be a family of critical points of the Ambrosio-Tortorelli functional. Then, the following properties hold:
\begin{itemize}
\item[\it (i)] If the energy bound
\begin{equation}\label{eq:hypnrj}
\sup_{\e>0}AT_\e(u_\e,v_\e)<\infty
\end{equation}
is satisfied, up to a subsequence, $u_\e \to u_*$ strongly in $L^2(\O)$ as $\e\to 0$ for some $u_* \in SBV^2(\O) \cap L^\infty(\O)$ satisfying $\nabla u_*\cdot\nu_\O \in L^2(\partial\O)$, and $\dd MS(u_*)[\phi]=0$ for all \(\phi \in \C^\infty_c(\O)\), i.e.,  
$$\dive(\nabla u_*)=0 \quad \text{ in }\mathscr{D}^\prime(\O)\,.$$
\item[\it (ii)] If, further, the energy convergence
\begin{equation}\label{eq:hypothesis}
AT_\e(u_\e,v_\e) \to MS(u_*)
\end{equation}
is satisfied, then $\delta MS(u_*)=0$, i.e.,  
\begin{equation}\label{eq:first-var3}
\int_\O \left( |\nabla u_*|^2 {\rm Id} -2\nabla u_* \otimes \nabla u_*\right):D X\dd x+\int_{\widehat J_{u_*}}  \dive^{\widehat J_{u_*}} X\, \dd\HH^{N-1}=-2\int_{\partial\O} (\nabla u_*\cdot \nu_\O) (X\cdot\nabla_\tau g)\dd\HH^{N-1}
\end{equation}
 for all vector field $X \in \C^1_c(\R^N;\R^N)$ with $X\cdot\nu_\O=0$ on $\partial\O$. 
\end{itemize}
\end{theorem}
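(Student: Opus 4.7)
The plan for part (i) is to combine the standard $\Gamma$-convergence compactness with the regularity of Theorem \ref{thmmain1}. The maximum principle applied to the second equation of \eqref{eq:PDE} gives $0\leq v_\e\leq 1$, so that the energy bound \eqref{eq:hypnrj} together with the classical Ambrosio-Tortorelli compactness theorem provides, up to a subsequence, $v_\e\to 1$ in $L^2(\O)$ and $u_\e\to u_*$ in $L^2(\O)$ for some $u_*\in SBV^2(\O)$. The $L^\infty$ bound on $u_*$ follows from Theorem \ref{thmmain1}(i). To obtain $\dive(\nabla u_*)=0$, I would set $w_\e:=(\eta_\e+v_\e^2)\nabla u_\e$; it is divergence-free by \eqref{eq:varformu}, bounded in $L^2(\O;\R^N)$ because $v_\e\leq 1$, and hence, up to subsequence, converges weakly to some $w$ with $\dive w=0$ in $\mathscr{D}^\prime(\O)$. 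Since the energy bound also forces $v_\e\to 1$ locally uniformly away from $\widehat J_{u_*}$, one identifies $w=\nabla u_*$ almost everywhere. The property $\nabla u_*\cdot\nu_\O\in L^2(\p\O)$ is then obtained from uniform boundary estimates stemming from Theorem \ref{thmmain1}(ii): $w_\e\cdot\nu_\O$ is bounded in $L^2(\p\O)$ and its weak limit is the desired normal trace.

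For part (ii), Theorem \ref{thmmain1} makes $(u_\e,v_\e)$ smooth up to $\p\O$, and Lemma \ref{lem:lien_inner_outer} applied to the critical-point condition yields $\delta AT_\e(u_\e,v_\e)[X,G]=0$ for every admissible pair $(X,G)$. Using the explicit formula of Lemma \ref{lem:expressions_inner}, this is a stress-energy identity of the form $\int_\O T_\e : DX \dd x = R_\e^{\p}[X,G]$, where
\begin{equation*}
T_\e:= e_\e\,{\rm Id}-2(\eta_\e+v_\e^2)\,\nabla u_\e\otimes\nabla u_\e - 2\e\,\nabla v_\e\otimes\nabla v_\e,
\end{equation*}
with $e_\e:=(\eta_\e+v_\e^2)|\nabla u_\e|^2+\e|\nabla v_\e|^2+(v_\e-1)^2/(4\e)$ the Ambrosio-Tortorelli energy density, and $R_\e^{\p}[X,G]$ a boundary remainder linear in $X_\tau$ on $\p\O$ and in $\nabla_\tau G$.

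The heart of the proof is to pass to the limit in this stress-energy identity, following the varifold strategy of Hutchinson-Tonegawa adapted to the Ambrosio-Tortorelli setting. Under the energy convergence \eqref{eq:hypothesis} combined with the $\Gamma$-liminf inequality (Proposition \ref{prop:lsc}), the bulk and interface parts of $e_\e$ must separately saturate their respective liminf, forcing, as Radon measures on $\overline\O$,
\begin{equation*}
(\eta_\e+v_\e^2)|\nabla u_\e|^2\dd x \;\wsto\; |\nabla u_*|^2\dd x,\qquad \Big(\e|\nabla v_\e|^2+\frac{(v_\e-1)^2}{4\e}\Big)\dd x \;\wsto\; \HH^{N-1}\res\widehat J_{u_*},
\end{equation*}
together with the vanishing of the modulated discrepancy $\e|\nabla v_\e|^2-(v_\e-1)^2/(4\e)$. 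Rectifiability of the associated limit varifold then yields the anisotropic convergence $\e\,\nabla v_\e\otimes\nabla v_\e\,\dd x\wsto ({\rm Id}-\nu_{u_*}\otimes\nu_{u_*})\,\HH^{N-1}\res\widehat J_{u_*}$, which is precisely what is needed to recover the tangential divergence $\int_{\widehat J_{u_*}}\dive^{\widehat J_{u_*}}X\dd\HH^{N-1}$ in the limit. The $u$-tensor passes to the limit to $|\nabla u_*|^2{\rm Id}-2\nabla u_*\otimes\nabla u_*$, and the boundary remainder $R_\e^{\p}[X,G]$ is matched with $-2\int_{\p\O}(\nabla u_*\cdot\nu_\O)(X_\tau\cdot\nabla_\tau g)\dd\HH^{N-1}$ using the $L^2(\p\O)$-weak convergence of $w_\e\cdot\nu_\O$ established in part (i) and an integration by parts based on $\dive(\nabla u_*)=0$.

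The main obstacle is the varifold/equipartition step: one must show both that the coupling term $v_\e^2|\nabla u_\e|^2$ does not \emph{steal} interface mass, and that the limit varifold is $(N-1)$-rectifiable with normal field $\nu_{u_*}$. These points are delicate because the $v$-equation in \eqref{eq:PDE} contains the source term $v_\e|\nabla u_\e|^2$ that ties the concentration of $v_\e$ directly to the blow-up of $\nabla u_\e$. I would adapt from the Allen-Cahn and Ginzburg-Landau literature the technique of deriving a monotonicity-type identity by inserting carefully chosen outer variations $(0,\psi)$ in \eqref{eq:varformv}, which should provide enough rigidity on the concentration set of the $v$-energy to guarantee rectifiability together with the correct tangential/normal splitting.
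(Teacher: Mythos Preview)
Your outline captures the overall architecture (compactness, equipartition, varifold limit), but there are two genuine gaps where the mechanism you propose would not deliver the conclusion.

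\medskip

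\textbf{The uniform $L^2(\partial\O)$ bound on normal derivatives.} In part (i) you derive $\nabla u_*\cdot\nu_\O\in L^2(\partial\O)$ from ``uniform boundary estimates stemming from Theorem \ref{thmmain1}(ii)''. That theorem only gives \emph{qualitative} $\C^{k,\alpha}$ regularity for each fixed $\e$; nothing in it is uniform as $\e\to 0$, and indeed $\|\partial_\nu u_\e\|_{L^\infty}$ may blow up. In the paper the bound $\sup_\e\int_{\partial\O}\big[(\partial_\nu u_\e)^2+\e(\partial_\nu v_\e)^2\big]\,d\HH^{N-1}<\infty$ is obtained from the Noether identity of Proposition \ref{prop:Noether_conservation}: testing \eqref{eq:first-var} with a vector field $X$ satisfying $X=\nu_\O$ on $\partial\O$ isolates exactly this boundary integral, and its sign (which depends on the Dirichlet condition on $v_\e$, see Remark \ref{finalremark}) makes the energy bound do the work. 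This is also what produces the measure $\lambda_*\in\M^+(\partial\O)$ needed in the boundary blow-up below. Likewise, the identification of the weak $L^2$ limit of $(\eta_\e+v_\e^2)\nabla u_\e$ with $\nabla u_*$ does not use any local uniform convergence $v_\e\to 1$ away from $\widehat J_{u_*}$ (which is not available at this stage); it is done directly in Proposition \ref{prop:lsc} via $v_\e\nabla u_\e\rightharpoonup\nabla u_*$.

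\medskip

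\textbf{Identification of the limit varifold.} Your claimed convergence $\e\,\nabla v_\e\otimes\nabla v_\e\,\dd x\wsto ({\rm Id}-\nu_{u_*}\otimes\nu_{u_*})\,\HH^{N-1}\res\widehat J_{u_*}$ is incorrect (the true limit is $\tfrac12\,\nu_{u_*}\otimes\nu_{u_*}\,\HH^{N-1}\res\widehat J_{u_*}$; it is the \emph{combination} $2\e\nabla v_\e\otimes\nabla v_\e-(\e|\nabla v_\e|^2+\tfrac{(v_\e-1)^2}{4\e}){\rm Id}$ that yields $-({\rm Id}-\nu_{u_*}\otimes\nu_{u_*})$). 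More importantly, the mechanism you sketch --- a monotonicity identity from outer variations $(0,\psi)$ in \eqref{eq:varformv} --- is not how the normal direction is pinned down and is unlikely to work, because the source $v_\e|\nabla u_\e|^2$ in the $v$-equation destroys any clean monotonicity for the phase-field energy alone. The paper proceeds differently: energy convergence already gives $\|V_*\|=\HH^{N-1}\res\widehat J_{u_*}$, so one knows the support; the remaining task is to identify the first moment $\overline A(x)=\int A\,dV_x(A)$. This is done by a blow-up of the limiting Noether identity \eqref{eq:first-varter} at $\HH^{N-1}$-a.e.\ $x_0\in\widehat J_{u_*}$. At interior points, rescaling gives $\int_{T_{x_0}\cap B_1}\overline A(x_0):D\phi\,d\HH^{N-1}=0$ for all $\phi\in\C_c^\infty(B_1;\R^N)$; the Ambrosio--Soner lemma \cite[Lemma 3.9]{Ambrosio_Soner_1997} then forces $\overline A(x_0)$ to be a rank-$(N-1)$ orthogonal projection, and the monotonicity formula for the stationary varifold $\HH^{N-1}\res T_{x_0}\otimes\delta_{\overline A(x_0)}$ identifies its kernel with $\R\nu_{u_*}(x_0)$. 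At boundary points a separate blow-up (Lemma \ref{lem:bdry}) is needed, using tangential and normal test fields and the fact that the limit boundary measure $\lambda_*$ has $(N-2)$-density zero $\HH^{N-1}$-a.e.\ on $\partial\O$.
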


\begin{remark}{\rm 
At this stage, it is still open whether or not $u_*$ is a critical point of $MS$ as we do not know if the outer variation $\dd MS(u_*)$ also vanishes on arbitrary functions $\phi\in SBV^2(\O)$ satisfying $\widehat J_\phi \subset \widehat J_{u_*}$ (and not only on $ \C^\infty_c(\O)$). In other words,  the weak form of the homogeneous Neumann condition \eqref{eq:critic_MS2_Neumann} on $\widehat J_{u_*}$ remains to be established. This is the only missing ingredient to obtain that $u_*$ is a critical point of $MS$. 
}
\end{remark}

 An assumption of convergence of energies similar to \eqref{eq:hypothesis} has been used in \cite{Luckhaus_Modica_1989,Le_2011,Le_2015,Le_Sternberg_2019} to prove that critical points of the Allen-Cahn functional (from phase transitions) converge towards critical points of the perimeter functional, hence to minimal surfaces. The analysis without this assumption  has been first carried out in  \cite{Hutchinson_Tonegawa_2000}, and it shows that critical points converge (in the sense of inner variations) towards integer multiplicity stationary varifolds, a measure theoretic generalization of minimal surfaces allowing for multiplicities. Interfaces with multiplicities do appear as limits of critical points of the Allen-Cahn energy and cannot be excluded, see e.g.\ \cite[Section 6.3]{Hutchinson_Tonegawa_2000}. In our context, a similar phenomenon may appear, so that assumption \eqref{eq:hypothesis} is probably necessary. 

In   \cite{Le_2011,Le_2015,Le_Sternberg_2019},  convergence of  energies is also used to pass to the limit in the second inner variation. Following the same path, 
 \eqref{eq:hypothesis}  allows us to pass to the limit in the second inner variation of \(AT_\e\).  
 It shows that the second inner variations of $AT_\e$ {\sl do not} converge to the second inner variation of $MS$, but to the second inner variation plus a residual additional term. As a byproduct, it follows that limits of  stable critical points of $AT_\e$  
 satisfy an ``augmented'' second order minimality condition. Second order minimality criteria for $MS$ has been addressed in \cite{Cagnetti_Mora_Morini_2008,Bonacini_Morini_2015}. We also note that the convergence of the second inner variation for the Allen-Cahn functional without the assumption of convergence of energies has been studied in \cite{Gaspar_2020}, see also \cite{Hiesmayr_2018}. Convergence of second inner variations is  our third and last main result.

\begin{theorem}\label{thm:main_2}
Assume that $\O\subset \R^N$ is a bounded open set of class $\C^{3,1}$ and $g \in \C^{3,\alpha}(\partial\O)$ for some $\alpha \in (0,1)$. Let $\{(u_\e,v_\e)\}_{\e>0} \subset \mathcal A_g(\O)$ be a family of critical points of the Ambrosio-Tortorelli functional and $u_* \in SBV^2(\O) \cap L^\infty(\O)$ be as in Theorem \ref{thm:main}, satisfying the convergence of energy \eqref{eq:hypothesis}. Then,  
\begin{itemize}
\item[\it (i)]For all  \( X \in \C^2_c(\R^N;\R^N)\) and all $G \in \mathscr C^3(\R^N)$ with $X\cdot\nu_\O=0$ and $G=g$ on $\partial\O$,
\begin{equation}\label{eq:main2_limit_inner}
\lim_{\e \to 0} \delta^2 AT_\e (u_\e,v_\e)[X,G]= \delta^2 MS(u_*)[X,G]+\int_{\widehat J_{u_*}}|D X :(\nu_{u_*}\otimes \nu_{u_*})|^2 \dd \mathcal{H}^{N-1}\,;
\end{equation}
\item[\it (ii)] If $(u_\e,v_\e)$ is a stable critical point of $AT_\e$, i.e.\, 
$$\dd^2 AT_\e(u_\e,v_\e)[\phi,\psi] \geq 0 \quad \text{ for all }(\phi,\psi) \in [\C_c^\infty(\O)]^2,$$
then $u_*$ satisfies the second order inequality
\begin{equation}\label{secordmincritthm}
\delta^2 MS(u_*)[X,G]+\int_{\widehat J_{u_*}}|D X :(\nu_{u_*} \otimes \nu_{u_*})|^2 \dd \mathcal{H}^{N-1}\geq 0
\end{equation}
for all \( X \in \C^2_c(\R^N;\R^N)\) and all $G \in \mathscr C^3(\R^N)$ with $X\cdot\nu_\O=0$ and $G=g$ on $\partial\O$.
\end{itemize}
\end{theorem}

\medskip

In the one-dimensional case,  the asymptotic analysis as $\e\to 0$ of critical points of the Ambrosio-Tortorelli functional has already been carried out in \cite{Francfort_Le_Serfaty_2009,Le_2010} for different sets of boundary conditions. In \cite{Francfort_Le_Serfaty_2009}, a homogeneous Neumann boundary condition is assumed for the phase field variable $v$. The authors proved that if \( \{(u_\e,v_\e)\}_{\e>0}\) is a family of critical points of the Ambrosio-Tortorelli functional satisfying \eqref{eq:hypnrj}, then, up to a subsequence, \((u_\e,v_\e)\to (u,1)\) in \([L^2(\O)]^2\) with \(u \in SBV^2(\O)\) that is either globally affine or piecewise constant with a finite number of jumps, see Remark \ref{rem1D}. This result is extended in  \cite{Le_2010} to the Ambrosio-Tortorelli functional with a fidelity term. Note that our present analysis also applies in the presence of a fidelity term, but we do not consider this case here in order not to add useless difficulties. In a short note \cite{BMR1D}, we have also carried out the 1D analysis in our setting, i.e., with the Dirichlet boundary condition on the $v$ variable. In this case, we have established a convergence result for critical points without assuming the convergence of the energy \eqref{eq:hypothesis}, but proving \eqref{eq:hypothesis} as a consequence of the energy bound  \eqref{eq:hypnrj}. 
 It allows us to exhibit non-minimizing critical points of $AT_\e$ satisfying our energy convergence assumption~\eqref{eq:hypothesis} (see \cite[Remark 1.2]{BMR1D}).

\subsection{Ideas of the convergence proof}

The proof of Theorem \ref{thm:main} relies on the classical compactness argument and the lower bound inequality for the Ambrosio-Tortorelli functional. Indeed, the energy bound for a family \( \{(u_\e,v_\e)\}_{\e>0}\subset \A_g(\O)\) of critical points for $AT_\e$ implies the $L^2(\O)$-convergence (up to a subsequence) of \(u_\e\) to a limit \(u_* \in SBV^2(\O)\), together with a \(\Gamma\)-liminf inequality $MS(u_*) \leq \liminf_\e AT_\e(u_\e,v_\e)$. Our energy convergence assumption \eqref{eq:hypothesis} leads to the {\sl equipartition} of phase field energy, as well as the convergence of the bulk energy. Then, as in \cite{Hutchinson_Tonegawa_2000}, we associate an \((N-1)\)-varifold \(V_\e\) to the phase field variable \(v_\e\), which converges (again up to a subsequence) to a limiting varifold~\(V_*\). The energy convergence \eqref{eq:hypothesis} allows us to identify the mass of \(V_*\), that is \( \|V_*\|=\mathcal{H}^{N-1}\res{\widehat{J}_{u_*}}\). Next, we  use the equations satisfied by \( (u_\e,v_\e)\) in their conservative form to pass to the limit, and find an equation satisfied by \(u_*\) and \(V_*\). The idea is then to employ a blow-up argument similar to \cite{Ambrosio_Soner_1997} to identify (the first moment of) \(V_*\), and show that it is the rectifiable varifold associated to \(\widehat{J}_{u_*}\) with multiplicity one. 

To prove Theorem \ref{thm:main_2}, we argue as in \cite{Le_2011,Le_2015,Le_Sternberg_2019}. We observe that the convergence \(V_\e \rightharpoonup V_*\) in the sense of varifolds and the identification of \(V_*\) implies the convergence of quadratic terms \(\e \nabla v_\e \otimes \nabla v_\e \rightharpoonup \frac12 \nu_{u_*}\otimes \nu_{u_*} \mathcal{H}^{N-1}\res{\widehat{J}_{u_*}}\) in the sense of measures. This information is precisely what is needed to pass to the limit in the second inner variation of \(AT_\e\), and we infer from a stability condition on \( (u_\e,v_\e) \in \A_g(\O)\) a stability condition on the limit \(u_* \in SBV^2(\O)\).

\medskip

The paper is organised as follows. Section \ref{sec:2} collects several notation that will be used throughout the paper. In Section \ref{sec:regularity}, we study the regularity theory for critical points of the Ambrosio-Tortorelli functional proving first smoothness in the interior of the domain, and then  smoothness at the boundary.  In Section \ref{sec:compactness}, we prove compactness of a family \( \{(u_\e,v_\e)\}_{\e>0}\) satisfying a uniform energy bound $\sup_\e AT_\e(u_\e,v_\e)<\infty$. The regularity result allows one to derive the conservative form of the equations satisfied by these critical points which itself provides bounds on the normal traces of $u_\e$ and $v_\e$ on $\partial\O$. Then, in Section \ref{sec:convergence}, we improve the previous results by assuming the energy convergence \( AT_\e(u_\e,v_\e) \to MS(u_*)\). From this assumption we obtain equipartition of  the phase field part of the energy. Then, we employ a reformulation in terms of varifolds to pass to the limit in the inner variational equations satisfied by critical points of $AT_\e$ to prove that the weak limit \(u_*\) of \( u_\e\) is a stationary point of the Mumford-Shah energy. 
The asymptotic behavior of the second inner variations is performed  in Section \ref{sec:passing_limit_2nd_variation}. 

\section{Notation and preliminaries}\label{sec:2}

\subsection{Measures} 

The Lebesgue measure in $\R^N$ is denoted by $\LL^N$, and the $k$-dimensional Hausdorff measure by $\HH^k$. We will sometime write $\omega_k$ for the $\LL^k$-measure of the $k$-dimensional unit ball in $\R^k$.

 If $X \subset \R^N$ is a locally compact set and $Y$ an Euclidean space, we denote by $\mathcal M(X;Y)$ the space of $Y$-valued bounded Radon measures in $X$ endowed with the norm $\| \mu \|=|\mu|(X)$, where $|\mu|$ is the variation of the measure $\mu$. If $Y=\R$, we simply write $\mathcal M(X)$ instead of $\mathcal M(X;\R)$. By Riesz representation theorem, $\mathcal M(X;Y)$ can be identified with the topological dual of $\C_0(X;Y)$, the space of continuous functions $f:X \to Y$ such that $\{|f|\geq \e\}$ is compact for all $\e>0$. The weak* topology of $\mathcal M(X;Y)$ is defined using this duality.

\subsection{Functional spaces}

We use standard notation for Lebesgue, Sobolev and H\"older spaces. Given a bounded open set $\O \subset \R^N$, the space of functions of bounded variation is defined by
$$BV(\O)=\{u \in L^1(\O) : \; Du \in \mathcal M(\O;\R^N)\}.$$
We shall also consider the subspace $SBV(\O)$ of special functions of bounded variation made of functions $u \in BV(\O)$ whose distributional derivative can be decomposed as
$Du=\nabla u \LL^N + (u^+-u^-)\nu_u \HH^{N-1} \res J_u$. 
In the previous expression, $\nabla u$ is the Radon-Nikod\'ym derivative of $Du$ with respect to $\LL^N$, and it is called the approximate gradient of $u$. The Borel set $J_u$ is the (approximate) jump set of $u$. It is a countably $\HH^{N-1}$-rectifiable subset of $\O$ oriented by the (approximate) normal direction of jump $\nu_u :J_u \to \mathbf S^{N-1}$, and $u^\pm$ are the one-sided approximate limits of $u$ on $J_u$ according to $\nu_u$. Finally we define
$$SBV^2(\O)=\{u \in SBV(\O) : \; \nabla u \in L^2(\O;\R^N) \text{ and } \HH^{N-1}(J_u)<\infty\}.$$

\subsection{Varifolds}\label{sec:varifold}

Let us recall several basic ingredients of the theory of varifolds (see \cite{Simon_1983} for a detailed description). We denote by $\mathbf G_{N-1}$ the Grassmannian manifold of all $(N-1)$-dimensional linear subspaces of $\R^N$. The set $\mathbf G_{N-1}$ is as usual identified with the set of all orthogonal projection matrices onto $(N-1)$-dimensional linear subspaces of $\R^N$, i.e., $N \times N$ symmetric matrices $A$ such that $A^2=A$ and ${\rm tr}(A)=N-1$, in other words, matrices of the form $A={\rm Id}-e\otimes e$
for some $e \in \mathbf S^{N-1}$.

A $(N-1)$-varifold in $X$ (a locally compact subset of $\R^N$) is a bounded Radon measure on $X\times \mathbf G_{N-1}$. The class of $(N-1)$-varifold in $X$ is denoted by $\mathbf V_{N-1}(X)$. The mass of $V\in \mathbf V_{N-1}(X)$ is simply the measure  $\|V\| \in \M(X)$ defined by $\|V\|(B)=V(B \times \mathbf G_{N-1})$ for all Borel sets $B \subset X$.  We define the first variation of an \((N-1)\)-varifold in $V$ in an open set \(U \subset \R^N\) by
$$\delta V(\varphi)=\int_{U \times \mathbf{G}_{N-1}} D \varphi(x):A \dd V(x,A) \quad \text{ for all } \varphi\in \C^1_c(U;\R^N)\,.$$
We say that an \((N-1)\)-varifold is stationary in $U$ if \(\delta V(\varphi)=0\) for all \(\varphi\) in \(\C^1_c(U;\R^N)\). We recall that such a varifold satisfies the monotonicity formula
$$\frac{\|V\|(B_\varrho(x_0))}{\varrho^{N-1}}=\frac{\|V\|(B_r(x_0))}{r^{N-1}}+\int_{(B_\varrho(x_0)\setminus B_r(x_0)) \times \mathbf G_{N-1}}\frac{|P_{A^\perp}(x-x_0)|^2}{|x-x_0|^{N+1}}\dd V(x,A)$$ 
for all $x_0 \in U$ and $0<r<\varrho$ with $B_\varrho(x_0) \subset U$, where $P_{A^\perp}$ is the orthogonal projection onto the one-dimensional space $A^\perp$ (see \cite[paragraph 40]{Simon_1983}).

\subsection{Tangential divergence}

Let $\Gamma$ be a countably $\HH^{N-1}$-rectifiable set and let $T_x \Gamma$ its approximate tangent space defined for $\HH^{N-1}$-a.e.\ $x \in \Gamma$. We consider an orthonormal basis $\{\tau_1(x),\ldots,\tau_{N-1}(x)\}$ of $T_x \Gamma$ and denote by $\nu(x)$ a normal vector to $T_x\Gamma$. If $\zeta:\R^N \to \R^N$ is a smooth vector field, we denote by
$$\dive^\Gamma \zeta:=\sum_{i=1}^{N-1} \tau_i \cdot \partial_{\tau_i} \zeta=({\rm Id}-\nu \otimes \nu) : D \zeta$$
the tangential divergence, and  $(\partial_{\tau_i} \zeta)^\perp=((\partial_{\tau_i}\zeta)\cdot \nu) \nu=\partial_{\tau_i}\zeta- \sum_{j=1}^{N-1} (\tau_j \cdot \partial_{\tau_i} \zeta)\tau_j$.

\section{Regularity theory for critical points of the Ambrosio-Tortorelli energy}\label{sec:regularity}

In this section, we investigate interior and boundary regularity properties of critical points of the Ambrosio-Tortorelli functional $AT_\e$ for a parameter $\e>0$ which is kept fixed. 

\subsection{Interior regularity}

We first establish interior regularity following ideas used by T. Rivi\`ere in \cite{Riviere_thesis_1993} to prove the regularity of harmonic maps with values into a revolution torus. 

\begin{theorem}\label{prop:int-regularity} 
Let $\O\subset\R^N$ be a bounded open set. If $(u_\e,v_\e)\in H^1(\O)\times [H^1(\O)\cap L^\infty(\O)]$ satisfies \eqref{eq:varformu}-\eqref{eq:varformv}, then
$(u_\e,v_\e)\in [\C^\infty(\O)]^2$. 
\end{theorem}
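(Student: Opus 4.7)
My plan is to run a bootstrap whose bottleneck is Hölder continuity of $v_\e$: once $v_\e\in \C^{0,\alpha}_{\mathrm{loc}}(\O)$ for some $\alpha\in(0,1)$, the coefficient $a_\e:=\eta_\e+v_\e^2$ is Hölder continuous and uniformly elliptic (bounded from below by $\eta_\e>0$), so Schauder theory for divergence-form equations applied to \eqref{eq:varformu} yields $u_\e\in \C^{1,\alpha}_{\mathrm{loc}}$; then $|\nabla u_\e|^2\in \C^{0,\alpha}_{\mathrm{loc}}$, so the right-hand side of $-\e\Delta v_\e=(1-v_\e)/(4\e)-v_\e|\nabla u_\e|^2$ is in $\C^{0,\alpha}_{\mathrm{loc}}$ and $v_\e\in \C^{2,\alpha}_{\mathrm{loc}}$. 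Iterating this loop on the two scalar linear equations produces $\C^\infty$ smoothness.

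To trigger this bootstrap I would start from the bare ellipticity: since $a_\e\in L^\infty(\O)$ lies between $\eta_\e$ and $\eta_\e+\|v_\e\|_{L^\infty}^2$, De Giorgi--Nash--Moser applied to \eqref{eq:varformu} gives $u_\e\in \C^{0,\alpha_0}_{\mathrm{loc}}(\O)$ for some $\alpha_0\in(0,1)$. Testing \eqref{eq:varformu} with $\phi^2(u_\e-\bar u_\e^{x_0,r})$, where $\bar u_\e^{x_0,r}$ is the mean on $B_r(x_0)\Subset \O$ and $\phi$ a standard cut-off, yields a Caccioppoli inequality that combines with the oscillation bound on $u_\e$ to produce
\[
\int_{B_{r/2}(x_0)}|\nabla u_\e|^2\dd x\le C r^{N-2+2\alpha_0}.
\]
Thus $|\nabla u_\e|^2$ lies in the Morrey space $M^{1,N-2+2\alpha_0}_{\mathrm{loc}}(\O)$, which is the crucial piece of quantitative information.

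To transfer this Morrey control into Hölder regularity of $v_\e$, I would test \eqref{eq:varformv} with $\phi^2(v_\e-\bar v_\e)$, where $\bar v_\e$ is the mean of $v_\e$ on the annulus $B_r(x_0)\setminus B_{r/2}(x_0)$ and $\phi$ is a cut-off supported in $B_r(x_0)$ with $\phi\equiv 1$ on $B_{r/2}(x_0)$. Using $v_\e\in L^\infty$, Young's inequality, and the Morrey estimate just derived, the $\int v_\e|\nabla u_\e|^2\phi^2(v_\e-\bar v_\e)$ term is absorbed as a $C r^{N-2+2\alpha_0}$ contribution, and one obtains a Caccioppoli inequality of the form
\[
\int_{B_{r/2}(x_0)}|\nabla v_\e|^2\dd x\le C\int_{B_r(x_0)\setminus B_{r/2}(x_0)}|\nabla v_\e|^2\dd x+Cr^{N-2+2\alpha_0}.
\]
The classical hole-filling trick (adding $C\int_{B_{r/2}}|\nabla v_\e|^2\dd x$ to both sides) then produces a decay of the form $\omega(r/2)\le\theta\,\omega(r)+Cr^{N-2+2\alpha_0}$ with $\theta<1$, where $\omega(r):=\int_{B_r(x_0)}|\nabla v_\e|^2\dd x$. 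Iteration yields $\omega(r)\le Cr^{N-2+2\beta}$ for some $\beta\in(0,\alpha_0]$, and Campanato's characterization of Hölder spaces gives $v_\e\in \C^{0,\beta}_{\mathrm{loc}}(\O)$.

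The main obstacle is precisely this last step: the nonlinear coupling $v_\e|\nabla u_\e|^2$ makes the equation for $v_\e$ only of the form $\Delta v_\e=f$ with $f\in L^1$, so linear $L^p$ theory is unavailable; what rescues the argument is that the De Giorgi--Nash Hölder estimate on $u_\e$ upgrades $|\nabla u_\e|^2$ from $L^1$ to a Morrey space with exponent strictly greater than $N-2$, which is exactly the threshold needed to drive the hole-filling scheme. Once the initial Hölder regularity of $v_\e$ is secured, the Schauder bootstrap described at the outset runs without further difficulty and delivers $(u_\e,v_\e)\in[\C^\infty(\O)]^2$.
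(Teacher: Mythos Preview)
Your overall strategy coincides with the paper's: De Giorgi--Nash--Moser on \eqref{eq:varformu} gives $u_\e\in \C^{0,\alpha_0}_{\rm loc}$ together with the Morrey estimate $\int_{B_r}|\nabla u_\e|^2\le Cr^{N-2+2\alpha_0}$, and once $v_\e\in \C^{0,\alpha}_{\rm loc}$ the Schauder bootstrap runs exactly as you describe. The only delicate point is the passage from the Morrey control on $|\nabla u_\e|^2$ to H\"older continuity of $v_\e$, and here your argument has a genuine gap in dimension $N\ge 3$. The hole-filling iteration $\omega(r/2)\le \theta\,\omega(r)+Cr^{N-2+2\alpha_0}$, with $\theta=C/(1+C)$ and $C$ essentially a fixed multiple of the Poincar\'e constant on the unit annulus, yields only $\omega(r)\le C\, r^{\min(N-2+2\alpha_0,\,\log_2(1/\theta))}$. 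Campanato's theorem requires the exponent to exceed $N-2$, i.e.\ $\theta<2^{-(N-2)}$; nothing forces the structural constant $\theta$ to be this small, and in general it is not. This is the well-known limitation of Widman's hole-filling: it produces H\"older continuity in dimension two, but in higher dimensions it only gives a (possibly very slow) power decay of the Dirichlet energy. Your claim ``iteration yields $\omega(r)\le Cr^{N-2+2\beta}$ for some $\beta\in(0,\alpha_0]$'' is therefore unjustified.

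The paper circumvents this by harmonic replacement rather than hole-filling. After splitting off the bounded part of the right-hand side (which is harmless by Calder\'on--Zygmund), one is left with $-\Delta v_2=-\e^{-1}v_\e|\nabla u_\e|^2$. On each ball $B_r$ one compares $v_2$ with its harmonic extension $w$: subharmonicity of $|\nabla w|^2$ gives the sharp decay $\int_{B_\varrho}|\nabla w|^2\le(\varrho/r)^N\int_{B_r}|\nabla w|^2$, and Dirichlet minimality gives $\int_{B_r}|\nabla w|^2\le\int_{B_r}|\nabla v_2|^2$. The defect $\int_{B_r}|\nabla(v_2-w)|^2$ is controlled, through the equation and the $L^\infty$ bounds on $v_\e,v_2,w$, by $\int_{B_r}|\nabla u_\e|^2\le Cr^{N-2+2\alpha_0}$. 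This produces $\int_{B_\varrho}|\nabla v_2|^2\le C(\varrho/r)^N\int_{B_r}|\nabla v_2|^2+Cr^{N-2+2\alpha_0}$, and now the standard iteration lemma gives the full Morrey exponent $N-2+2\alpha_0$ in every dimension, because the factor $(\varrho/r)^N$ comes from the PDE and is always strictly faster than $(\varrho/r)^{N-2+2\alpha_0}$. An alternative repair of your argument would be to observe that the Morrey bound on $|\nabla u_\e|^2$ is precisely the condition on the lower-order coefficient under which De Giorgi--Nash--Moser applies directly to the scalar equation $-\e\Delta v_\e+(|\nabla u_\e|^2+\frac{1}{4\e})v_\e=\frac{1}{4\e}$.
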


\begin{proof}
For simplicity, we drop the subscript \(\e\) in \((u_\e,v_\e)\) and write instead $(u,v)$. We also assume \(N \geq 2\) since in the case \(N=1\), the regularity of \( (u,v)\) solution of \eqref{eq:varformu}-\eqref{eq:varformv} is elementary. 

By  \eqref{eq:varformu}, $u$ weakly solves 
\begin{equation}\label{weakeq1}
-{\rm div}\big((\eta_\e+v^2)\nabla u\big)=0\quad\text{in $\O$}\,. 
\end{equation}
Setting $M:=\|v\|_{L^\infty(\O)}$, the matrix field \( (\eta_\e+v^2)\text{Id}\) has bounded measurable coefficients and it satisfies $\eta_\e\text{Id}\leq (\eta_\e+v^2)\text{Id}\leq (\eta_\e+M^2)\text{Id}$ a.e.\ in $\O$ in the sense of quadratic forms. 
It is therefore uniformly elliptic and  the De Giorgi-Nash-Moser regularity theorem applies to  equation \eqref{weakeq1}.  It provides the existence of \(\alpha\in (0,1)\) such that \(u \in \C^{0,\alpha}_{\text{loc}}(\O)\) together with the estimate: 
\begin{equation}\label{eq:estimate_Morrey}
K(\omega):=\sup_{ x_0 \in \omega, \, \varrho>0, \, {B}_\varrho(x_0) \subset\omega} \frac{1}{\varrho^{N-2+2\alpha}} \int_{B_\varrho(x_0)} |\nabla u|^2 \dd x<\infty
\end{equation}
for every open subset ${\omega}$ such that  $\overline\omega\subset \O$ (see e.g.\ Theorem 8.13 and Eq. (8.18) in \cite{Giaquinta_Martinazzi_2012}).
\vskip3pt

Now we claim that the function \(v\) belongs to \(\C^{0,\alpha}_{\text{loc}}(\O)\). Before proving this claim, we complete the proof of the theorem. Assuming the claim to be true, we can use the Schauder estimates (see e.g.\ \cite[Theorem 5.19]{Giaquinta_Martinazzi_2012}) to derive from equation \eqref{weakeq1} that \(u\in \C^{1,\alpha}_{\text{loc}}(\O)\). On the other hand, by \eqref{eq:varformv}, $v$ weakly solves 
\begin{equation}\label{weakeq2}
-\e\Delta v=\frac{1-v}{4\e}-|\nabla u|^2v \quad\text{in $\O$}\,. 
\end{equation}
Since the right-hand-side of \eqref{weakeq2} belongs to $\C^{0,\alpha}_{\rm loc}(\O)$, it follows from standard Schauder estimates that $v\in \C^{2,\alpha}_{\rm loc}(\O)$. By a classical bootstrap, it now follows from equations \eqref{weakeq1} and  \eqref{weakeq2} that both $u$ and $v$ are of class $\C^\infty$ in~$\O$. 

\vskip3pt

Hence, it only remains to show the claim \(v\in \C^{0,\alpha}_{\text{loc}}(\O)\). To this purpose,  we fix an arbitrary ball $\overline{B}_{2R}(x_0)\subset\Omega$, and 
we aim to prove that  \(v\in \C^{0,\alpha}_{\text{loc}}(B_R(x_0))\). Consider $v_1 \in H^1(B_{2R}(x_0))$ to be the unique weak solution of 
\begin{equation}\label{eq:equation_v1}
\left\{
\begin{array}{rcll}
 \displaystyle -\Delta v_1&=&  \displaystyle  \frac{1-v}{4\e^2} & \text{ in } B_{2R}(x_0)\,, \\[8pt]
v_1 &=& v & \text{ on } \p B_{2R}(x_0)\,.
\end{array}
\right.
\end{equation}
Since \( \Delta v_1 \in L^\infty(B_{2R}(x_0))\),  the Calder\'on-Zygmund estimates yield \(v_1 \in W^{2,p}_{\text{loc}}(B_{2R}(x_0))\) for every \(p<\infty\). By Sobolev embedding, it follows that \(v_1 \in C^{1,\beta}_{\text{loc}}(B_{2R}(x_0))\) for every \(\beta\in(0,1)\). In particular, we have $v_1\in L^\infty(B_R(x_0))$. 
 
Set $v_2:=v-v_1 \in H_0^1(B_{2R}(x_0))$. By \eqref{weakeq2} and \eqref{eq:equation_v1}, $v_2$ is a weak solution of
\begin{equation}\label{eq:equation_v2}
 -\Delta v_2= -\frac{1}{\e}|\nabla u|^2v  \quad \text{ in } B_{2R}(x_0)\,.
\end{equation}
To show that $v_2\in \C^{0,\alpha}_{\rm loc}(B_R(x_0))$, the Morrey-Campanato Theorem (see e.g. \cite[Theorem 5.7]{Giaquinta_Martinazzi_2012}) ensures that it suffices to prove the following Morrey type estimate: 
\begin{equation}\label{eq:Morrey}
\sup_{y \in B_R(x_0),\, \varrho\in(0,R)} \,\frac{1}{\varrho^{N-2+2\alpha}} \int_{B_\varrho(y)} |\nabla v_2|^2\dd x<\infty\,.
\end{equation}
Let $y\in B_R(x_0)$ and $r\in(0,R)$ arbitrary. We denote by \(w \in v_2+H^1_0(B_r(y))\) the harmonic extension of \(v_2\) in the ball \(B_r(y)\), i.e., the unique (weak) solution of
\begin{equation}\label{eq:equation_w}
\left\{
\begin{array}{rcll}
-\Delta w &=& 0 & \text{ in } B_r(y)\,, \\
w&=& v_2 & \text{ on } \p B_r(y)\,.
\end{array}
\right.
\end{equation}
Since $v_2=v-v_1\in L^\infty(B_R(x_0))$, we have $|w|\leq \|v_2\|_{L^\infty(B_R(x_0))}$ on $\p B_r(y)$, and the weak maximum principle implies that $w\in L^\infty(B_r(y))$ with $\|w\|_{L^\infty(B_r(y))}\leq \|v_2\|_{L^\infty(B_R(x_0))}$. Moreover, $|\nabla w|^2$ being subharmonic in $B_r(y)$, we get that for every $\varrho<r$,
$$\int_{B_\varrho(y)} |\nabla w|^2\dd x \leq \left( \frac{\varrho}{r} \right)^N\int_{B_r(y)} |\nabla w|^2\dd x\,.$$
Recalling $w$ also minimizes the Dirichlet integral among all functions agreeing with $v_2$ on $\p B_r(y)$, we infer that 
\begin{align*}
\nonumber\int_{B_\varrho(y)} |\nabla v_2|^2\dd x & \leq  2\int_{B_\varrho(y)} |\nabla w|^2\dd x+2\int_{B_\varrho(y)} \big|\nabla (w-v_2)\big|^2\dd x \\
\nonumber & \leq 2 \left( \frac{\varrho}{r}\right)^N \int_{B_r(y)} |\nabla w|^2\dd x+2\int_{B_r(y)} \big |\nabla (w-v_2)\big|^2\dd x \\
& \leq  2 \left(  \frac{\varrho}{r}\right)^N \int_{B_r(y)} |\nabla v_2|^2\dd x+2 \int_{B_r(y)}\big|\nabla(w-v_2)\big|^2\dd x
\end{align*}
for every \(\varrho < r\).  Since $w-v_2=0$ on $\p B_r(y)$, \eqref{eq:equation_v2} and \eqref{eq:equation_w} lead to 
$$  \int_{B_r(y)}\big|\nabla(w-v_2)\big|^2\dd x=\frac{1}{\e} \int_{B_r(y)}|\nabla u|^2v(w-v_2)\dd x\leq \frac{2}{\e}\|v\|_{L^\infty(\O)}\|v_2\|_{L^\infty(B_R(x_0))} \int_{B_r(y)}|\nabla u|^2\dd x\,.$$
In view of \eqref{eq:estimate_Morrey}, we have thus proved that for every $y\in B_R(x_0)$ and $0<\varrho\leq r<R$, 
$$\int_{B_\varrho(y)} |\nabla v_2|^2\dd x  \leq  2 \left(  \frac{\varrho}{r}\right)^N \int_{B_r(y)} |\nabla v_2|^2\dd x+ C_1 r^{N-2+2\alpha} $$
with $C_1:=\frac{4}{\e}\|v\|_{L^\infty(\O)}\|v_2\|_{L^\infty(B_R(x_0))}K(B_{2R}(x_0))$. By using a classical iteration lemma (see e.g.\ \cite[Lemma~5.13]{Giaquinta_Martinazzi_2012}), we infer that for every $y\in B_R(x_0)$ and $0<\varrho<R$,
$$\int_{B_\varrho(y)} |\nabla v_2|^2\dd x  \leq  C_\alpha \varrho^{N-2+2\alpha}\bigg(\frac{1}{R^{N-2+2\alpha}}\int_{B_{2R}(x_0)} |\nabla v_2|^2\dd x +C_1\bigg)\,,$$
for a constant $C_\alpha$ depending only on $\alpha$ and $N$. Hence $v_2$ satisfies the Morrey estimate \eqref{eq:Morrey}, and thus \(v_2\in \C^{0,\alpha}_{\text{loc}}(B_R(x_0))\). In turn, \(v=v_1+v_2\in \C^{0,\alpha}_{\text{loc}}(B_R(x_0))\) and the proof of the claim is complete.
\end{proof}

\subsection{Maximum principle and boundary regularity}

We first show a (standard) maximum principle which stipulates that $v_\e$ takes values between $0$ and $1$, and that $u_\e$ is bounded whenever the boundary condition $g$ is. 

\begin{lemma}[Maximum principle]\label{lem:max_princ_for_u}
Let $\O\subset\R^N$ be a bounded open set with Lipschitz boundary and $(u_\e,v_\e)\in H^1(\O)\times [H^1(\O)\cap L^\infty(\O)]$ satisfying \eqref{eq:varformu}-\eqref{eq:varformv}. If $v_\e=1$ on $\partial\O$, then  $ 0 \leq v_\e \leq 1$ a.e.\ in $\O$. In addition, if $u_\e=g$ on $\partial\O$ for a function $g\in H^{\frac12}(\partial\O)\cap L^\infty(\partial\O)$, then $u_\e\in L^\infty(\O)$ and $\|u_\e\|_{L^\infty(\O)} \leq \|g\|_{L^\infty(\partial\O)}$. 
\end{lemma}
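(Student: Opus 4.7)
The plan is a classical Stampacchia-type truncation argument applied successively to the equations \eqref{eq:varformu} and \eqref{eq:varformv}. Let me explain how I would organize each bound.

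First, I would establish $v_\e \le 1$ by testing \eqref{eq:varformv} with the truncation $\psi := (v_\e-1)^+ \in H^1_0(\O)\cap L^\infty(\O)$ (which vanishes on $\partial\O$ thanks to the boundary condition $v_\e=1$ and which is bounded since $v_\e\in L^\infty$). Writing the resulting identity and using the chain rule in Sobolev spaces, the gradient term becomes $\e\int_\O|\nabla(v_\e-1)^+|^2\dd x\geq 0$, the potential term becomes $\int_\O \frac{1}{4\e}\big((v_\e-1)^+\big)^2\dd x\geq 0$, and on the set $\{v_\e>1\}$ one has $v_\e>0$, so the coupling term $\int_\O v_\e|\nabla u_\e|^2(v_\e-1)^+\dd x$ is also nonnegative. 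A sum of three nonnegative quantities equal to zero forces each to vanish; in particular $\nabla(v_\e-1)^+\equiv 0$, hence $(v_\e-1)^+\equiv 0$ by the Poincar\'e inequality in $H^1_0$.

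Second, for $v_\e\ge 0$ I would test \eqref{eq:varformv} with $\psi:= v_\e^- := \max(-v_\e,0) \in H^1_0(\O)\cap L^\infty(\O)$. Here $\nabla v_\e\cdot\nabla v_\e^- = -|\nabla v_\e^-|^2$, and on $\{v_\e<0\}$ one has $v_\e=-v_\e^-$, so the potential contribution becomes $-\int_\O\frac{(v_\e^-)^2 + v_\e^-}{4\e}\dd x$ while the coupling term becomes $-\int_\O (v_\e^-)^2|\nabla u_\e|^2\dd x$. The identity thus reads
\begin{equation*}
-\e\int_\O|\nabla v_\e^-|^2\dd x-\int_\O\frac{(v_\e^-)^2+v_\e^-}{4\e}\dd x-\int_\O (v_\e^-)^2|\nabla u_\e|^2\dd x=0\,,
\end{equation*}
and as each term is nonpositive, each vanishes. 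The first one forces $v_\e^-\equiv 0$.

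Finally, for the $L^\infty$ bound on $u_\e$, I would set $M:=\|g\|_{L^\infty(\partial\O)}$ and test \eqref{eq:varformu} with $\phi:=(u_\e-M)^+$. The only nonobvious point is that $\phi$ is an admissible test function: since $u_\e=g\le M$ on $\partial\O$ in the sense of traces, the trace of $(u_\e-M)^+$ equals $(g-M)^+=0$, so $\phi\in H^1_0(\O)$. Using $\eta_\e+v_\e^2\ge \eta_\e>0$ we obtain $\eta_\e\int_\O|\nabla(u_\e-M)^+|^2\dd x\le 0$, hence $(u_\e-M)^+\equiv 0$ and $u_\e\le M$ a.e. The symmetric test function $-(u_\e+M)^-$ yields $u_\e\ge -M$.

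The argument is essentially routine; the only point deserving care is the admissibility of the truncations as elements of $H^1_0(\O)\cap L^\infty(\O)$ (for $\psi$) or $H^1_0(\O)$ (for $\phi$), which rests on the trace characterization of $H^1_0$ and on the $L^\infty$ assumption on $v_\e$ and $g$. No regularity beyond the hypotheses of Definition \ref{def:critical_AT} is needed, so the lemma holds under the stated Lipschitz regularity of $\partial\O$.
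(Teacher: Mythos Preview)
Your proof is correct and follows essentially the same approach as the paper's: both argue by Stampacchia truncation, testing \eqref{eq:varformv} with $(v_\e-1)^+$ and with $\pm v_\e^-$, and \eqref{eq:varformu} with $(u_\e-M)^+$ and $\pm(u_\e+M)^-$. The only differences are cosmetic sign choices in the test functions.
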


\begin{proof}
For {a generic function} \(f \in L^1(\O)\), we set \( f^+:=(f+|f|)/2\) and \(f^-:= (|f|-f)/2\). For simplicity, we drop the subscript \(\e\) in \((u_\e,v_\e)\) and write instead $(u,v)$.

Since $v-1\in H^1_0(\O)\cap L^\infty(\O)$, it follows that  $(v-1)^+ \in H_0^1(\O) \cap L^\infty(\O)$ with $\nabla(v-1)^+=\nabla v {\bf 1}_{\{v\geq 1\}}$.  A classical argument using $(v-1)^+$  as a test function in \eqref{eq:varformv} leads to $v \leq 1$ a.e.\ in $\O$.  
Next, since \(v=1\) on \(\p \O\), we have  $-v^- \in H_0^1(\O)\cap L^\infty(\O)$ and the same argument with \(-v^-\) as test function in \eqref{eq:varformv} shows that $-v=0$ a.e. in $\O$, that is $v \geq 0$ a.e.\ in $\O$.

Now we assume that $u=g$ on $\partial\O$ with $g\in H^{\frac12}(\partial\O)\cap L^\infty(\partial\O)$ and we set  $M:=\|g\|_{L^\infty(\partial\O)}$. 
Since $|g|\leq M$ on $\partial\O$, we have $(u-M)^+\in H^1_0(\O)$ with $\nabla (u-M)^+=\nabla u \mathbf{1}_{\{u\geq M\}}$. 
Using  $(u-M)^+$ as a test function in \eqref{eq:varformu} yields $\nabla (u-M)^+=0$ a.e. in $\O$ which implies that  $(u-M)^+$ is constant. Since $(u-M)^+ \in H^1_0(\O)$, it follows that $(u-M)^+=0$ a.e.\ in $\O$, that is $u \leq M$ a.e.\ in $\O$. The same argument applied to $(u+M)^- \in H^1_0(\O)$ shows that $u\geq -M$ a.e.\ in $\O$, and thus $\|u\|_{L^\infty(\O)}\leq M$.
\end{proof}

Next we study the boundary regularity of a critical point  \( (u_\e,v_\e)\) of the Ambrosio-Tortorelli energy. Our strategy is to use a local reflexion argument to extend \( (u_\e,v_\e)\) across  the boundary. The extension will then satisfy a modified system of PDEs for which we can apply an interior regularity result (similar to {that of} Theorem \ref{prop:int-regularity}). The reflexion argument originates in \cite{Scheven_2006} and follows the arguments in \cite{Dipasquale_Millot_Pisante_2021}. Note that Lemma \ref{lem:max_princ_for_u} and Theorem \ref{thembdregloc}  together with a standard covering argument completes the proof of Theorem \ref{thmmain1}. 

\begin{theorem}\label{thembdregloc}
Let $\O\subset\R^N$ be a bounded open set and $(u_\e,v_\e)\in H^1(\O)\times [H^1(\O)\cap L^\infty(\O)]$ satisfying \eqref{eq:varformu}-\eqref{eq:varformv}.  Assume that in some  ball $B_{4R}(x_0)$ with $x_0\in\p\O$, the boundary portion $\p\O \cap B_{4R}(x_0)$ is of class~$\C^{k\vee 2,1}$ and $(u_\e,v_\e)=(g,1)$ on $\p\O \cap B_{4R}(x_0)$  for a function $g\in \C^{k,\alpha}(\p\O \cap B_{4R}(x_0))$ with  $k\geq 1$ and $\alpha\in(0,1)$. Then $(u_\e,v_\e)\in \C^{k,\alpha}(\overline\O\cap B_{\theta R}(x_0)) \times \C^{k\vee 2,\alpha}(\overline\O\cap B_{\theta R}(x_0))$ for  some constant $\theta\in(0,1)$.
\end{theorem}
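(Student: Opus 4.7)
The plan is to reduce boundary regularity to an interior regularity question via a local flattening of the boundary followed by an odd-reflection argument, in the spirit of \cite{Scheven_2006, Dipasquale_Millot_Pisante_2021}. Once the reflected system is set up on a full ball, the bootstrap from H\"older to $\C^{k,\alpha}$ proceeds essentially as in the proof of Theorem~\ref{prop:int-regularity}.

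First, I would use the $\C^{k\vee 2,1}$-regularity of $\p\O\cap B_{4R}(x_0)$ to produce a straightening diffeomorphism $\Psi$ sending $\O\cap B_{4R}(x_0)$ onto a half-ball $B_r^+:=B_r\cap\{y_N>0\}$ and $\p\O\cap B_{4R}(x_0)$ onto $B_r\cap\{y_N=0\}$. Extending $g$ to a $\C^{k,\alpha}$ function $G$ on a neighbourhood, the straightened unknowns $(U,V):=(u_\e\circ\Psi^{-1},v_\e\circ\Psi^{-1})$ solve on $B_r^+$ a system of the form
\[
-\dive\bigl(\widetilde A\,(\eta_\e+V^2)\nabla U\bigr)=0,\qquad -\e\,\dive\bigl(\widetilde A\,\nabla V\bigr)+J\bigl(\tfrac{V-1}{4\e}+V\,B\nabla U\cdot\nabla U\bigr)=0,
\]
with uniformly elliptic, bounded $\C^{(k-1)\vee 1,1}$-tensor fields $\widetilde A,B,J$ (depending only on $\Psi$), and with Dirichlet data $U=G\circ\Psi^{-1}$ and $V=1$ on $\{y_N=0\}$.

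Next, I would introduce the corrected unknowns $\widehat U:=U-G\circ\Psi^{-1}$ and $\widehat V:=V-1$, both vanishing on $\{y_N=0\}$, and extend them to the full ball $B_r$ by \emph{odd} reflection across $\{y_N=0\}$. The coefficients $\widetilde A$, $B$, $J$ are simultaneously extended by the compatible rule (even reflection of the $(i,j)$-entries for $i,j<N$ and of the $(N,N)$-entry; odd reflection of the cross-entries $(i,N)$ and $(N,i)$ for $i<N$) so that the extended pair $(\widehat U^\ast,\widehat V^\ast)$ is a weak solution on the full ball $B_r$ of a system of the same structural form, with measurable, uniformly elliptic, bounded coefficients, and no distributional contribution concentrated on $\{y_N=0\}$ precisely because $\widehat U$ and $\widehat V$ vanish there. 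I would then apply the interior machinery of Theorem~\ref{prop:int-regularity}: De Giorgi--Nash--Moser yields a Morrey-Campanato bound for $\nabla\widehat U^\ast$, the harmonic-replacement/splitting argument gives $\widehat V^\ast\in \C^{0,\alpha_0}_{\mathrm{loc}}$ for some $\alpha_0>0$, Schauder estimates then upgrade $\widehat U^\ast$ to $\C^{1,\alpha_0}$ and $\widehat V^\ast$ to $\C^{2,\alpha_0}$, and a classical bootstrap fed by the $\C^{(k-1)\vee 1,1}$-regularity of the reflected coefficients and the $\C^{k,\alpha}$-regularity of $G\circ\Psi^{-1}$ raises $(U,V)$ to $\C^{k,\alpha}\times\C^{k\vee 2,\alpha}$ on a smaller half-ball. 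Pulling back through $\Psi$ yields the claim on $\overline\O\cap B_{\theta R}(x_0)$ for a suitable $\theta\in(0,1)$.

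The main obstacle I anticipate is the bookkeeping of the reflection: one must verify that, in view of the specific structure of the second equation (the linear reaction $(V-1)/(4\e)$ plus the coupling $V\,B\nabla U\cdot\nabla U$), the odd extension of $\widehat V$ and the prescribed reflections of $\widetilde A$, $B$ and $J$ are \emph{compatible}, so that the extended pair actually solves a single weak PDE on $B_r$ with no singular mass on $\{y_N=0\}$. The key observation is that writing $V=\widehat V+1$ makes the reaction linear in $\widehat V$, while the quadratic form $\nabla U\cdot\nabla U$ has the correct parity when one accounts for the odd extension of the $N$-th component of $\nabla U$ produced by the odd extension of $\widehat U$; combined with the chosen reflection rule, this gives the required matching. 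Once compatibility is secured, the remainder is a routine adaptation of Theorem~\ref{prop:int-regularity}, with the assumption $\C^{k\vee 2,1}$ on $\p\O$ (one derivative above the regularity of $g$) ensuring that the reflected coefficients are smooth enough to sustain the Schauder bootstrap up to order $k,\alpha$; Lemma~\ref{lem:max_princ_for_u} then supplies the $L^\infty$-bounds on $u_\e$ and $v_\e$ needed to run the estimates.
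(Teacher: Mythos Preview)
Your proposal is correct and follows essentially the same strategy as the paper: extend the solution across the boundary by reflection, derive an elliptic system on the enlarged domain, obtain initial H\"older regularity via De Giorgi--Nash--Moser and a Morrey--Campanato argument for the phase field, then bootstrap via Schauder. The only notable difference is that the paper implements the reflection \emph{directly} via the geodesic reflection $\boldsymbol{\sigma}_\O(x)=2\boldsymbol{\pi}_\O(x)-x$ built from the nearest-point projection onto $\partial\O$ (as in \cite{Dipasquale_Millot_Pisante_2021}), without first flattening the boundary; this avoids one change of variables but produces the same structural equations with Lipschitz, uniformly elliptic coefficients, and the subsequent analysis (including a dedicated Morrey-type lemma for $-\dive(A\nabla z)=f$ with $f$ in a Morrey space) is essentially what you outline.
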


\noindent{\it Proof.} 
We start by describing the reflexion method that we use to extend functions across $\p\O$ in a neighborhood of the point $x_0$. We assume that $x_0\in\p\O$ and $R>0$ are fixed, and that the assumption of the theorem is satisfied. Since  \(\p\O \cap B_{4R}(x_0)\) is (at least) of class \(\mathscr C^{2,1}\), we can find a small \(\delta_0\in(0,R/2)\) such that the nearest point projection on \(\p \O\cap B_{4R}(x_0)\), denoted by~\(\boldsymbol{\pi}_\O\), is well-defined and (at least) of class \(\mathscr C^{1,1}\) in a tubular neighborhood of size \(2\delta_0\) of \(\p \O\cap B_{4R}(x_0)\) intersected with $B_{3R}(x_0)$. For \(\delta \in (0,2\delta_0]\), we set
$$
\begin{cases}
U_\delta:=\big\{x \in \R^N : \, {\rm dist}(x,\partial\O)<\delta\big\}\cap B_{3R}(x_0)\,, \\
U_\delta^{\text{in}} := \O \cap U_\delta\,,\\
U_\delta^{\text{ex}}:= U_\delta \setminus \overline{\O}\,.
\end{cases}$$
The {\sl geodesic reflexion} across \(\p \O\cap U_{2\delta_0} \) is denoted by \(\boldsymbol{\sigma}_\O: U_{2\delta_0} \rightarrow \mathbb{R}^N\) and it is defined by 
$$\boldsymbol{\sigma}_\O(x):=2\boldsymbol{\pi}_\O(x)-x \quad \text{ for all }x\in U_{2\delta_0}\,.$$ 
The mapping $\boldsymbol{\sigma}_\O$ is an involutive \(\mathscr C^{1,1}\)-diffeomorphism (onto its image) which satisfies  $\boldsymbol{\sigma}_\O(x)=x$ for all $x\in \p\O\cap U_{2\delta_0}$. Reducing  the value of $\delta_0$ if necessary, we have 
$$\boldsymbol{\sigma}_\O\big(U_\delta^{\text{ex}}\cap {B}_R(x_0)\big)\subset U_\delta^{\text{in}}\cap  B_{2R}(x_0) \text{ and } 
\boldsymbol{\sigma}_\O\big(U_\delta^{\text{in}}\cap {B}_R(x_0)\big)\subset U_\delta^{\text{ex}}\cap   B_{2R}(x_0) \text{ for  }  \delta \in (0,2\delta_0)\,.$$

Next we consider the bounded open set 
\begin{equation}\label{defOmtild}
\widetilde{\O}:=  \big(U_{\delta_0}\cap B_R(x_0)\big)\cup\boldsymbol{\sigma}_\O\big(U_{\delta_0}\cap B_R(x_0)\big)\subset U_{\delta_0}\cap B_{2R}(x_0)\,.
\end{equation}
The mapping $\boldsymbol{\sigma}_\O$ being involutive, we have   
$$\boldsymbol{\sigma}_\O(\widetilde{\O})=\widetilde{\O}\,,\quad\boldsymbol{\sigma}_\O(\widetilde{\O}\cap \O)=\widetilde{\O}\setminus\overline\O\quad\text{and}\quad \boldsymbol{\sigma}_\O(\widetilde{\O}\setminus \overline\O)=\widetilde{\O}\cap\O\,. $$
Differentiating the relation $\boldsymbol{\sigma}_\O(\boldsymbol{\sigma}_\O(x))=x$ yields $D\boldsymbol{\sigma}_\O (x)D\boldsymbol{\sigma}_\O(\boldsymbol{\sigma}_\O(x))= \mathrm{Id}$, and thus  
\begin{equation}\label{invgradsig}
D\boldsymbol{\sigma}_\O(\boldsymbol{\sigma}_\O(x))=\big(D\boldsymbol{\sigma}_\O (x)\big)^{-1}\quad \text{for every } x\in \widetilde{\O}\,.
\end{equation}
For \(x \in \p \O\cap \widetilde\O\), one has  \( (D\boldsymbol{\sigma}_\O (x))^Tv=2{\bf p}_x(v)-v\) for all \(v \in \R^N\), where \({\bf p}_x\) is the orthogonal projection from \(\R^N\) onto the tangent space \(T_x( \p \O)\) to $\partial\O$ at $x$, i.e., $(D\boldsymbol{\sigma}_\O(x))^T$ is the reflexion matrix across the hyperplane $T_x(\partial\O)$. In particular, 
\begin{equation}\label{eq:prop_proj}
(D\boldsymbol{\sigma}_\O(x))^TD \boldsymbol{\sigma}_\O(x)=(D\boldsymbol{\sigma}_\O(x))^T(D\boldsymbol{\sigma}_\O(x))^T={\rm Id} \text{ for every } x \in \p \O\cap \widetilde\O\,.
\end{equation}
Now we define for $x\in\widetilde\O$, 
$$j(x):= \begin{cases}
1 & \text{ if } x \in \widetilde\O\cap\O\,, \\
|\det D\boldsymbol{\sigma}_\O(x)| & \text{ if } x \in \widetilde{\O}\setminus \O\,,
\end{cases}$$
and
$$A(x):= \begin{cases}
{\rm Id} & \text{ if } x \in {\O}\,, \\
j(x)\big[D\boldsymbol{\sigma}_\O(\boldsymbol{\sigma}_\O(x))\big]^TD\boldsymbol{\sigma}_\O(\boldsymbol{\sigma}_\O(x)) & \text{ if } x \in \widetilde{\O} \setminus \O\,.
\end{cases}$$
In view of  \eqref{eq:prop_proj},  $j$ and \(A\) are Lipschitz continuous in \(\widetilde{\O}\) and \(A\) is uniformly elliptic, i.e., there exist two constants \(0<\lambda_\O\leq \Lambda_\O\) such that 
$$ \lambda_\O |\xi|^2 \leq A(x)\xi\cdot \xi \leq \Lambda_\O |\xi|^2 \quad\text{for every } (x,\xi)\in \widetilde{\O}\times \R^N\,.$$

With these geometrical preliminaries, we are now ready to provide the extension of $(u_\e,v_\e)$ to $\widetilde\O$. We define for $x\in \widetilde\O$, 
\begin{equation}\label{defuvchap}
\widehat{u}_\e(x):=
\begin{cases}
u_\e(x) & \text{ if } x \in \O\\
u_\e(\boldsymbol{\sigma}_\O(x)) & \text{ if } x \in \widetilde{\O}\setminus \O
\end{cases}\,,\quad\widehat{v}_\e(x):=
\begin{cases}
v_\e(x) & \text{ if } x \in \O\\
v_\e(\boldsymbol{\sigma}_\O(x)) & \text{ if } x \in \widetilde{\O}\setminus \O
\end{cases}\,,
\end{equation}
and
\begin{equation}\label{eq:def_extensions}
\widetilde{u}_\e(x):= \begin{cases}
u_\e(x) & \text{ if } x \in \O \\
2g(\boldsymbol{\pi}_\O(x))-u_\e(\boldsymbol{\sigma}_\O(x)) &\text{ if } x \in \widetilde{\O}\setminus \O
\end{cases}\,,
\;\; \widetilde{v}_\e(x):= \begin{cases}
v_\e(x)& \text{ if } x \in \O \\
2-v_\e(\boldsymbol{\sigma}_\O(x)) & \text{ if } x \in \widetilde{\O}\setminus \O
\end{cases}\,.
\end{equation}
By the chain rule in Sobolev spaces and the fact that the traces of these functions coincide on both side of $\p\O\cap  \widetilde{\O}$, each one of them belongs to $H^1(\widetilde\O)$. In addition, $\widehat{v}_\e$ and $\widetilde{v}_\e$ also belong to $L^\infty(\widetilde \O)$ since $v_\e\in L^\infty(\O)$. We finally set 
$$\widetilde g:=g\circ\boldsymbol{\pi}_\O \in \C^{1,\alpha}\big(\widetilde\O\big)\,. $$
\vskip3pt 

Now we show that  these extensions satisfy suitable equations in the domain $\widetilde\O$.

\begin{lemma}
We have
\begin{equation}\label{eq:1st_extended_pdes}
-\dive \big( (\eta_\e +\widehat{v}_\e^{\,2})A \nabla \widetilde{u}_\e\big)=-2\dive\big( \mathbf{1}_{\widetilde\O \setminus\overline\O} (\eta_\e+\widehat{v}_\e^{\,2})A\nabla \widetilde g \big) \quad \text{ in }\mathscr{D}^\prime(\widetilde\O)\,,
\end{equation}
and
\begin{equation}\label{eq:2sd_extended_pdes}
-\e\dive(A\nabla\widetilde v_\e)=( \mathbf{1}_{\widetilde\O \cap\O} - \mathbf{1}_{\widetilde\O \setminus\overline\O} ) \Big(\frac{j}{4\e}(1-\widehat{v}_\e)-\big(A\nabla \widehat{u}_\e\cdot\nabla\widehat u_\e\big)\widehat{v}_\e \Big) \quad \text{ in }\mathscr{D}^\prime(\widetilde\O)\,.
\end{equation}
\end{lemma}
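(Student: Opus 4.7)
The plan is to verify both identities in their distributional form: test against an arbitrary $\varphi\in\C^\infty_c(\widetilde\O)$, split the integral along $\partial\O$, reduce the exterior piece by the change of variables $x=\boldsymbol{\sigma}_\O(y)$, and recognize the resulting equation as \eqref{eq:varformu} or \eqref{eq:varformv} applied to the admissible test function $\phi(y):=\varphi(y)-\varphi(\boldsymbol{\sigma}_\O(y))$. Extended by $0$ outside $\widetilde\O\cap\O$, this $\phi$ belongs to $H^1_0(\O)\cap L^\infty(\O)$: it vanishes on $\partial\O\cap\widetilde\O$ where $\boldsymbol{\sigma}_\O=\mathrm{id}$, and on $\partial\widetilde\O\cap\O$ where $\varphi$ vanishes by compact support and $\boldsymbol{\sigma}_\O$ preserves $\partial\widetilde\O$, thanks to the symmetric definition \eqref{defOmtild} of $\widetilde\O$.

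For \eqref{eq:1st_extended_pdes}, splitting $\int_{\widetilde\O}(\eta_\e+\widehat v_\e^{\,2})A\nabla\widetilde u_\e\cdot\nabla\varphi\dd x$ along $\partial\O$ and using $\nabla\widetilde u_\e=2\nabla\widetilde g-\nabla\widehat u_\e$ on $\widetilde\O\setminus\overline\O$ isolates the source term $2\,\mathbf{1}_{\widetilde\O\setminus\overline\O}(\eta_\e+\widehat v_\e^{\,2})A\nabla\widetilde g$ appearing on the right-hand side. In the remaining exterior integral, the change of variables $x=\boldsymbol{\sigma}_\O(y)$ produces a Jacobian $|\det D\boldsymbol{\sigma}_\O(y)|$ which, by \eqref{invgradsig} and the involutivity of $\boldsymbol{\sigma}_\O$, satisfies $j(\boldsymbol{\sigma}_\O(y))|\det D\boldsymbol{\sigma}_\O(y)|=1$. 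Combined with the chain rule formulas $\nabla\widehat u_\e(\boldsymbol{\sigma}_\O(y))=D\boldsymbol{\sigma}_\O(\boldsymbol{\sigma}_\O(y))^T\nabla u_\e(y)$ and $\nabla\varphi(\boldsymbol{\sigma}_\O(y))=D\boldsymbol{\sigma}_\O(y)^{-T}\nabla\chi(y)$ where $\chi(y):=\varphi(\boldsymbol{\sigma}_\O(y))$, the specific form of $A$ is tailored so that all Jacobian matrices cancel and one obtains
\[
\int_{\widetilde\O\setminus\overline\O}(\eta_\e+\widehat v_\e^{\,2})A\nabla\widehat u_\e\cdot\nabla\varphi\dd x=\int_{\widetilde\O\cap\O}(\eta_\e+v_\e^2)\nabla u_\e\cdot\nabla\chi\dd y\,.
\]
The claim then reduces to $\int_{\widetilde\O\cap\O}(\eta_\e+v_\e^2)\nabla u_\e\cdot\nabla\phi\dd y=0$, which is exactly \eqref{eq:varformu} tested against $\phi$.

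Equation \eqref{eq:2sd_extended_pdes} is handled analogously. Since $\widetilde v_\e=2-\widehat v_\e$ on $\widetilde\O\setminus\overline\O$, one has $\nabla\widetilde v_\e=-\nabla\widehat v_\e$ there, so the reflected contribution of $\nabla\widetilde v_\e$ carries the opposite sign; this is the origin of the factor $\mathbf 1_{\widetilde\O\cap\O}-\mathbf 1_{\widetilde\O\setminus\overline\O}$ on the right-hand side. The coupling term $\widehat v_\e\,A\nabla\widehat u_\e\cdot\nabla\widehat u_\e$ transforms under the same change of variables, via the algebraic structure of $A$, into $v_\e|\nabla u_\e|^2$ up to the Jacobian factor compensation, matching the corresponding exterior piece of the right-hand side; the constant part $2$ in $\widetilde v_\e$ contributes nothing to $\nabla\widetilde v_\e$. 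Testing the resulting reduced identity against $\phi=\psi-\psi\circ\boldsymbol{\sigma}_\O\in H^1_0(\O)\cap L^\infty(\O)$ and invoking \eqref{eq:varformv} closes the argument.

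The main obstacle is the matrix bookkeeping in the change of variables: one must verify that the composition of Jacobians of $\boldsymbol{\sigma}_\O$ arising from the push-forward, weighted by the Piola-type factor $|\det D\boldsymbol{\sigma}_\O|$, cancels exactly against the definition of $A$, leaving only the standard inner products $\nabla u_\e\cdot\nabla\chi$ and $|\nabla u_\e|^2$ on $\widetilde\O\cap\O$. This is a short but delicate algebraic verification relying crucially on the involutivity relation \eqref{invgradsig}; sign and transpose errors are easy to commit, and both sides should be computed in coordinates to confirm the intended cancellations.
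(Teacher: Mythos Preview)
Your approach is correct and essentially the same as the paper's: the paper decomposes the test function as $\varphi=\varphi^s+\varphi^a$ with $\varphi^{s/a}(x)=\frac12(\varphi(x)\pm\varphi(\boldsymbol{\sigma}_\O(x)))$, and your admissible test function $\phi=\varphi-\varphi\circ\boldsymbol{\sigma}_\O$ is exactly $2\varphi^a$, so the reduction to \eqref{eq:varformu}--\eqref{eq:varformv} via the change of variables $x=\boldsymbol{\sigma}_\O(y)$ is the same computation organized slightly differently. The matrix cancellation you flag as the delicate point is carried out in the paper as \eqref{calceqsym2} (and implicitly uses that $D\boldsymbol{\sigma}_\O=2D\boldsymbol{\pi}_\O-{\rm Id}$ is symmetric).
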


\begin{proof}
Again, for simplicity, we drop the subscript \(\e\).  We fix an arbitrary test function $\varphi\in\mathscr{D}(\widetilde\O)$, and we define for $x\in \widetilde\O$ the {\sl symmetric} and {\sl anti-symmetric} parts of $\varphi$, 
$$\varphi^s(x):=\frac{1}{2}\big(\varphi(x)+\varphi\circ \boldsymbol{\sigma}_\O(x)\big)\,, \quad \varphi^a(x):=\frac{1}{2}\big(\varphi(x)- \varphi\circ \boldsymbol{\sigma}_\O(x)\big)\,.$$
The functions $\varphi^s$ and $\varphi^a$ belong to $\C^{1,1}(\widetilde\O)$ and, by construction, $\varphi^s\circ \boldsymbol{\sigma}_\O=\varphi^s$ and $\varphi^a\circ \boldsymbol{\sigma}_\O=-\varphi^a$.
\vskip3pt

\noindent{\it Step 1: proof of \eqref{eq:1st_extended_pdes}.} We start with the identity 
\begin{multline}\label{calceqsym1}
\int_{\widetilde\O\setminus\overline\O} (\eta_\e+\widehat v^{\,2})(A\nabla \widetilde u)\cdot \nabla\varphi^s\dd x=-\int_{\widetilde\O\setminus\overline\O} (\eta_\e+\widehat v^{\,2})\big( A\nabla ( u\circ \boldsymbol{\sigma}_\O) \big)\cdot  \nabla(\varphi^s\circ  \boldsymbol{\sigma}_\O)\dd x\\
+ 2\int_{\widetilde\O\setminus\overline\O} (\eta_\e+\widehat v^{\,2})(A\nabla \widetilde g)\cdot \nabla\varphi^s\dd x\,.
\end{multline}
Using relation \eqref{invgradsig} and changing variables yields 
\begin{align}
\nonumber\int_{\widetilde\O\setminus\overline\O} (\eta_\e+\widehat v^{\,2})\big( A\nabla ( u\circ \boldsymbol{\sigma}_\O) \big)\cdot  \nabla(\varphi^s\circ  \boldsymbol{\sigma}_\O)\dd x&= \int_{\widetilde\O\setminus\overline\O} (\eta_\e+\widehat v^{\,2})\nabla u\big(\boldsymbol{\sigma}_\O\big)\cdot  \nabla\varphi^s\big(\boldsymbol{\sigma}_\O\big)j \dd x\\
\label{calceqsym2}&=\int_{\widetilde\O\cap\O} (\eta_\e+ \widehat v^{\,2})\big(A\nabla \widetilde u\big)\cdot  \nabla\varphi^s\dd x\,.
\end{align}

Therefore, combining \eqref{calceqsym1} and \eqref{calceqsym2} yields 
$$\int_{\widetilde\O} (\eta_\e+\widehat v^{\,2})(A\nabla \widetilde u)\cdot \nabla\varphi^s\dd x=2\int_{\widetilde\O\setminus\overline\O} (\eta_\e+\widehat v^{\,2})(A\nabla \widetilde g)\cdot \nabla\varphi^s\dd x\,.$$
In the same way, we have  
\begin{multline*}
\int_{\widetilde\O\setminus\overline\O} (\eta_\e+\widehat v^{\,2})(A\nabla \widetilde u)\cdot \nabla\varphi^a\dd x=\int_{\widetilde\O\setminus\overline\O} (\eta_\e+\widehat v^{\,2})\big( A\nabla ( u\circ \boldsymbol{\sigma}_\O) \big)\cdot  \nabla(\varphi^a\circ  \boldsymbol{\sigma}_\O)\dd x\\
+ 2\int_{\widetilde\O\setminus\overline\O} (\eta_\e+\widehat v^{\,2})(A\nabla \widetilde g)\cdot \nabla\varphi^a\dd x\,.
\end{multline*}
Since $\varphi^a=0$ on $\widetilde\O\cap\p\O$, we have $\varphi^a\in H^1_0(\widetilde\O\cap\O)$. Hence, we can use the first equation in \eqref{eq:PDE}  to infer that 
\begin{align*}
\int_{\widetilde\O\setminus\overline\O} (\eta_\e+\widehat v^{\,2})\big( A\nabla ( u\circ \boldsymbol{\sigma}_\O) \big)\cdot  \nabla(\varphi^a\circ  \boldsymbol{\sigma}_\O)\dd x&= \int_{\widetilde\O\setminus\overline\O} (\eta_\e+\widehat v^{\,2})\nabla u\big(\boldsymbol{\sigma}_\O\big)\cdot  \nabla\varphi^a\big(\boldsymbol{\sigma}_\O\big)j \dd x\\
&=\int_{\widetilde\O\cap\O} (\eta_\e+  v^{\,2})\nabla  u\cdot  \nabla\varphi^a\dd x= 0 \,.
\end{align*}
Consequently, 
$$\int_{\widetilde\O} (\eta_\e+\widehat v^{\,2})(A\nabla \widetilde u)\cdot \nabla\varphi\dd x=2\int_{\widetilde\O\setminus\overline\O} (\eta_\e+\widehat v^{\,2})(A\nabla \widetilde g)\cdot \nabla\varphi\dd x\,,$$
and \eqref{eq:1st_extended_pdes} follows. 
\vskip3pt

\noindent{\it Step 2: proof of \eqref{eq:2sd_extended_pdes}.}  We proceed as above, starting with 

$$\e\int_{\widetilde\O\setminus\overline\O} (A\nabla \widetilde v)\cdot \nabla\varphi^s\dd x=-\e\int_{\widetilde\O\setminus\overline\O} \big( A\nabla ( v\circ \boldsymbol{\sigma}_\O) \big)\cdot  \nabla(\varphi^s\circ  \boldsymbol{\sigma}_\O)\dd x=-\e\int_{\widetilde\O\cap\O} (A\nabla \widetilde v)\cdot \nabla\varphi^s\dd x\,,$$
which yields 
\begin{equation}\label{calceqantsym1}
\e\int_{\widetilde\O} (A\nabla \widetilde v)\cdot \nabla\varphi^s\dd x=0\,.
\end{equation}
On the other hand, 
\begin{equation}\label{calceqantsym2}
\e\int_{\widetilde\O\setminus\overline\O} (A\nabla \widetilde v)\cdot \nabla\varphi^a\dd x=\e\int_{\widetilde\O\setminus\overline\O} \big( A\nabla ( v\circ \boldsymbol{\sigma}_\O) \big)\cdot  \nabla(\varphi^a\circ  \boldsymbol{\sigma}_\O)\dd x=\e\int_{\widetilde\O\cap\O} \nabla v\cdot \nabla\varphi^a\dd x\,.
\end{equation}
Since $\varphi^a\in H^1_0(\widetilde\O\cap\O)$, we can apply the second equation in \eqref{eq:PDE} to deduce that 
\begin{equation}\label{calceqantsym3}
\e\int_{\widetilde\O\cap\O} \nabla v\cdot \nabla\varphi^a\dd x= -\int_{\widetilde\O\cap\O}|\nabla u|^2v\varphi^{a}\dd x+\frac{1}{4\e}\int_{\widetilde\O\cap\O}(1-v)\varphi^a\dd x\,.
\end{equation}
Summing up \eqref{calceqantsym1},  \eqref{calceqantsym2}, and  \eqref{calceqantsym3}, and using that \(\boldsymbol{\sigma}_\O\) is an involution leads to 
\begin{multline*}
\e\int_{\widetilde\O} (A\nabla \widetilde v)\cdot \nabla\varphi\dd x= -\int_{\widetilde\O\cap\O}|\nabla \widehat{u}|^2\widehat{v}\varphi\dd x+\frac{1}{4\e}\int_{\widetilde\O\cap\O}(1-\widehat{v})\varphi\dd x\\
 +\int_{\widetilde\O\cap\O}|\nabla \widehat{u}|^2\widehat{v}\varphi\circ\boldsymbol{\sigma}_\O\dd x-\frac{1}{4\e}\int_{\widetilde\O\cap\O}(1-\widehat{v})\varphi\circ\boldsymbol{\sigma}_\O\dd x\,.
 \end{multline*}
Changing variables in the two last integrals, we obtain
$$\e\int_{\widetilde\O} (A\nabla \widetilde v)\cdot \nabla\varphi\dd x= -\int_{\widetilde\O}( \mathbf{1}_{\widetilde\O \cap\O} - \mathbf{1}_{\widetilde\O \setminus\overline\O} )\big((A\nabla \widehat{u})\cdot\nabla\widehat u\big)\widehat{v}\varphi\dd x+\frac{1}{4\e}\int_{\widetilde\O}( \mathbf{1}_{\widetilde\O \cap\O} - \mathbf{1}_{\widetilde\O \setminus\overline\O} )(1-\widehat{v})\varphi j \dd x$$
and \eqref{eq:2sd_extended_pdes} follows. 
\end{proof}

We now provide a general regularity result generalizing the argument used in the proof of the interior regularity.

\begin{lemma}\label{lem:Morrey_estimates}
Let \(A:B_R  \rightarrow \mathbf M^{N \times N}_{\rm sym}\) be a Lipschitz field of symmetric \(N \times N\) matrices which is uniformly elliptic (i.e., there exist \(0<\lambda <\Lambda\) such that \( \lambda|\xi|^2\leq A(x)\xi\cdot\xi\leq \Lambda |\xi|^2\) for all \((x,\xi)\in B_R \times\R^N\)), and \(f \in L^1(B_R)\) satisfying 
\begin{equation}\label{eq:hypf}
 \sup_{B_\varrho(x_0) \subset B_R} \frac{1}{\varrho^{N-2+\gamma}} \int_{B_\varrho(x_0)}|f| \dd x <\infty\,,
\end{equation}
for some \(\gamma \in (0,2]\). If \(z \in H^1(B_R)\cap L^\infty(B_R)\) solves 
\begin{equation}\label{eq:pdev}
-\dive(A\nabla z)= f \text{ in } \mathscr{D}^\prime(B_R)\,,
\end{equation}
then 
\begin{equation}\label{eq:concl}
\sup_{x_0 \in \overline{B}_{R/2}, \, \varrho\in(0,R/2]} \frac{1}{\varrho^{N-2+2\alpha}} \int_{B_\varrho(x_0)}|\nabla z|^2 \dd x <\infty\,,
\end{equation}
and \(z \in \C^{0,\alpha}(B_{R/2})\) for every $\alpha\in(0,\gamma/2)$.
\end{lemma}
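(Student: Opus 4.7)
The plan is to run exactly the comparison-with-harmonic-extension argument used in the proof of Theorem~\ref{prop:int-regularity} for the function $v_2$, but with the constant-coefficient Laplacian replaced by the variable-coefficient operator $-\dive(A\nabla\,\cdot\,)$ and with the target Morrey exponent adjusted to the scale of $f$. The idea is to decompose $z$ on each small ball $B_r(x_0)$ as the sum of an $A$-harmonic extension of its boundary trace and a correction in $H^1_0(B_r(x_0))$: the gradient of the $A$-harmonic part obeys a clean Campanato-type decay of order $N$, while the correction is controlled by means of the equation, hypothesis \eqref{eq:hypf}, and the $L^\infty$ bound on $z$.

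\textbf{Comparison step.} Fix $x_0\in\overline{B}_{R/2}$ and $0<r\leq R/2$, and let $w\in z+H^1_0(B_r(x_0))$ be the weak solution of $-\dive(A\nabla w)=0$ in $B_r(x_0)$. The weak maximum principle for the uniformly elliptic operator $-\dive(A\nabla\,\cdot\,)$ yields $\|w\|_{L^\infty(B_r(x_0))}\leq \|z\|_{L^\infty(B_R)}$, hence
\begin{equation*}
\|w-z\|_{L^\infty(B_r(x_0))}\leq 2\|z\|_{L^\infty(B_R)}.
\end{equation*}
Since $A$ is Lipschitz and uniformly elliptic, the standard Campanato theory for elliptic equations in divergence form with H\"older continuous coefficients (see e.g.\ \cite[Chapter~5]{Giaquinta_Martinazzi_2012}) gives
\begin{equation*}
\int_{B_\varrho(x_0)}|\nabla w|^2\dd x\leq C_0\Big(\tfrac{\varrho}{r}\Big)^{N}\int_{B_r(x_0)}|\nabla w|^2\dd x\qquad\text{for every }0<\varrho\leq r,
\end{equation*}
with $C_0$ depending only on $N$, $\lambda$, $\Lambda$, and the Lipschitz seminorm of $A$. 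On the other hand $z-w\in H^1_0(B_r(x_0))$ solves $-\dive(A\nabla(z-w))=f$ weakly, so testing against $z-w$ itself and using ellipticity, the $L^\infty$ bound above, and \eqref{eq:hypf} gives
\begin{equation*}
\lambda\int_{B_r(x_0)}|\nabla(z-w)|^2\dd x\leq \|z-w\|_{L^\infty(B_r(x_0))}\int_{B_r(x_0)}|f|\dd x\leq C_1\, r^{N-2+\gamma}.
\end{equation*}

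\textbf{Iteration and conclusion.} Writing $\nabla z=\nabla w+\nabla(z-w)$ and combining the two previous inequalities yields, for $0<\varrho\leq r\leq R/2$,
\begin{equation*}
\int_{B_\varrho(x_0)}|\nabla z|^2\dd x\leq C_2\Big(\tfrac{\varrho}{r}\Big)^{N}\int_{B_r(x_0)}|\nabla z|^2\dd x+C_3\, r^{N-2+\gamma}.
\end{equation*}
Since $N-2+\gamma\leq N$, with equality only when $\gamma=2$, the classical Campanato iteration lemma \cite[Lemma~5.13]{Giaquinta_Martinazzi_2012} applied to the nondecreasing function $\varrho\mapsto \int_{B_\varrho(x_0)}|\nabla z|^2\dd x$ delivers, for any $\alpha\in(0,\gamma/2)$,
\begin{equation*}
\int_{B_\varrho(x_0)}|\nabla z|^2\dd x\leq C_4\,\varrho^{N-2+2\alpha}\qquad\text{for all }x_0\in\overline{B}_{R/2},\;\varrho\in(0,R/2],
\end{equation*}
which is exactly \eqref{eq:concl}. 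The H\"older continuity $z\in\C^{0,\alpha}(B_{R/2})$ then follows from the Morrey-Campanato embedding \cite[Theorem~5.7]{Giaquinta_Martinazzi_2012}.

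\textbf{Main obstacle.} The single delicate point is the sharp Campanato decay of $\int_{B_\varrho}|\nabla w|^2$ with exponent $N$. In Theorem~\ref{prop:int-regularity} this was free of charge because $A=\mathrm{Id}$, so that $|\nabla w|^2$ was subharmonic. For merely Lipschitz $A$ one must freeze the coefficients at $x_0$, compare $w$ with the solution $\widetilde w$ of the constant-coefficient equation $-\dive(A(x_0)\nabla\widetilde w)=0$ on $B_r(x_0)$ with the same trace on $\partial B_r(x_0)$, and absorb the perturbation coming from $A-A(x_0)$; the restriction $\alpha<\gamma/2$ in the conclusion is precisely what allows the iteration lemma to accommodate the borderline case $\gamma=2$.
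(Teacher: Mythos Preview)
Your approach is essentially the paper's: compare $z$ on each small ball with an $A$-harmonic (or frozen-coefficient) extension of its boundary values, control the correction via \eqref{eq:hypf} and the $L^\infty$ bound, then iterate. The paper freezes coefficients at $x_0$ from the outset (taking $w$ to solve $-\dive(A(x_0)\nabla w)=0$) and iterates by hand over dyadic radii rather than citing the iteration lemma, but the substance is identical --- and your final paragraph already pinpoints the one place where care is needed.
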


\begin{proof}
Throughout the proof, we fix an exponent $\alpha\in(0,\gamma/2)$ and we set $\beta:=\gamma-2\alpha>0$. We also denote by $K$ an upper bound for $\|z\|_{L^\infty(B_R)}\), and by $M$ an upper bound for \eqref{eq:hypf}. Then $C>0$ shall stand for a constant (which may vary from line to line)  depending only on $N$, $\alpha$, $\gamma$, $\lambda$, $\Lambda$, $K$, $M$, and the Lipschitz constant of $A$. 
\vskip3pt

Let us fix $x_0\in \overline{B}_{R/2}$ and $\varrho\in(0,R/2]$ arbitrary, and consider \(w \in H^1(B_\varrho(x_0))\)  the unique (weak) solution of
\begin{equation}\label{eq:pde_on_v}
\left\{
\begin{array}{rcll}
-\dive(A(x_0)\nabla w)&=& 0 & \text{ in } B_\varrho(x_0)\,, \\
w&=&z & \text{ on } \p B_\varrho(x_0)\,.
\end{array}
\right.
\end{equation}
Recalling that 
$$ \int_{B_\varrho(x_0)} A(x_0)\nabla w\cdot\nabla w\dd x \leq  \int_{B_\varrho(x_0)} A(x_0)\nabla \bar w\cdot\nabla \bar w\dd x\quad\text{for every $\bar w\in w+H^1_0(B_\varrho(x_0))\,,$}$$
we have 
\begin{equation}\label{eq:wmin}
 \lambda\int_{B_\varrho(x_0)} |\nabla w|^2 \dd x \leq \int_{B_\varrho(x_0)} A(x_0)\nabla w \cdot \nabla w \dd x \leq  \int_{B_\varrho(x_0)} A(x_0) \nabla z \cdot \nabla z \dd x\leq \Lambda\int_{B_\varrho(x_0)} |\nabla z|^2 \dd x.
 \end{equation}
Moreover, according to the maximum principle, \(\|w\|_{L^\infty(B_\varrho(x_0))} \leq \|z\|_{L^\infty(B_\varrho(x_0))}\leq K\).  

First, we infer from the triangle inequality, 
\begin{align}
\nonumber \left( \frac{1}{(\varrho/2)^{N-2+2\alpha}}\int_{B_{\frac{\varrho}{2}}(x_0)} A\nabla z\cdot \nabla z \dd x\right)^{\frac12} & \leq 
\left( \frac{1}{(\varrho/2)^{N-2+2\alpha}}  \int_{B_{\frac{\varrho}{2}}(x_0)} A \nabla w \cdot \nabla w  \dd x\right)^{\frac12} \nonumber\\
\nonumber  &\qquad +\left(\frac{1}{(\varrho/2)^{N-2+2\alpha}}\int_{B_\frac{\varrho}{2}(x_0)} A(\nabla z-\nabla w)\cdot (\nabla z-\nabla w) \dd x \right)^{\frac12}\\
& \leq  \left( \frac{1}{(\varrho/2)^{N-2+2\alpha}}  \int_{B_{\frac{\varrho}{2}}(x_0)} A \nabla w \cdot \nabla w \dd x\right)^{\frac12}\nonumber\\
\label{eq:1657}  & \qquad+C\left(\frac{1}{\varrho^{N-2+2\alpha}} \int_{B_\varrho(x_0)} A(\nabla z-\nabla w)\cdot (\nabla z-\nabla w )\dd x\right)^{\frac12}\,.
\end{align}
We start by estimating the first term in the right-hand-side of \eqref{eq:1657} using \eqref{eq:wmin}, and the fact that $A$ is Lipschitz continuous and uniformly elliptic. It yields 
\begin{multline}\label{eq:estimate_I}
  \int_{B_{\frac{\varrho}{2}}(x_0)} A \nabla w \cdot \nabla w  \dd x 
 \leq  (1+C\rho) \int_{B_\varrho(x_0)} A(x_0)\nabla w \cdot \nabla w \dd x \\
\leq  (1+C\rho) \int_{B_\varrho(x_0)} A(x_0)\nabla z \cdot \nabla z \dd x \leq (1+C\rho) \int_{B_\varrho(x_0)} A\nabla z \cdot \nabla z \dd x\,.
\end{multline}
To estimate the second term in the right-hand-side of \eqref{eq:1657}, we make use of equation \eqref{eq:pdev} to write
\begin{align*}
\int_{B_\varrho(x_0)} A (\nabla z-\nabla w) \cdot (\nabla z-\nabla w) \dd x&= \int_{B_\varrho(x_0)} A \nabla z\cdot \nabla( z- w) \dd x -\int_{B_\varrho(x_0)} A \nabla w \cdot \nabla( z- w) \dd x \\
&=\int_{B_\varrho(x_0)} f (z-w) \dd x -\int_{B_\varrho(x_0)} A \nabla w\cdot\nabla ( z- w) \dd x\,.
\end{align*}
Using assumption \eqref{eq:hypf} on \(f\), we infer that
\begin{equation}\label{eq:estimate_Ibis}
 \left|\frac{1}{\varrho^{N-2+2\alpha}} \int_{B_\varrho(x_0)} f(z-w) \dd x\right| \leq 2KM \varrho^{\beta}\,.
 \end{equation}
On the other hand,  Equation \eqref{eq:pde_on_v} satisfied by \(w\) implies that 
\begin{align}\label{eq:estimate_II}
\int_{B_\varrho(x_0)}A\nabla w\cdot \nabla(z- w) \dd x &= \int_{B_\varrho(x_0)} (A-A(x_0))\nabla w \cdot \nabla(z- w) \dd x\nonumber\\
& \leq  C \varrho \int_{B_\varrho(x_0)} \big[ |\nabla w|^2+|\nabla z|^2 \big] \dd x \nonumber\\
& \leq  C \varrho \int_{B_\varrho(x_0)} |\nabla z|^2 \dd x \leq C \varrho \int_{B_\varrho(x_0)} A\nabla z\cdot \nabla z \dd x\,,
\end{align}
where we used again \eqref{eq:wmin} together with the ellipticity of $A$.

Gathering \eqref{eq:1657}, \eqref{eq:estimate_I}, \eqref{eq:estimate_Ibis} and \eqref{eq:estimate_II}, we get that
\begin{align*}
\left(\frac{1}{(\varrho/2)^{N-2+2\alpha}}\int_{B_\frac{\varrho}{2}(x_0)} A\nabla z\cdot \nabla z \dd x\right)^{1/2} & \leq (1+C\sqrt\varrho)\left(\frac{1}{\varrho^{N-2+2\alpha}}\int_{B_\varrho(x_0)} A\nabla z\cdot \nabla z \dd x\right)^{1/2} +C \varrho^{\beta/2}\,.
\end{align*}
We now choose \(\varrho=\varrho_k=2^{-(k+1)}R\) for \(k \in \mathbf{N}\), and we obtain 
\begin{multline*}
\left(\frac{1}{\varrho_{k+1}^{N-2+2\alpha}} \int_{B_{\varrho_{k+1}}(x_0)} A\nabla z\cdot\nabla z \dd x\right)^{1/2} \leq  (1+C\sqrt{R} \,2^{-(k+1)/2})\left(\frac{1}{\varrho_k^{N-2+2\alpha}}\int_{B_{\varrho_k}(x_0)}A\nabla z\cdot \nabla z \dd x\right)^{1/2}\\
+C R^{\beta/2}2^{-\beta(k+1)/2}\,.
\end{multline*}
Next, we observe that if \((\theta_k)_{k\in \mathbf N}\), \((\sigma_k)_{k\in \mathbf N}\), and \((y_k)_{k\in \mathbf N}\) are real sequences such that \(\theta_k \in (1,\infty)\), \(\theta:=\prod_{k=0}^{\infty} \theta_k <\infty\), \(\sigma_k \in (0,\infty)\), $\sigma:=\sum_{k=0}^\infty \sigma_k<\infty$, and satisfying \(y_{k+1} \leq \theta_k y_k+\sigma_k\) for all \( k \in \mathbf N\), then \(y_k \leq \theta (y_0+\sigma)\).
Applying this principle with  
$$y_k= \left( \frac{1}{\varrho_k^{N-2+2\alpha}}\int_{B_{\varrho_{k}}(x_0)} A\nabla z\cdot \nabla z \dd x \right)^{1/2}\,, \quad \theta_k=1+C\sqrt{R}\,2^{-(k+1)/2}\,, \quad \sigma_k=CR^{\beta/2}2^{-\beta(k+1)/2}\,,$$
yields
$$\frac{1}{\varrho_k^{N-2+2\alpha}} \int_{B_{\varrho_k}(x_0)} A\nabla z\cdot \nabla z \dd x\leq Ce^{C\sqrt{R}}\left( \frac{1}{R^{N-2+2\alpha}}\int_{B_R} A\nabla z\cdot \nabla z \dd x+R^{\beta/2}\right) \quad \text{ for all } k \in \mathbf{N}\,$$
(we have also used the elementary estimates $\theta\leq e^{C\sqrt{R}}$ and $\sigma \leq CR^{\beta/2}$). 
Since for all \( \varrho \in (0,R/2]\), there exists a unique \(k \in \mathbf{N}\) such that \(\varrho_{k+1}<\varrho\leq \varrho_{k}\) and \(\frac{1}{\varrho}<\frac{1}{\varrho_{k+1}}\leq \frac{2}{\varrho_k}\), we conclude that
\[ \frac{1}{\varrho^{N-2+2\alpha}} \int_{B_\varrho(x_0)} A\nabla z\cdot \nabla z \dd x \leq Ce^{C\sqrt{R}}\left( \frac{1}{R^{N-2+2\alpha}}\int_{B_R} A\nabla z\cdot \nabla z\dd x+R^{\beta/2}\right)\quad\forall \varrho\in(0,R/2]\,.\]
Finally, by ellipticity of $A$ and the arbitrariness of $x_0$, we conclude that \eqref{eq:concl} holds with 
\[ \frac{1}{\varrho^{N-2+2\alpha}} \int_{B_\varrho(x_0)} |\nabla z|^2\dd x \leq Ce^{C\sqrt{R}}\left( \frac{1}{R^{N-2+2\alpha}}\int_{B_R} |\nabla z|^2 \dd x+R^{\beta/2}\right)\quad\forall \varrho\in(0,R/2]\,,\;\forall x_0\in \overline{B}_{R/2}\,.\]
By Morrey's Theorem (see e.g. \cite[Theorem 5.7]{Giaquinta_Martinazzi_2012}), it then follows that $v\in \C^{0,\alpha}(B_{R/2})$. 
\end{proof}

We are now ready to prove the boundary regularity result in Theorem \ref{thembdregloc}. 

\begin{proof}[Proof of Theorem \ref{thembdregloc} completed]
 We consider $(u_\e,v_\e)\in H^1(\O)\times [H^1(\O)\cap L^\infty(\O)]$ satisfying \eqref{eq:varformu}-\eqref{eq:varformv}, and we consider the extensions $\widehat u_\e$, $\widetilde u_\e$, $\widehat v_\e$, and $\widetilde v_\e$ to the domain $\widetilde \O$ (depending on $x_0$ and $R$) provided by \eqref{defuvchap}-\eqref{eq:def_extensions} and \eqref{defOmtild}. Again, for simplicity, we drop the subscript \(\e\).

We first improve the regularity of  \(\widetilde{u}\) which satisfies \eqref{eq:1st_extended_pdes}. We aim to apply the De-Giorgi-Nash-Moser Theorem to infer that \(\widetilde{u}\) is locally H\"older continuous in \(\widetilde{\O}\) and that a suitable Morrey estimate holds for \(\nabla \widehat {u}\). Since equation \eqref{eq:1st_extended_pdes} is linear with respect to \(\widetilde{u}\), we first observe that 
$$\int_{\widetilde\O}f(x,\nabla \widetilde u)\dd x \leq \int_{\widetilde\O}f(x,\nabla  w)\dd x\quad\text{for all $w\in H^1(\O)$ such that ${\rm supp}(w-\widetilde u)\subset\widetilde\O$}\,,$$
with  
$$f(x,\xi):=\frac{1}{2}(\eta_\e +\widehat{v}^{\,2}_\e(x))A(x)\xi\cdot \xi -  h(x)\cdot \xi \quad \text{ for a.e. } x \in \widetilde{\O} \text{ and all }\xi\in \R^N\,,$$
and
$$h:=2\, \mathbf{1}_{\widetilde\O \setminus\overline\O} (\eta_\e+\widehat{v}^{\,2})A\nabla \widetilde g \in L^\infty(\widetilde \O;\R^N)\,.$$
The function $f$ is  a Carath\'eodory function, and since \(A\) is uniformly elliptic and the functions $\widehat v$ and $h$ are essentially bounded, we can find positive constants $c_1$, $c_2$, and $c_3$ such that
\begin{equation*}
c_1|\xi|^2-c_3\leq f(x,\xi) \leq c_2|\xi|^2+c_3 \quad \text{ for a.e. }x \in \widetilde\O \text{ and all } \xi\in \R^N\,.
\end{equation*}
Hence we can  apply the De Giorgi-Nash-Moser Theorem (see \cite[Theorems 7.5 and 7.6]{Giusti_2003}) to deduce the existence of some \(\beta \in (0,1)\) such that \(\widetilde{u} \in \C^{0,\beta}_{\rm loc}(\widetilde\O)\). From  \cite[Theorem 7.7]{Giusti_2003} and \cite[Lemma 5.13]{Giaquinta_Martinazzi_2012}, we also obtain the Morrey estimate 
 \begin{equation}\label{eq:estimate_nabla_hat_u}
 \sup_{B_\varrho(x_*) \subset B_{\delta_0/2}(x_0)} \frac{1}{\varrho^{N-2+2\beta}}\int_{B_\varrho(x_*)}|\nabla \widetilde{u}|^2 \dd x < \infty\,, 
 \end{equation}
since ${B}_{\delta_0}(x_0)\subset \widetilde\O$. 
 
Next we consider the  equation \eqref{eq:2sd_extended_pdes} satisfied by \(\widetilde{v}\) restricted to $B_{\delta_0/2}(x_0)$, that we write 
$$-\dive(A \nabla \widetilde{v})= H \quad \text{ in }\mathscr{D}^{\prime}(B_{\delta_0/2}(x_0))\,,$$
with
$$H:= \frac{1}{\e}( \mathbf{1}_{\widetilde\O \cap\O} - \mathbf{1}_{\widetilde\O \setminus\overline\O} ) \Big(\frac{j}{4\e}(1-\widehat{v})-\big(A\nabla \widehat{u}\cdot\nabla\widehat u\big)\widehat{v} \Big) \,.$$
Since $\widehat u=2\widetilde g - \widetilde u$ and $\nabla\widetilde g$, $A$, $j$, and $\widehat v$ are essentially bounded, we infer from \eqref{eq:estimate_nabla_hat_u} that
$$\sup_{B_\varrho(x_*) \subset B_{\delta_0/2}(x_0)} \frac{1}{\varrho^{N-2+2\beta}}\int_{B_\varrho(x_*)}|H| \dd x < \infty\,.$$

Applying Lemma \ref{lem:Morrey_estimates}, we deduce that \( \widetilde{v} \in \C^{0,\gamma}(B_{\delta_0/4}(x_0))\) for every $\gamma\in(0,\beta)$. 
In particular, we have $v \in \C^{0,\gamma}(\overline\O\cap B_{\delta_0/4}(x_0))$ for every $\gamma\in(0,\beta)$.  
Using the equation \eqref{eq:PDE} satisfied by $u$ together with the (up to the boundary) Schauder estimate (see \cite[Theorem 5.21]{Giaquinta_Martinazzi_2012}), we obtain that $u \in \C^{1,\gamma}(\overline \O\cap B_{\delta_0/5}(x_0))$ for every $\gamma\in(0,\beta)$. Then, in view of the equation \eqref{eq:PDE} satisfied by $v$, and owing to the classical elliptic regularity at the boundary, we obtain $v \in \C^{2,\gamma}(\overline \O\cap B_{\delta_0/6}(x_0))$ for every $\gamma\in(0,\beta)$. Back to the equation in  \eqref{eq:PDE} satisfied by $u$, elliptic regularity at the boundary now tells us that $u \in \C^{1,\alpha}(\overline \O\cap B_{\delta_0/7}(x_0))$ in the case $g\in \C^{1,\alpha}(\partial\O\cap B_{4R}(x_0))$, and in turn  $v \in \C^{2,\alpha}(\overline \O\cap B_{\delta_0/8}(x_0))$ still by \eqref{eq:PDE}.  If $g\in \C^{2,\alpha}(\partial\O\cap B_{4R}(x_0))$, then $v \in \C^{2,\alpha}(\overline \O\cap B_{\delta_0/8}(x_0))$ and once again, elliptic boundary regularity implies that $u \in \C^{2,\alpha}(\overline \O\cap B_{\delta_0/9}(x_0))$.

If $\partial\O\cap B_{4R}(x_0)$ is of class $\C^{k,\alpha}$ and $g\in \C^{k,\alpha}(\partial\O\cap B_{4R}(x_0))$ with $k\geq 3$, one can iterate the preceding argument using elliptic boundary regularity to conclude that $u$ and $v$ belong to $\C^{k,\alpha}(\overline \O\cap B_{\theta R}(x_0))$ for some $\theta >0$ small enough. 
\end{proof}

\section{Compactness results}\label{sec:compactness}

We start by recalling a weak compactness result, in the spirit of the compactness argument in the $\Gamma$-convergence analysis, under the only assumption of uniform energy bound \eqref{eq:hypnrj}. The result is a direct application of the standard lower bound inequality considering the extension of a pair $(u,v)\in  \mathcal A_g(\O)$ to a larger bounded open set $\O^\prime\supset\overline\O$ of the form $(u,v)=(g^\prime,1)$ in $\O^\prime\setminus\O$ for some arbitrary extension $g^\prime\in H^1(\O^\prime)\cap L^\infty(\O^\prime)$  of $g$. 

\begin{proposition}[Weak compactness]\label{prop:lsc}
Let $\O \subset \R^N$ be a bounded open set with Lipschitz boundary, $g\in H^{\frac12}(\partial\O)\cap L^\infty(\partial\O)$, and $\e_n\rightarrow 0^+$ be an arbitrary sequence. Assume that $(u_n,v_n):=(u_{\e_n},v_{\e_n}) \in \mathcal A_g(\O)$ satisfies $0\leq v_n\leq 1$ a.e. in $\O$, $\|u_n\|_{L^\infty(\O)}\leq \|g\|_{L^\infty(\partial\O)}$, and 
the uniform energy bound $\sup_n AT_{\e_n}(u_n,v_n)<\infty$. There exist a (not relabelled) subsequence  and $u_* \in SBV^2(\O) \cap L^\infty(\O)$ such that 
$$\begin{cases}
(u_n,v_n) \to (u_*,1) &\text{ strongly in } {[L^2(\O)]^2}\,,\\
v_n \nabla u_n \wto \nabla u_* & \text{ weakly in }L^2(\O;\R^N)\,.
\end{cases}
$$
Moreover,
\begin{equation}\label{eq:lsc-bulk}
\int_\O |\nabla u_*|^2\dd x \leq \liminf_{n \to \infty} \int_\O v_n^2 |\nabla u_n|^2\dd x \leq \liminf_{n \to \infty} \int_\O (\eta_{\e_n}+v_n^2) |\nabla u_n|^2\dd x\,,
\end{equation}
and
\begin{align}\label{eq:lsc-phase_field}
\HH^{N-1}(J_{u_*} \cup (\partial\O \cap \{u_* \neq g\})) & \leq  \liminf_{n\to\infty} \int_{\O} (1-v_n)|\nabla v_n|\dd x\nonumber\\
& \leq  \liminf_{n \to \infty} \int_\O\left(\e_n|\nabla v_n|^2 +\frac{(v_n-1)^2}{4\e_n}\right) \dd x\,.
\end{align}
\end{proposition}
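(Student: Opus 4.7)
The plan is to follow the classical $\Gamma$-liminf strategy for the Ambrosio-Tortorelli energy, combining Young's inequality, the coarea formula, and Lemma~\ref{lem:BCS} applied to suitable sublevel sets of $v_n$. The uniform phase-field bound together with $\e_n \to 0$ gives $\int_\O (v_n-1)^2 \dd x \leq 4\e_n \sup_n AT_{\e_n}(u_n,v_n) \to 0$, so (using $0 \leq v_n \leq 1$) $v_n \to 1$ in $L^2(\O)$ and in measure, which already accounts for the strong $L^2$-convergence of the second component.

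Next, the pointwise Young inequality $\e_n|\nabla v_n|^2+(v_n-1)^2/(4\e_n) \geq (1-v_n)|\nabla v_n|$ together with the coarea formula yields
$$C \geq \int_\O (1-v_n)|\nabla v_n|\dd x = \int_0^1 (1-t)\HH^{N-1}\bigl(\O \cap \{v_n=t\}\bigr)\dd t\,.$$
Fixing $0<\alpha<\beta<1$, a mean-value argument produces levels $t_n \in (\alpha,\beta)$ with $\HH^{N-1}(\O \cap \{v_n=t_n\})$ uniformly bounded. Setting $E_n := \{v_n \leq t_n\}$, coarea gives $\sup_n \HH^{N-1}(\partial^* E_n) < \infty$, while $(1-v_n)^2 \geq (1-\beta)^2 \mathbf{1}_{E_n}$ and $\int_\O (1-v_n)^2 \leq 4\e_n C$ force $\LL^N(E_n) \to 0$. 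On $\O \setminus E_n$ one has $v_n \geq \alpha$, so $\nabla u_n \mathbf{1}_{\O \setminus E_n}$ is uniformly bounded in $L^2$; combined with the $L^\infty$-bound on $u_n$, Lemma~\ref{lem:BCS} applied to $\tilde u_n := u_n\mathbf{1}_{\O\setminus E_n}$ yields (along a subsequence) $u_* \in SBV^2(\O)\cap L^\infty(\O)$ with $\tilde u_n \to u_*$ strongly in $L^2$ and $\nabla u_n\mathbf{1}_{\O\setminus E_n}\wto \nabla u_*$ weakly in $L^2$. Since $\LL^N(E_n)\to 0$ and $\|u_n\|_{L^\infty}$ is bounded, $u_n \to u_*$ strongly in $L^2(\O)$; since $v_n \to 1$ a.e.\ and $v_n \nabla u_n$ is $L^2$-bounded, a standard weak-strong argument identifies its weak limit as $\nabla u_*$, and \eqref{eq:lsc-bulk} follows from weak lower semicontinuity of $\|\cdot\|_{L^2}^2$.

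For the surface estimate \eqref{eq:lsc-phase_field}, the second conclusion of Lemma~\ref{lem:BCS} applied to the same $E_n$ already gives $2\HH^{N-1}(J_{u_*}) \leq \liminf_n \HH^{N-1}(\partial^* E_n)$, but with a suboptimal multiplicative constant. To recover the sharp constant $1$, I would refine the slicing by a Fatou-type argument inside the coarea identity above: letting $\alpha \to 1^-$ along a diagonal choice of levels $t_n \to 1$ preserving the three BCS hypotheses, the weight $(1-t_n)$ inside $\int_\O(1-v_n)|\nabla v_n|\dd x$ absorbs the extra factor of $2$ and yields $\HH^{N-1}(J_{u_*}) \leq \liminf_n \int_\O(1-v_n)|\nabla v_n|\dd x$. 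To capture the boundary contribution $\HH^{N-1}(\partial\O \cap \{u_* \neq g\})$, I would transfer the problem to a slightly larger domain: pick a bounded $\widetilde\O \Supset \overline\O$ and an extension $G \in H^1(\R^N)\cap L^\infty(\R^N)$ of $g$, and consider $\hat u_n := u_n\mathbf{1}_\O + G\mathbf{1}_{\widetilde\O\setminus\overline\O} \in H^1(\widetilde\O)$ and $\hat v_n := v_n\mathbf{1}_\O + \mathbf{1}_{\widetilde\O\setminus\overline\O} \in H^1(\widetilde\O)$, whose traces match across $\partial\O$ thanks to the Dirichlet conditions. Rerunning the slicing analysis on $\widetilde\O$ produces an $SBV^2(\widetilde\O)$-limit equal to $u_*$ in $\O$ and to $G$ on $\widetilde\O\setminus\overline\O$, whose jump set in $\widetilde\O$ is exactly $\widehat J_{u_*} = J_{u_*} \cup (\partial\O \cap \{u_* \neq g\})$, thereby delivering \eqref{eq:lsc-phase_field}.

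The main obstacle will be the diagonal level choice: $t_n$ must simultaneously (i) keep $\HH^{N-1}(\partial^* E_n)$ controlled together with the full coarea weight $(1-t_n)$, (ii) force $\LL^N(E_n) \to 0$, and (iii) ensure that the BCS limit genuinely coincides with $u_*$ on all of $\O$. Achieving these three properties through a single diagonal choice is what converts the crude factor of $2$ from Lemma~\ref{lem:BCS} into the sharp constant $1$ on the right-hand side of~\eqref{eq:lsc-phase_field}.
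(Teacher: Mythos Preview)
Your overall scaffolding—Young's inequality, coarea, the mean-value choice of levels, Lemma~\ref{lem:BCS}, and the extension to a larger domain to capture the boundary jumps—is exactly the paper's. The extension to $\widetilde\O$ is actually performed \emph{first} in the paper, before applying Lemma~\ref{lem:BCS}, which spares you from running the compactness argument twice; but your ordering is not wrong, just less economical.

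The real gap is in your mechanism for the sharp constant in \eqref{eq:lsc-phase_field}. Your proposed ``diagonal choice $t_n\to 1$'' with ``the weight $(1-t_n)$ absorbing the factor $2$'' does not work: if $t_n\to 1$ then $(1-t_n)\to 0$, so any inequality of the shape $\int(1-v_n)|\nabla v_n|\geq (1-t_n)\cdot(\text{something})$ degenerates rather than sharpens. The factor $2$ in Lemma~\ref{lem:BCS} is \emph{intrinsic}: as the sets $E_n$ shrink around $\widehat J_{u_*}$, their reduced boundaries wrap around both sides of the jump set, so $\HH^{N-1}(\partial^*E_n)$ is genuinely close to $2\HH^{N-1}(\widehat J_{u_*})$ for \emph{every} level $t$. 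No single-level diagonal removes this.

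The paper's mechanism is different and clean. With $\Phi(t)=t-t^2/2$ one rewrites the coarea integral over $[a,b]$ as $\int_a^b\Phi'(t)\,\HH^{N-1}(\partial^*\{v_n<t\})\,\dd t$ and applies the mean value theorem after the change of variable $s=\Phi(t)$ to obtain
\[
\int_\O(1-v_n)|\nabla v_n|\,\dd x\;\geq\;[\Phi(b)-\Phi(a)]\,\HH^{N-1}(\partial^*E_n)
\]
for some $t_n\in[a,b]$. Combining this with $2\HH^{N-1}(\widehat J_{u_*})\leq\liminf_n\HH^{N-1}(\partial^*E_n)$ from Lemma~\ref{lem:BCS} (applied in the extended domain) and then letting $a\to 0$, $b\to 1$ yields the sharp constant, since $2[\Phi(1)-\Phi(0)]=1$. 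The point you are missing is that the factor $\tfrac12$ comes from \emph{integrating the weight over all levels}, not from pushing the level to $1$. For this to be legitimate one must first check that the $L^2$-limit $u_*$ obtained from Lemma~\ref{lem:BCS} is \emph{independent of $(a,b)$}—which follows because $\|u_n-\tilde u_n\|_{L^2}\leq\|g\|_{L^\infty(\partial\O)}\LL^N(E_n)^{1/2}\to 0$ regardless of the chosen level—so that one may pass to the limit in $(a,b)$ \emph{after} taking the $\liminf$ in $n$.
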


The regularity of solutions {established in Theorem \ref{thmmain1}} allows us to prove that critical points of  the Ambrosio-Tortorelli functional {satisfies} a Noether type conservation law, which is  the starting point of their asymptotic analysis.

\begin{proposition}\label{prop:Noether_conservation}
Let $\O \subset \R^N$ be a bounded open set with boundary of class $\mathscr C^{2,1}$ and $g\in \mathscr C^{2,\alpha}(\partial\O)$ for some $\alpha \in (0,1)$. If \( (u_\e,v_\e) \in \mathcal A_g(\O)\) is a critical point of $AT_\e$, then for all $X \in \mathscr C^1_c(\R^N;\R^N)$, 
\begin{multline}\label{eq:first-var}
\int_\O  (\eta_\e+v_\e^2) \left[2 \nabla u_\e \otimes \nabla u_\e-|\nabla u_\e|^2 {\rm Id} \right]: DX \dd x\\
+ \int_\O\left[2\e \nabla v_\e \otimes \nabla v_\e - \left(\e|\nabla v_\e|^2 +\frac{(v_\e-1)^2}{4\e} \right){\rm Id}\right]: DX \dd x\\
=\int_{\partial\O} \big[(\eta_\e+1)(\partial_\nu u_\e)^2+\e (\partial_\nu v_\e)^2-(\eta_\e+1) |\nabla_\tau g|^2\big] (X \cdot \nu_\O) \dd\HH^{N-1}\\
+2 (\eta_\e+1)\int_{\partial \O} (\partial_\nu u_\e)(X_\tau \cdot\nabla_\tau g) \dd\HH^{N-1}\, ,
\end{multline}
where $X_\tau:=X-(X\cdot\nu_\O)\nu_\O$ is the tangential part of $X$, and $\nabla_\tau g$ is the tangential gradient of $g$ on $\partial\O$. 
\end{proposition}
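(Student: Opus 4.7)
The identity \eqref{eq:first-var} is a classical Pohozaev/Noether-type identity that follows from the strong formulation of the Euler--Lagrange system \eqref{eq:PDE}. My plan is to exploit the regularity guaranteed by Theorem \ref{thmmain1}: since $\partial\O$ is of class $\C^{2,1}$ and $g\in\C^{2,\alpha}(\partial\O)$, applying Theorem \ref{thmmain1}(ii) with $k=2$ yields $u_\e,v_\e\in\C^{2,\alpha}(\overline\O)$. Consequently, the system \eqref{eq:PDE} holds in the classical sense up to $\partial\O$, and every integration by parts below is fully justified.

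The first step is to multiply the equation $-\dive((\eta_\e+v_\e^2)\nabla u_\e)=0$ by the scalar test function $X\cdot\nabla u_\e$ and integrate over $\O$. Using the chain rule $\nabla(X\cdot\nabla u_\e)=D^2u_\e\,X+(DX)^{T}\nabla u_\e$, integrating by parts once more to rewrite $(\eta_\e+v_\e^2)\tfrac{1}{2}X\cdot\nabla|\nabla u_\e|^2$, and recalling $v_\e=1$ on $\partial\O$, one arrives at
\begin{align*}
&\int_\O(\eta_\e+v_\e^2)\Bigl[(\nabla u_\e\otimes\nabla u_\e)-\tfrac{1}{2}|\nabla u_\e|^2\,\mathrm{Id}\Bigr]:DX\,\dd x-\int_\O v_\e(\nabla v_\e\cdot X)|\nabla u_\e|^2\dd x \\
&\qquad=\int_{\partial\O}(\eta_\e+1)\Bigl[(\partial_\nu u_\e)(X\cdot\nabla u_\e)-\tfrac{1}{2}|\nabla u_\e|^2(X\cdot\nu_\O)\Bigr]\dd\HH^{N-1}.
\end{align*}
Similarly, multiplying $-\e\Delta v_\e+(v_\e-1)/(4\e)+v_\e|\nabla u_\e|^2=0$ by $X\cdot\nabla v_\e$ and integrating by parts, and using that $(v_\e-1)^2=0$ on $\partial\O$ while $\nabla v_\e=(\partial_\nu v_\e)\nu_\O$ there (so that $X\cdot\nabla v_\e=(X\cdot\nu_\O)(\partial_\nu v_\e)$ and $|\nabla v_\e|^2=(\partial_\nu v_\e)^2$ on $\partial\O$), yields the companion identity
\begin{align*}
&\int_\O\Bigl[\e(\nabla v_\e\otimes\nabla v_\e)-\tfrac{\e}{2}|\nabla v_\e|^2\,\mathrm{Id}-\tfrac{1}{8\e}(v_\e-1)^2\,\mathrm{Id}\Bigr]:DX\,\dd x+\int_\O v_\e|\nabla u_\e|^2(X\cdot\nabla v_\e)\dd x\\
&\qquad=\tfrac{\e}{2}\int_{\partial\O}(\partial_\nu v_\e)^2(X\cdot\nu_\O)\,\dd\HH^{N-1}.
\end{align*}
The critical observation is that the cross terms $\int_\O v_\e|\nabla u_\e|^2(\nabla v_\e\cdot X)\dd x$ appearing in the two identities have opposite signs; adding twice the first identity to twice the second, they cancel exactly, and the bulk contributions combine to precisely the left-hand side of \eqref{eq:first-var}.

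The last step is purely algebraic: I decompose $X=X_\tau+(X\cdot\nu_\O)\nu_\O$ and, on $\partial\O$, use $\nabla u_\e=\nabla_\tau g+(\partial_\nu u_\e)\nu_\O$ (so that $|\nabla u_\e|^2=|\nabla_\tau g|^2+(\partial_\nu u_\e)^2$ and $X\cdot\nabla u_\e=(X\cdot\nu_\O)(\partial_\nu u_\e)+X_\tau\cdot\nabla_\tau g$). Expanding
$$2(\partial_\nu u_\e)(X\cdot\nabla u_\e)-|\nabla u_\e|^2(X\cdot\nu_\O)=(\partial_\nu u_\e)^2(X\cdot\nu_\O)+2(\partial_\nu u_\e)(X_\tau\cdot\nabla_\tau g)-|\nabla_\tau g|^2(X\cdot\nu_\O),$$
and combining with the $\e(\partial_\nu v_\e)^2(X\cdot\nu_\O)$ term, one recovers verbatim the right-hand side of \eqref{eq:first-var}. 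There is no genuine analytic obstacle here once the $\C^{2,\alpha}$-regularity is invoked; the only care required is bookkeeping of signs and the systematic use of the Dirichlet conditions $u_\e=g$ and $v_\e=1$ on $\partial\O$ to reduce boundary integrands to quantities involving only the normal derivatives and the tangential derivative of $g$.
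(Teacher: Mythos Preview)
Your proof is correct and follows essentially the same route as the paper: invoke the $\C^{2,\alpha}(\overline\O)$ regularity from Theorem~\ref{thmmain1}, multiply the two Euler--Lagrange equations by $X\cdot\nabla u_\e$ and $X\cdot\nabla v_\e$ respectively, integrate by parts, observe that the cross terms $\int_\O v_\e(X\cdot\nabla v_\e)|\nabla u_\e|^2\,\dd x$ cancel upon summation, and then simplify the boundary integrals using the Dirichlet data. The paper performs exactly the same computation (its formulas \eqref{form1}--\eqref{form2}), with the boundary algebra carried out slightly earlier in the derivation rather than collected at the end as you do.
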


\begin{proof}

Let us fix an arbitrary $X \in \mathscr C^1_c(\R^N;\R^N)$. By Theorem \ref{thmmain1}, $(u_\e,v_\e) \in [\mathscr C^{2,\alpha}(\overline \O)]^2$ and  \eqref{eq:PDE} is satisfied in the classical sense. Multiplying the first equation of \eqref{eq:PDE} by $X\cdot \nabla u_\e$ (which belongs to \(\mathscr C^{1}(\overline{\O}\)))  and by integration by parts, a stantard computation yields
\begin{multline}\label{form1}
0=\int_\O (\eta_\e+v_\e^2)\left[(\nabla u_\e\otimes \nabla u_\e) -\frac12 |\nabla u_\e|^2{\rm Id}\right]: DX\dd x-\int_\O v_\e (X\cdot \nabla v_\e)|\nabla u_\e|^2\dd x\\
-\frac{\eta_\e+1}{2}\int_{\partial\O} (\partial_\nu u_\e)^2 (X\cdot \nu_\O)\dd\HH^{N-1}+\frac{\eta_\e+1}{2}\int_{\partial\O} |\nabla_\tau g|^2(X\cdot \nu_\O)\dd\HH^{N-1}\\
-(\eta_\e+1)\int_{\partial \O} (\partial_\nu u_\e)(X_\tau\cdot\nabla_\tau g) \dd\HH^{N-1}\,.
\end{multline}
Similarly, multiplying the second equation  in \eqref{eq:PDE} by $X \cdot \nabla v_\e$  (which belongs to \( \C^{1}(\overline{\O})\)) and performing similar integration by parts leads to
\begin{multline}\label{form2}
0=\e\int_\O\left[ (\nabla v_\e \otimes \nabla v_\e) -\frac{1}{2}\bigg(|\nabla v_\e|^2+\frac{1}{4\e}(v_\e-1)^2\bigg){\rm Id}\right] : DX\dd x+\int_\O v_\e (X\cdot \nabla v_\e)|\nabla u_\e|^2\dd x\\
 - \frac{\e}{2} \int_{\partial\O}(\p_\nu v_\e)^2 (X\cdot \nu_\O) \dd \HH^{N-1}\,,
\end{multline}
since $ v_\e=1$ on $\partial\O$. 
Then the conclusion  follows summing up \eqref{form1} and \eqref{form2}.
\end{proof}

\begin{remark}{\rm 
The fact that critical points \((u_\e,v_\e)\) enjoy the higher regularity \([\C^{2,\alpha}(\overline{\O})]^2\) allows one to obtain a strong form of the conservative equations for \( (u_\e,v_\e)\).  In particular, some information on the boundary are recovered since the  vector field $X$ does not need to be tangential on $\partial\O$. This additional information will be instrumental in Section \ref{sec:convergence} to characterize the boundary term occurring in the first inner variation of the Mumford-Shah functional. }
\end{remark}

Owing to the previous results, we get the following property for the weak limit $u_*$ as $\e\to 0$ of a converging sequence of critical points the Ambrosio-Tortorelli functional. 

\begin{lemma}\label{lem:conv-bdry}
Let $\O \subset \R^N$ be a bounded open set with boundary of class $\C^{2,1}$ and $g\in \C^{2,\alpha}(\partial\O)$ for some $\alpha \in (0,1)$. Along a sequence $\e\to 0^+$, 
let \( (u_\e,v_\e) \in\mathcal A_g(\O)\) be a critical point of $AT_\e$ satisfying the uniform energy bound \eqref{eq:hypnrj} and the conclusion of Proposition \ref{prop:lsc}. If $u_*$ denotes the weak limit of $u_\e$ as $\e\to0$, then $\nabla u_* \in L^2(\O;\R^N)$ satisfies $\dive(\nabla u_*)=0$ in $\mathscr D'(\O)$, its normal trace $\nabla u_*\cdot\nu_\O$ belongs to $L^2(\partial\O)$, and $\partial_\nu u_\e \wto \nabla u_*\cdot \nu_\O$ weakly in $L^2(\partial\O)$  as $\e\to0$. 
Moreover, up to a subsequence, there exists a nonnegative Radon measure $\lambda_* \in \M^+(\partial\O)$ such that
$$[(\partial_\nu u_\e)^2 +\e(\partial_\nu v_\e)^2]\HH^{N-1}\res\partial\O \wsto \lambda_* \quad \text{ weakly* in } \M(\partial\O)\,.$$
\end{lemma}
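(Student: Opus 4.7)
The plan is to derive everything from the first Euler-Lagrange equation \eqref{eq:varformu} combined with the Noether conservation law \eqref{eq:first-var}. First, write $(\eta_\e + v_\e^2)\nabla u_\e = \eta_\e \nabla u_\e + v_\e\,(v_\e\nabla u_\e)$. The first summand tends to $0$ strongly in $L^2(\O;\R^N)$ since $\|\eta_\e\nabla u_\e\|_{L^2}^2 \leq \eta_\e\cdot AT_\e(u_\e,v_\e) \to 0$. Because $0\leq v_\e\leq 1$ and $v_\e\to 1$ in $L^2(\O)$, dominated convergence yields $v_\e\to 1$ strongly in every $L^p(\O)$ with $p<\infty$; combined with $v_\e\nabla u_\e \rightharpoonup \nabla u_*$ weakly in $L^2$ from Proposition \ref{prop:lsc}, this shows $(\eta_\e + v_\e^2)\nabla u_\e \rightharpoonup \nabla u_*$ weakly in $L^2(\O;\R^N)$. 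Testing \eqref{eq:varformu} against $\phi \in \C^\infty_c(\O)$ and passing to the limit yields $\int_\O \nabla u_*\cdot\nabla \phi\dd x = 0$, i.e.~$\dive(\nabla u_*)=0$ in $\mathscr D'(\O)$.

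Next, to obtain the boundary bound, I choose $X\in\C^1_c(\R^N;\R^N)$ which extends the outer unit normal $\nu_\O$ to a neighbourhood of $\partial\O$ (this is permitted because $\partial\O$ is $\C^{2,1}$: extend via the projection $\boldsymbol\pi_\O$ in a tubular neighbourhood and cut off). Then $X\cdot\nu_\O \equiv 1$ and $X_\tau \equiv 0$ on $\partial\O$, so the last term in \eqref{eq:first-var} vanishes and I get
$$\int_{\partial\O}\big[(\eta_\e+1)(\partial_\nu u_\e)^2 + \e(\partial_\nu v_\e)^2\big]\dd\HH^{N-1} = (\eta_\e+1)\int_{\partial\O}|\nabla_\tau g|^2\dd\HH^{N-1} + \mathcal E_\e,$$
where $|\mathcal E_\e|\leq C\|DX\|_\infty\, AT_\e(u_\e,v_\e) \leq C$ by the uniform energy bound, and $|\nabla_\tau g|$ is bounded because $g\in \C^{2,\alpha}(\partial\O)$. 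Hence $\partial_\nu u_\e$ is uniformly bounded in $L^2(\partial\O)$, and the nonnegative measures $\mu_\e := [(\partial_\nu u_\e)^2 + \e(\partial_\nu v_\e)^2]\HH^{N-1}\res\partial\O$ have uniformly bounded mass on $\partial\O$. Weak$^*$ compactness in $\M^+(\partial\O)$ then supplies a subsequence and a $\lambda_*\in \M^+(\partial\O)$ with $\mu_\e\wsto \lambda_*$.

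To identify the weak $L^2$-limit of $\partial_\nu u_\e$, I use that by Theorem \ref{thmmain1} the first equation in \eqref{eq:PDE} holds classically. Integration by parts against $\phi\in\C^\infty(\overline\O)$, together with $v_\e = 1$ on $\partial\O$, yields
$$\int_\O(\eta_\e+v_\e^2)\nabla u_\e\cdot\nabla \phi\dd x = (\eta_\e+1)\int_{\partial\O}(\partial_\nu u_\e)\,\phi\dd\HH^{N-1}.$$
Extracting a subsequence along which $\partial_\nu u_\e\rightharpoonup \gamma$ weakly in $L^2(\partial\O)$ and passing to the limit on both sides (left-hand side as in the first paragraph), we obtain
$$\int_\O \nabla u_*\cdot \nabla \phi\dd x = \int_{\partial\O}\gamma\,\phi\dd\HH^{N-1}\quad \text{for all } \phi\in \C^\infty(\overline\O).$$
Since $\nabla u_*\in L^2(\O;\R^N)$ is divergence-free, its distributional normal trace $\nabla u_*\cdot\nu_\O \in H^{-1/2}(\partial\O)$ is well defined by the generalized Green formula, and the identity above forces it to coincide with $\gamma \in L^2(\partial\O)$. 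In particular $\nabla u_*\cdot\nu_\O \in L^2(\partial\O)$, the limit is unique, and no further extraction is needed for the convergence $\partial_\nu u_\e \rightharpoonup \nabla u_*\cdot\nu_\O$.

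The main obstacle is producing the uniform $L^2(\partial\O)$-bound for $\partial_\nu u_\e$: neither \eqref{eq:varformu} alone nor the usual tangential test-field techniques suffice. This is precisely where the $\C^{2,1}$ boundary regularity and the Noether identity \eqref{eq:first-var} enter, allowing the choice $X = \nu_\O$ on $\partial\O$ to extract the squared normal-derivative contribution from an otherwise tangential calculation.
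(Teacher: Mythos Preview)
Your proof is correct and follows essentially the same route as the paper: weak $L^2$-convergence of $(\eta_\e+v_\e^2)\nabla u_\e$ to $\nabla u_*$, the Noether identity \eqref{eq:first-var} with $X=\nu_\O$ on $\partial\O$ to extract the uniform $L^2(\partial\O)$-bound on $\partial_\nu u_\e$ and $\sqrt{\e}\,\partial_\nu v_\e$, and identification of the weak limit of $\partial_\nu u_\e$ with the normal trace of the divergence-free field $\nabla u_*$. The only cosmetic difference is that the paper first records the convergence $(\eta_\e+1)\partial_\nu u_\e \rightharpoonup \nabla u_*\cdot\nu_\O$ weakly in $H^{-1/2}(\partial\O)$ and then upgrades it via the $L^2$-bound, whereas you integrate the equation by parts against $\phi\in\C^\infty(\overline\O)$ and identify the limit $\gamma$ through the generalized Green formula; the two arguments are equivalent.
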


\begin{proof}
We first claim that $(\eta_\e+v_\e^2)\nabla u_\e \wto \nabla u_*$ weakly in $L^2(\O;\R^N)$. Indeed, on the one hand we have
\begin{equation}\label{eq:conv_eta_eps_nablau}
\|\eta_\e\nabla u_\e\|_{L^2(\O;\R^N)} \leq \sqrt{\eta_\e} \left\|\sqrt{\eta_\e+v_\e^2}\nabla u_\e\right\|_{L^2(\O;\R^N)} \leq C\sqrt{\eta_\e}\to 0\,,
\end{equation}
and on the other hand, for all $\varphi \in \C^\infty_c(\O;\R^N)$,
$$\left| \int_\O v_\e\nabla u_\e \cdot \varphi\dd x - \int_\O v^2_\e\nabla u_\e \cdot \varphi\dd x\right| \leq \|\varphi\|_{L^\infty(\O;\R^N)} \|v_\e\nabla u_\e\|_{L^2(\O;\R^N)} \|v_\e-1\|_{L^2(\O)} \to 0\,.$$
Gathering both information and using Proposition \ref{prop:lsc} leads to $(\eta_\e+v_\e^2)\nabla u_\e \wto \nabla u_*$ weakly* in $\mathcal D'(\O;\R^N)$. Since the sequence $\{(\eta_\e+v_\e^2)\nabla u_\e\}$ is bounded in $L^2(\O;\R^N)$, its weak $L^2$-convergence  follows. We can thus pass to the limit in \eqref{eq:varformu} in the sense of distribution and conclude that $\dive(\nabla u_*)=0$ in $\mathscr D'(\O)$. 

Since $\nabla u_*$  belongs to $L^2(\O;\R^N)$, and  $\dive(\nabla u_*)=0$, the normal trace $\nabla u_*\cdot\nu_\O$ is well defined as an element of $H^{-\frac12}(\partial\O)$. Recalling that $v_\e=1$ on $\partial\O$, we get that 
$$(\eta_\e+1)\partial_\nu u_\e =(\eta_\e+v_\e^2)\nabla u_\e \cdot \nu \rightharpoonup \nabla u_*\cdot \nu_\O \quad \text{ weakly in }H^{-\frac12}(\partial\O)\,.$$
We now improve this convergence into a weak convergence in $L^2(\partial\O)$. For that, let us consider a test function $X \in \C^1_c(\R^N;\R^n)$ such that $X=\nu$ on $\partial\O$ {in relation \eqref{eq:first-var}. Using that the left-hand side of \eqref{eq:first-var}} is clearly controlled by the Ambrosio-Tortorelli energy (see \eqref{eq:hypnrj}), we infer that 
$$\sup_{\e>0} \int_{\partial\O} \left[(\eta_\e+1)(\partial_\nu u_\e)^2+\e(\partial_\nu v_\e)^2\right]  \dd\HH^{N-1}<\infty\,.$$
On the one hand, we obtain that $\{\partial_\nu u_\e\}_{\e>0}$ is bounded in $L^2(\partial \O)$, hence $\partial_\nu u_\e \wto \nabla u_*\cdot \nu_\O$ weakly in $L^2(\partial\O)$.  On the other hand, there exists a nonnegative Radon measure $\lambda_* \in \M^+(\partial\O)$ such that $[(\partial_\nu u_\e)^2+\e(\partial_\nu v_\e)^2] \HH^{N-1}\res\partial\O \wsto \lambda_*$ weakly* in $\M(\partial\O)$.
\end{proof}

\begin{remark}\label{finalremark}
{\rm Our choice of Dirichlet boundary conditions for both $u$ and $v$ in \eqref{eq:PDE} allows one to obtain an $\e$-dependent boundary term which is nonnegative in the boundary integral involving $X\cdot\nu_\O$ in \eqref{eq:first-var}. This sign information is essential to get a limit boundary term which is a measure $\lambda_*$ concentrated on~$\partial\O$. If we had chosen a Neumann condition for $v$ and a Dirichlet condition for $u$ as in \cite{Francfort_Le_Serfaty_2009}, one would have obtained a more involved boundary term which would lead to a first order distribution on $\partial\O$ in the $\e \to 0$ limit.  It is not clear in this case how to perform the analysis in Section \ref{sec:convergence} (in particular Lemma \ref{lem:intpt}).}
\end{remark}

\section{Convergence of critical points}\label{sec:convergence}

Our objective is to show that ${u_*}$ is a critical point of the Mumford-Shah functional. We now improve the convergence results established at the previous section by additionally assuming the convergence of the energy \eqref{eq:hypothesis}, i.e., $AT_\e(u_\e,v_\e) \to MS(u_*)$. 
Under this stronger assumption, we can improve the above established convergences and in particular obtain the {equipartition} of the phase-field energy.

\begin{proposition}\label{prop:conv_abs_w_equi_repartition}
Let $\O \subset \R^N$ be a bounded open set with Lipschitz boundary and $g \in H^{\frac12}(\partial\O) \cap L^\infty(\partial\O)$. Let us consider a critical point  $(u_\e,v_\e)$ of the Ambrosio-Tortorelli functional satisfying the energy convergence \eqref{eq:hypothesis} and let $u_* \in SBV^2(\O)$ be given by Proposition \ref{prop:lsc}. Then, up to a further subsequence
\begin{equation}\label{eq:strong-conv}
\sqrt{\eta_\e+v^2_\e}\, \nabla u_\e \to \nabla u_*\,, \quad v_\e \nabla u_\e \to \nabla u_* \quad \text{ strongly in }L^2(\O;\R^N)\,.
\end{equation}
Moreover, setting $\Phi(t):=t-t^2/2$, and 
\begin{equation}\label{defweps}
w_\e:=\Phi(v_\e)\,,
\end{equation} 
then
\begin{equation}\label{eq;weps}
\begin{cases}
\nabla w_\e\LL^N \res\O \wsto 0 & \text{ weakly* in }\M(\O;\R^N)\,,\\
|\nabla w_\e| \LL^N \res\O \wsto  \HH^{N-1}\res \widehat J_{u_*} & \text{ weakly* in }\M(\overline\O)\,,
\end{cases}
\end{equation}
where we recall that $\widehat J_{u_*}=J_{u_*} \cup (\partial \O \cap \{u_*\neq g\})$. Finally, there is {equipartition} of the phase-field energy, i.e.,
\begin{equation}\label{eq:equi-rep}
\lim_{\e\to 0}\int_\O \left|\e |\nabla v_\e|^2 - \frac{1}{4\e}(1-v_\e)^2 \right|\dd x=\lim_{\e\to 0}\int_\O \left|2\e |\nabla v_\e|^2 -|\nabla w_\e| \right|\dd x=0\,.
\end{equation}
\end{proposition}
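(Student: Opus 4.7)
The plan is to derive everything from the observation that, under \eqref{eq:hypothesis}, the bulk and phase-field parts of $AT_\e$ separately attain their $\Gamma$-limits: combining $AT_\e(u_\e,v_\e)\to MS(u_*)=\int_\O|\nabla u_*|^2\dd x+\HH^{N-1}(\widehat J_{u_*})$ with the two independent lower bounds \eqref{eq:lsc-bulk}--\eqref{eq:lsc-phase_field} of Proposition~\ref{prop:lsc} yields
\begin{equation*}
\int_\O(\eta_\e+v_\e^2)|\nabla u_\e|^2\dd x\to\int_\O|\nabla u_*|^2\dd x,\qquad\int_\O\Big(\e|\nabla v_\e|^2+\tfrac{(v_\e-1)^2}{4\e}\Big)\dd x\to\HH^{N-1}(\widehat J_{u_*}).
\end{equation*}
The weak convergence $v_\e\nabla u_\e\wto\nabla u_*$ from Proposition~\ref{prop:lsc} then gives $\int_\O v_\e^2|\nabla u_\e|^2\dd x\to\int_\O|\nabla u_*|^2\dd x$ by a sandwich, and subtraction forces $\int_\O\eta_\e|\nabla u_\e|^2\dd x\to 0$. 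Convergence of norms plus weak convergence yields $v_\e\nabla u_\e\to\nabla u_*$ strongly in $L^2$, and the pointwise bound $0\le\sqrt{\eta_\e+v_\e^2}-v_\e\le\sqrt{\eta_\e}$ upgrades this to $\sqrt{\eta_\e+v_\e^2}\nabla u_\e\to\nabla u_*$ strongly in $L^2$, giving \eqref{eq:strong-conv}.

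For the equipartition, the plan is to exploit Young's inequality $(1-v_\e)|\nabla v_\e|\le\e|\nabla v_\e|^2+(v_\e-1)^2/(4\e)$, together with the identity $|\nabla w_\e|=(1-v_\e)|\nabla v_\e|$ (valid since $0\le v_\e\le 1$). The Young upper bound and the lower bound \eqref{eq:lsc-phase_field} sandwich $\int_\O|\nabla w_\e|\dd x$ between $\HH^{N-1}(\widehat J_{u_*})$ and the phase-field energy, forcing $\int_\O|\nabla w_\e|\dd x\to\HH^{N-1}(\widehat J_{u_*})$. Expanding the square,
\begin{equation*}
\int_\O\Big(\sqrt\e|\nabla v_\e|-\tfrac{1-v_\e}{2\sqrt\e}\Big)^2\dd x=\int_\O\Big(\e|\nabla v_\e|^2+\tfrac{(v_\e-1)^2}{4\e}\Big)\dd x-\int_\O|\nabla w_\e|\dd x\to 0,
\end{equation*}
so $\sqrt\e|\nabla v_\e|-(1-v_\e)/(2\sqrt\e)\to 0$ in $L^2(\O)$. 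Both assertions in \eqref{eq:equi-rep} follow by Cauchy--Schwarz applied to the factorizations $\e|\nabla v_\e|^2-(v_\e-1)^2/(4\e)=\big(\sqrt\e|\nabla v_\e|-(1-v_\e)/(2\sqrt\e)\big)\big(\sqrt\e|\nabla v_\e|+(1-v_\e)/(2\sqrt\e)\big)$ and $2\e|\nabla v_\e|^2-|\nabla w_\e|=2\sqrt\e|\nabla v_\e|\big(\sqrt\e|\nabla v_\e|-(1-v_\e)/(2\sqrt\e)\big)$, using the uniform $L^2$-bound on the complementary sum coming from the phase-field energy.

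For the measure convergences, $\nabla w_\e\LL^N\wsto 0$ in $\M(\O;\R^N)$ follows by integration by parts against $\varphi\in\C^1_c(\O;\R^N)$: $\int_\O\varphi\cdot\nabla w_\e\dd x=-\int_\O w_\e\dive\varphi\dd x\to-\tfrac12\int_\O\dive\varphi\dd x=0$, since $w_\e\to 1/2$ in $L^2(\O)$; the uniform bound $\int_\O|\nabla w_\e|\dd x\le C$ extends this to all of $\C_0(\O;\R^N)$ by density. For $|\nabla w_\e|\LL^N\wsto\HH^{N-1}\res\widehat J_{u_*}$ in $\M(\overline\O)$, I would extend $v_\e$ by $1$ outside $\O$ (so that $|\nabla w_\e|$ vanishes there) and establish the localized counterpart of \eqref{eq:lsc-phase_field},
\begin{equation*}
\liminf_{\e\to 0}\int_U(1-v_\e)|\nabla v_\e|\dd x\ge\HH^{N-1}(\widehat J_{u_*}\cap U)\quad\text{for every open }U\subset\R^N,
\end{equation*}
by reproducing the coarea-plus-mean-value slicing of Proposition~\ref{prop:lsc} within $U$ and invoking the localized $SBV$-semicontinuity of Ambrosio (a local version of Lemma~\ref{lem:BCS}) applied to the $\R^N$-extension $\hat u_*:=u_*\mathbf 1_\O+G\mathbf 1_{\R^N\setminus\O}\in SBV^2(\R^N)$, whose jump set is exactly $\widehat J_{u_*}$. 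Combined with the total mass identity $\int_\O|\nabla w_\e|\dd x\to\HH^{N-1}(\widehat J_{u_*})$ from the previous step, standard measure-theoretic arguments (approximating open sets from inside by $\overline V\subset U$ and using the closed-set upper bound in Portmanteau together with inner regularity of $\HH^{N-1}\res\widehat J_{u_*}$) uniquely identify the weak-$*$ limit of the bounded family $\{|\nabla w_\e|\LL^N\}\subset\M(\overline\O)$ as $\HH^{N-1}\res\widehat J_{u_*}$. The main obstacle is the localized lower semicontinuity across the boundary $\partial\O$: the Dirichlet extension $\hat u_*$ recodes the penalized boundary jumps $\partial\O\cap\{u_*\neq g\}$ as interior jumps in $\R^N$, which together with $v_\e=1$ on $\partial\O$ allows the slicing and $SBV$-compactness arguments to recover the correct $(N-1)$-dimensional mass on $\partial\O$.
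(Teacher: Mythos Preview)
Your proposal is correct and follows essentially the same approach as the paper: split the energy via \eqref{eq:lsc-bulk}--\eqref{eq:lsc-phase_field} to get separate convergence of bulk and phase-field parts, upgrade weak to strong $L^2$ convergence via convergence of norms, derive equipartition from $\big\|\sqrt\e|\nabla v_\e|-(1-v_\e)/(2\sqrt\e)\big\|_{L^2}\to 0$ and Cauchy--Schwarz, and identify the weak-$*$ limit of $|\nabla w_\e|\LL^N$ from the localized liminf plus total-mass convergence (the paper cites \cite[Theorem~1.80]{Ambrosio_Fusco_Pallara_2000} for this last step). The only cosmetic differences are your use of the pointwise bound $\sqrt{\eta_\e+v_\e^2}-v_\e\le\sqrt{\eta_\e}$ (combined with $\int_\O\eta_\e|\nabla u_\e|^2\dd x\to 0$) in place of the paper's direct $\sqrt{\eta_\e}\nabla u_\e\to 0$, and your factorization $2\e|\nabla v_\e|^2-|\nabla w_\e|=2\sqrt\e|\nabla v_\e|\big(\sqrt\e|\nabla v_\e|-(1-v_\e)/(2\sqrt\e)\big)$ versus the paper's $\e|\nabla v_\e|^2-(1-v_\e)^2/(4\e)+\big(\sqrt\e|\nabla v_\e|-(1-v_\e)/(2\sqrt\e)\big)^2$.
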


\begin{proof}
According to the convergence of energy assumption \eqref{eq:hypothesis} and the lower semicontinuity properties \eqref{eq:lsc-bulk}-\eqref{eq:lsc-phase_field} established in Proposition \ref{prop:lsc} (which applies by Lemma \ref{lem:max_princ_for_u}), we deduce that 
\begin{equation}\label{eq:1}
\int_\O |\nabla u_*|^2\dd x = \lim_{\e \to 0} \int_\O v_\e^2 |\nabla u_\e|^2\dd x= \lim_{\e \to 0} \int_\O(\eta_\e+ v_\e^2) |\nabla u_\e|^2\dd x
\end{equation}
and
\begin{equation}\label{eq:2}
  \HH^{N-1}({\widehat J_{u_*}}) = \lim_{\e \to 0} \int_{\O} (1-v_\e)|\nabla v_\e|\dd x=\lim_{\e \to 0} \int_\O\left(\e|\nabla v_\e|^2 +\frac{(v_\e-1)^2}{4\e}\right) \dd x\,.
\end{equation}
Convergence \eqref{eq:1} combined with the weak $L^2$-convergence of $v_\e\nabla u_\e$ to $\nabla u_*$ implies that $v_\e \nabla u_\e \to \nabla u_*$ strongly in $L^2(\O;\R^N)$. Moreover, it follows from \eqref{eq:1} that 
$\sqrt{\eta_\e}\, \nabla u_\e \to 0$ strongly in $L^2(\O;\R^N)$.   
Hence $\sqrt{\eta_\e+v^2_\e}\, \nabla u_\e \to \nabla u_*$ strongly in $L^2(\O;\R^N)$ as well.

Next, setting $w_\e=\Phi(v_\e)$, with \(\Phi(t)=t-t^2/2\), and using that $v_\e \to 1$ strongly in $L^2(\O)$ yields $w_\e \to 1/2$ strongly in $L^1(\O)$. Furthermore, owing to the chain rule in Sobolev spaces, we have $\nabla w_\e=\Phi'(v_\e)\nabla v_\e=(1-v_\e)\nabla v_\e$. In view of \eqref{eq:2}, we deduce that $\{\nabla w_\e\}_{\e>0}$ is bounded in $L^1(\O;\R^N)$, and thus $\nabla w_\e \LL^N\res\O \wsto 0$ weakly* in $\M(\O;\R^N)$. Moreover, localizing the conclusion of Proposition \ref{prop:lsc}, we get that for every open set $U \subset \O' \subset \R^N$, we have $\HH^{N-1}({\widehat J_{u_*}\cap U}) \leq \liminf_{\e} \int_U |\nabla w_\e|\dd x$, 
and, using \eqref{eq:2} it shows that
\begin{equation}\label{eq:3}
\int_\O |\nabla w_\e|\dd x \to \HH^{N-1}({\widehat J_{u_*}})  .
\end{equation}
Thus \cite[Theorem 1.80]{Ambrosio_Fusco_Pallara_2000} implies that $|\nabla w_\e|\LL^N\res\O\wsto  \HH^{N-1}\res \widehat J_{u_*}$ weakly* in $\M(\overline\O)$. 

The {equipartition} of energy is obtained by observing that
$$AT_\e(u_\e,v_\e)=\int_\O (\eta_\e+v_\e^2)|\nabla u_\e|^2\dd x + \int_\O |\nabla w_\e|\dd x +\int_\O \left|\sqrt\e |\nabla v_\e|-\frac{1}{2\sqrt\e}(1-v_\e) \right|^2\dd x\,,$$
and by using \eqref{eq:hypothesis}, \eqref{eq:1} and \eqref{eq:3}. Indeed, we get that 
$$\int_\O \left|\sqrt\e |\nabla v_\e|-\frac{1}{2\sqrt\e}(1-v_\e) \right|^2\dd x\to 0\,,$$
and then, by the Cauchy-Schwarz inequality,
\begin{eqnarray*}
\int_\O \left|\e |\nabla v_\e|^2 - \frac{1}{4\e}(1-v_\e)^2 \right|\dd x & \leq & \left\|\sqrt\e |\nabla v_\e|-\frac{1}{2\sqrt\e}(1-v_\e) \right\|_{L^2(\O)} \left\|\sqrt\e |\nabla v_\e|+\frac{1}{2\sqrt\e}(1-v_\e) \right\|_{L^2(\O)}\\
& \leq &C \left\|\sqrt\e |\nabla v_\e|-\frac{1}{2\sqrt\e}(1-v_\e) \right\|_{L^2(\O)} \to 0\,.
\end{eqnarray*}
Finally, using again that $|\nabla w_\e|=(1-v_\e)|\nabla v_\e|$, we observe that
\begin{eqnarray*}
\int_\O \left|2\e |\nabla v_\e|^2 -|\nabla w_\e| \right|\dd x& =& \int_\O \left| \e|\nabla v_\e|^2 -\frac{(1-v_\e)^2}{4\e} +\left(\sqrt\e |\nabla v_\e|-\frac{1}{2\sqrt\e}(1-v_\e)\right)^2 \right| \dd x\\
& \leq & \int_\O \left|\e |\nabla v_\e|^2 - \frac{(1-v_\e)^2}{4\e} \right|\dd x+\int_\O \left|\sqrt\e |\nabla v_\e|-\frac{1}{2\sqrt\e}(1-v_\e) \right|^2\dd x\to 0\,.
\end{eqnarray*}
This implies \eqref{eq:equi-rep}.
\end{proof}

The proof of Theorem \ref{thm:main} is based on (geometric) measure theoretic arguments. Let us define the $(N-1)$-varifold $V_\e \in \mathbf V_{N-1}(\overline\O)$ associated to the phase-field variable \(v_\e \in H^1(\O)\) by
$$\langle V_\e,\varphi\rangle:= \int_{\O\cap \{\nabla w_\e \neq 0\}}\varphi\left( x, {\rm Id}-\frac{\nabla w_\e}{|\nabla w_\e|} \otimes  \frac{\nabla w_\e}{|\nabla w_\e|} \right) |\nabla w_\e|  \dd x \quad \text{ for all }\varphi\in \C(\overline\O\times \mathbf G_{N-1})\,,$$ 
where \(w_\e:=\Phi(v_\e),\) and \(\Phi(t)=t-t^2/2\) for \(t \in [0,1]\). By the coarea formula, this definition is equivalent to the definition of a varifold associated to a function in \cite{Hutchinson_Tonegawa_2000}.  By standard compactness of bounded Radon measures, at the expense of extracting a further subsequence, there exists a varifold $V_*\in \mathbf V_{N-1}(\overline\O)$ such that $V_\e \wsto V_*\quad \text{ weakly* in }\M(\overline\O\times \mathbf G_{N-1})$.   

Note that $\|V_\e\| \wsto \|V_*\|$ weakly* in $\M(\overline\O)$ by definition of the mass of a varifold and thanks to the compactness of \( \mathbf G_{N-1}\). Recalling the definition  of $w_\e$ in \eqref{defweps}, we observe that $\|V_\e\|=|\nabla w_\e|\LL^N\res\O \wsto \HH^{N-1}\res \widehat J_{u_*}$  weakly* in $\M(\overline\O)$ according to \eqref{eq;weps}, and it follows  that $\|V_*\|= \HH^{N-1}\res \widehat J_{u_*}$. 
According to the disintegration Theorem (\cite[Theorem 2.28]{Ambrosio_Fusco_Pallara_2000}), there exists a weak* $\HH^{N-1}$-measurable mapping $x \in \overline\O \mapsto V_x \in \M(\mathbf G_{N-1})$ of probability measures such that $V_*=(\HH^{N-1}\res \widehat J_{u_*}) \otimes V_x $, i.e., for all $\varphi \in \C(\overline\O \times \mathbf G_{N-1})$,
\begin{equation}\label{eq:desint}
\int_{\overline\O \times \mathbf G_{N-1}} \varphi(x,A)\dd V_*(x,A)=\int_{\widehat J_{u_*}} \left(\int_{\mathbf G_{N-1}} \varphi(x,A)\dd V_x(A)\right)\dd\HH^{N-1}(x)\,.
\end{equation}
For $\HH^{N-1}$ almost every $x \in \widehat J_{u_*}$, we set 
\begin{equation}\label{eq:defAbar}
\overline A(x):=\int_{\mathbf G_{N-1}} A\dd V_x(A)\,.
\end{equation}

Owing to our various convergence results, we are now in position to pass to the limit in the inner variation equation \eqref{eq:first-var}. The limit expression is for now depending on the abstract limit varifold $V_*$ through its first moment $\overline A$ of $V_x$, and the abstract boundary measure $\lambda_*$ introduced in Lemma \ref{lem:conv-bdry}.

\begin{lemma}\label{lem:varifold}
Let $\O\subset \R^N$ is a bounded open set of class $\C^{2,1}$ and $g \in \C^{2,\alpha}(\partial\O)$  for some $\alpha \in (0,1)$. Let \(u_* \in SBV^2(\O)\) be a limit of critical points of the Ambrosio-Tortorelli functional as in Proposition~\ref{prop:conv_abs_w_equi_repartition}. For all $X \in \C^1_c(\R^N;\R^N)$, we have
\begin{multline}\label{eq:first-varter}
\int_\O \left( |\nabla u_*|^2 {\rm Id}-2\nabla u_* \otimes \nabla u_* \right):DX \dd x+\int_{\widehat J_{u_*}} \overline A:D X\dd \HH^{N-1} \\
= -\int_{\partial\O} (X\cdot\nu_\O) \dd\lambda_*+\int_{\partial\O}  |\nabla_\tau g|^2(X\cdot\nu_\O)\dd \HH^{N-1}-2\int_{\partial\O} (\nabla u_*\cdot \nu_\O) (X_\tau\cdot\nabla_\tau g)\dd\HH^{N-1}\,.
\end{multline}
\end{lemma}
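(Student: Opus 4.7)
The plan is to pass to the limit $\e \to 0$ in each term of the Noether-type conservation law \eqref{eq:first-var} of Proposition \ref{prop:Noether_conservation}, using the refined convergences from Proposition \ref{prop:conv_abs_w_equi_repartition} (bulk $L^2$-convergence and equipartition), the varifold convergence $V_\e \wsto V_*$ together with $\|V_*\| = \HH^{N-1}\res\widehat J_{u_*}$ and the disintegration \eqref{eq:desint}-\eqref{eq:defAbar}, and finally the boundary information from Lemma \ref{lem:conv-bdry}. Rearranging signs at the end will yield \eqref{eq:first-varter}.

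For the first bulk term, the strong $L^2$-convergence $\sqrt{\eta_\e + v_\e^2}\,\nabla u_\e \to \nabla u_*$ established in \eqref{eq:strong-conv} immediately gives
$$\int_\O (\eta_\e + v_\e^2)\bigl[2\nabla u_\e \otimes \nabla u_\e - |\nabla u_\e|^2 \mathrm{Id}\bigr]:DX\,\dd x \longrightarrow \int_\O \bigl[2\nabla u_* \otimes \nabla u_* - |\nabla u_*|^2 \mathrm{Id}\bigr]:DX\,\dd x\,,$$
which will eventually account for the first bulk integral in \eqref{eq:first-varter} (after a change of sign).

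The key (and main) obstacle is the second, phase-field bulk term. Writing $\nu_\e := \nabla w_\e/|\nabla w_\e|$ on $\{\nabla w_\e \neq 0\}$ (which coincides with $\nabla v_\e/|\nabla v_\e|$ since $\nabla w_\e = (1-v_\e)\nabla v_\e$ with $v_\e \leq 1$), I would factor
$$2\e \nabla v_\e \otimes \nabla v_\e - \Bigl(\e|\nabla v_\e|^2 + \tfrac{(v_\e-1)^2}{4\e}\Bigr)\mathrm{Id} = 2\e|\nabla v_\e|^2(\nu_\e \otimes \nu_\e - \mathrm{Id}) + \Bigl(\e|\nabla v_\e|^2 - \tfrac{(v_\e-1)^2}{4\e}\Bigr)\mathrm{Id}\,.$$
The second summand contributes zero in the limit by the equipartition statement \eqref{eq:equi-rep}. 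For the first one, I would further replace $2\e|\nabla v_\e|^2$ by $|\nabla w_\e|$, again at a cost vanishing in $L^1$ by \eqref{eq:equi-rep}. Thus
$$\int_\O \bigl[2\e\nabla v_\e \otimes \nabla v_\e - \bigl(\e|\nabla v_\e|^2 + \tfrac{(v_\e-1)^2}{4\e}\bigr)\mathrm{Id}\bigr]:DX\,\dd x = -\int_{\overline\O \times \mathbf G_{N-1}} A:DX(x)\,\dd V_\e(x,A) + o(1)\,,$$
using that $\mathrm{Id}-\nu_\e\otimes\nu_\e$ is precisely the projector encoded by $V_\e$. By the weak-$*$ convergence $V_\e \wsto V_*$ and the disintegration $V_* = (\HH^{N-1}\res \widehat J_{u_*})\otimes V_x$, this limit equals $-\int_{\widehat J_{u_*}}\overline A:DX\,\dd\HH^{N-1}$.

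For the right-hand side, Lemma \ref{lem:conv-bdry} supplies the two needed facts: first, $(\eta_\e+1)(\partial_\nu u_\e)^2 + \e(\partial_\nu v_\e)^2$ converges weakly$^*$ as a measure on $\partial\O$ to $\lambda_*$, so the corresponding integral against the continuous function $X\cdot\nu_\O$ converges to $\int_{\partial\O}(X\cdot\nu_\O)\,\dd\lambda_*$; second, $\partial_\nu u_\e \wto \nabla u_*\cdot\nu_\O$ weakly in $L^2(\partial\O)$, which handles the crossed boundary term $2(\eta_\e+1)\int_{\partial\O}(\partial_\nu u_\e)(X_\tau\cdot\nabla_\tau g)\,\dd\HH^{N-1}$ in the limit (noting $\eta_\e+1 \to 1$). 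The remaining $-(\eta_\e+1)|\nabla_\tau g|^2(X\cdot\nu_\O)$ term passes trivially to the limit by dominated convergence. Collecting these limits and moving every term to the appropriate side of the identity gives exactly \eqref{eq:first-varter}.
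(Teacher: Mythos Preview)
Your proof is correct and follows essentially the same approach as the paper: strong $L^2$-convergence for the bulk $u$-term, equipartition \eqref{eq:equi-rep} to reduce the phase-field term to the varifold integral, varifold convergence plus disintegration for the $\overline A$-term, and Lemma \ref{lem:conv-bdry} for the boundary contributions. The only cosmetic difference is that Lemma \ref{lem:conv-bdry} states the weak-$*$ convergence for $(\partial_\nu u_\e)^2 + \e(\partial_\nu v_\e)^2$ rather than $(\eta_\e+1)(\partial_\nu u_\e)^2 + \e(\partial_\nu v_\e)^2$, but since $\eta_\e\to 0$ and $(\partial_\nu u_\e)^2$ is bounded in $L^1(\partial\O)$ this is harmless.
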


\begin{proof}
Using the strong convergence \eqref{eq:strong-conv} established before, it is easy to pass to the limit in the first integral and in the left hand side of \eqref{eq:first-var}. We get for all $X \in \C^1_c(\R^N;\R^N)$, 
\begin{multline}\label{eq:inner1}
\int_\O \left(2 (\sqrt{\eta_\e+v^2_\e} \,\nabla u_\e) \otimes (\sqrt{\eta_\e+v^2_\e}\, \nabla u_\e)-(\eta_\e+v_\e^2)|\nabla u_\e|^2 {\rm Id} \right):D X\dd x\\
 \mathop{\longrightarrow}\limits_{\e\to0} \int_\O \left(2 \nabla u_* \otimes \nabla u_*-|\nabla u_*|^2 {\rm Id} \right):D X\dd x\,.
\end{multline}
According to Lemma \ref{lem:conv-bdry} we can also pass to the limit in the boundary integrals in the right hand side of \eqref{eq:first-var}, we get that
\begin{multline}\label{eq:inner1bis}
\int_{\partial\O} \left[(\eta_\e+1)(\partial_\nu u_\e)^2+\e(\partial_\nu v_\e)^2\right] X \cdot \nu \dd\HH^{N-1}\\
-(1+\eta_\e)\int_{\partial\O}X\cdot \nu |\nabla_\tau g|^2\dd\HH^{N-1}+2(\eta_\e+1)\int_{\partial \O} (\partial_\nu u_\e)(X_\tau\cdot\nabla_\tau g) \dd\HH^{N-1}\\
 \mathop{\longrightarrow}\limits_{\e\to0}  \int_{\partial\O} (X\cdot\nu_\O) \dd\lambda_*-\int_{\partial\O}  |\nabla_\tau g|^2(X\cdot\nu_\O)\dd \HH^{N-1}+2\int_{\partial\O} (\nabla u_*\cdot \nu_\O) (X_\tau\cdot\nabla_\tau g) \dd\HH^{N-1}\,.
\end{multline}

It remains to pass to the limit in the second integral in the left hand side of \eqref{eq:first-var}. Using the chain rule, we have $\nabla w_\e=\Phi'(v_\e)\nabla v_\e$. The {equipartition} of energy \eqref{eq:equi-rep} thus implies that
\begin{multline}\label{eq:inner2}
\lim_{\e \to 0}  \int_\O\left(2\e \nabla v_\e \otimes \nabla v_\e - \e|\nabla v_\e|^2{\rm Id} -\frac{(v_\e-1)^2}{4\e}{\rm Id}\right):DX  \dd x\\
=\lim_{\e \to 0}  \int_{\O \cap \{\nabla v_\e \neq 0\} }2\e|\nabla v_\e|^2 \left(\frac{\nabla v_\e}{|\nabla v_\e|} \otimes  \frac{\nabla v_\e}{|\nabla v_\e|}  - {\rm Id}\right):D X \dd x\\
= \lim_{\e \to 0}  \int_{\O \cap \{ \nabla w_\e \neq 0\}} |\nabla w_\e| \left( \frac{\nabla w_\e}{|\nabla w_\e|} \otimes  \frac{\nabla w_\e}{|\nabla w_\e|}  - {\rm Id}\right):D X \dd x\\
=-\int_{\overline\O \times \mathbf G_{N-1}} A:D X(x)\dd V_*(x,A)=-\int_{\widehat J_{u_*}} \overline A:D X \dd \HH^{N-1}.
\end{multline}
Gathering \eqref{eq:inner1}, \eqref{eq:inner1bis} and \eqref{eq:inner2}, we infer that \eqref{eq:first-varter} holds.
\end{proof}

Let us now identify the first moment $\overline A$ of the measure $V_x$. We first establish some algebraic properties of this matrix.

\begin{lemma}\label{lem:barA}
For $\HH^{N-1}$-almost every $x \in \widehat J_{u_*}$, the matrix $\overline A(x)$ satisfies
$$\overline A(x) \geq 0, \quad {\rm tr}(\overline A(x))=N-1, \quad \rho(\overline A(x))=1,$$
where $\rho$ denotes the spectral radius.
\end{lemma}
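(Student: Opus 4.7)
The plan is to extract the three properties from the defining integral $\overline{A}(x)=\int_{\mathbf G_{N-1}} A\,\dd V_x(A)$ together with the fact that $V_x$ is a probability measure on $\mathbf G_{N-1}$. The first two assertions are elementary, while the spectral radius equality requires identifying $V_x$ pointwise, which is the main analytic obstacle.

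Each $A\in\mathbf G_{N-1}$ can be written as $A={\rm Id}-e_A\otimes e_A$ for some $e_A\in\mathbf S^{N-1}$; in particular $A$ is symmetric positive semidefinite with $\tr(A)=N-1$ and $\|A\|_{\rm op}=1$. Integrating the nonnegative quadratic form $v^T A v = 1-(v\cdot e_A)^2$ against the probability measure $V_x$ immediately yields $\overline{A}(x)\geq 0$, while linearity of the trace gives $\tr(\overline{A}(x))=N-1$. Moreover, for every unit $v\in\R^N$,
$$v^T\overline{A}(x)\,v \;=\; \int_{\mathbf G_{N-1}}\big(1-(v\cdot e_A)^2\big)\,\dd V_x(A) \;\leq\; 1,$$
so, as $\overline{A}(x)$ is symmetric PSD, $\rho(\overline{A}(x))=\|\overline{A}(x)\|_{\rm op}\leq 1$.

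The matching lower bound $\rho(\overline{A}(x))\geq 1$ is the main obstacle, and I would deduce it from the stronger pointwise identification
$$V_x = \delta_{{\rm Id}-\nu_{u_*}(x)\otimes\nu_{u_*}(x)} \qquad\text{for $\HH^{N-1}$-a.e.\ }x\in\widehat J_{u_*},$$
which forces $\overline{A}(x)={\rm Id}-\nu_{u_*}(x)\otimes\nu_{u_*}(x)$, a projection with spectral radius $1$ (and consistent with the trace and PSD properties already established). To justify this identification I would perform a blow-up at a generic point $x_0\in\widehat J_{u_*}$: by $(N-1)$-rectifiability of the jump set of $u_*\in SBV^2(\O)$, the set $\widehat J_{u_*}$ admits an approximate tangent plane normal to $\nu_{u_*}(x_0)$ at $\HH^{N-1}$-a.e.\ $x_0$, and the mass identification $\|V_*\|=\HH^{N-1}\res\widehat J_{u_*}$ already fixes the correct weight in shrinking balls. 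Using the energy bound \eqref{eq:hypnrj}, the convergence of energies \eqref{eq:hypothesis}, and the equipartition \eqref{eq:equi-rep}, the rescaled profiles of $w_\e=\Phi(v_\e)$ at the natural scale $\e$ around $x_0$ should concentrate onto the one-dimensional optimal transition profile oriented along $\nu_{u_*}(x_0)$. This alignment forces $\nabla w_\e/|\nabla w_\e|$ to concentrate on $\pm\nu_{u_*}(x_0)$ in a varifold sense, so the disintegration $V_{x_0}$ collapses to the announced Dirac mass. This blow-up/concentration step is the technical heart of the lemma and is the analogue, for the coupled Ambrosio-Tortorelli system, of the fine asymptotic analyses of \cite{Hutchinson_Tonegawa_2000} for the Allen-Cahn functional and of \cite{Ambrosio_Soner_1997} for Ginzburg-Landau.
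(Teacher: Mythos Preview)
Your argument for $\overline A(x)\geq 0$, $\tr(\overline A(x))=N-1$, and $\rho(\overline A(x))\leq 1$ is correct and in fact cleaner than the paper's: you read these properties directly from the definition of $\overline A(x)$ as the barycenter of a probability measure on $\mathbf G_{N-1}$, whereas the paper passes through the approximating sequence $A_\e$ and the varifold convergence $V_\e\wsto V_*$.

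For the remaining inequality $\rho(\overline A(x))\geq 1$, however, there is a genuine gap. You propose to deduce it from the full identification $V_x=\delta_{{\rm Id}-\nu_{u_*}(x)\otimes\nu_{u_*}(x)}$ via a blow-up of the transition profiles of $w_\e$ at scale $\e$, but this is only asserted (``should concentrate''), not proved: for the coupled Ambrosio--Tortorelli system no monotonicity formula or profile-convergence result is available, and the analogy with \cite{Hutchinson_Tonegawa_2000} is far from immediate. The paper takes a much shorter and self-contained route. Since $\rho(A_\e)=1$ pointwise (each $A_\e$ is a rank-$(N-1)$ projection) and since, by the energy convergence \eqref{eq:hypothesis} and \eqref{eq;weps}, one has $A_\e|\nabla w_\e|\,\LL^N\res\O\wsto \overline A\,\HH^{N-1}\res\widehat J_{u_*}$ weakly* in $\M(\overline\O;\mathbf M^{N\times N})$ \emph{together with convergence of the total masses}, the Reshetnyak continuity theorem applied to the convex, continuous, positively $1$-homogeneous function $\rho$ on symmetric matrices yields, for every $\varphi\in\C^0(\overline\O)$,
\[
\int_{\widehat J_{u_*}}\varphi\,\rho(\overline A)\,\dd\HH^{N-1}=\lim_{\e\to0}\int_\O\varphi\,\rho(A_\e)\,|\nabla w_\e|\,\dd x=\lim_{\e\to0}\int_\O\varphi\,|\nabla w_\e|\,\dd x=\int_{\widehat J_{u_*}}\varphi\,\dd\HH^{N-1},
\]
whence $\rho(\overline A)=1$ $\HH^{N-1}$-a.e.\ directly. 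The blow-up identification you sketch is the content of the \emph{subsequent} Lemmas~\ref{lem:intpt} and~\ref{lem:bdry}; there it is carried out not on the $\e$-profiles but on the limiting first-variation identity \eqref{eq:first-varter} combined with \cite[Lemma~3.9]{Ambrosio_Soner_1997} and the monotonicity formula for stationary varifolds, and it uses precisely the conclusions $\tr(\overline A)=N-1$ and ``eigenvalues in $[0,1]$'' of the present lemma as inputs.
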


\begin{proof}
To simplify notation,  we set
$$A_\e:=\left({\rm Id} -\frac{\nabla w_\e}{|\nabla w_\e|} \otimes  \frac{\nabla w_\e}{|\nabla w_\e|}\right)\,.$$
The matrix \(A_\e\) is well-defined on the set \( \O \cap \{ \nabla w_\e \neq 0 \}\), it is a symmetric matrix corresponding to the orthogonal projection on $\{\nabla w_\e\}^\perp$.
It satisfies $A_\e \geq 0$,  ${\rm tr}(A_\e)=N-1$, and  $\rho(A_\e)=1$ in $\O\cap \{ \nabla w_\e \neq 0 \}$. 
For all $\varphi \in \C^0(\overline\O)$, we have
\begin{multline*}
\int_{\widehat J_{u_*}} {\rm tr}(\overline A) \varphi\dd\HH^{N-1}  =  \int_{\overline\O \times \mathbf G_{N-1}} {\rm tr}(A) \varphi(x)\dd V_*(x,A) =  \lim_{\e \to 0}\int_{\O \times \mathbf G_{N-1}} {\rm tr}(A) \varphi(x)\dd V_\e(x,A)\\
 =  \lim_{\e\to 0}\int_{\O \cap \{ \nabla w_\e \neq 0\}}  {\rm tr}(A_\e)\varphi |\nabla w_\e|  \dd x 
 =  (N-1)\lim_{\e \to 0} \int_\O\varphi |\nabla w_\e|\dd x 
 =  (N-1)\int_{\widehat J_{u_*}}\varphi\dd\HH^{N-1}\,,
\end{multline*}
which shows that ${\rm tr}(\overline A)=(N-1)$ $\HH^{N-1}$-a.e.\ on $\widehat J_{u_*}$.  If further $\varphi \geq 0$ and $z \in \R^N$, then
\begin{multline*}
\int_{\widehat J_{u_*}}  (\overline Az\cdot z)\varphi\dd\HH^{N-1}  =  \int_{\overline\O \times \mathbf G_{N-1}} (Az\cdot z) \varphi(x)\dd V_*(x,A)\\
 =  \lim_{\e \to 0}\int_{\O \times \mathbf G_{N-1}}(Az\cdot z) \varphi(x)\dd V_\e(x,A)
 =  \lim_{\e\to 0}\int_{\O \cap \{ \nabla w_\e \neq 0\} } \left(A_\e z\cdot z\right)\varphi |\nabla w_\e|  \dd x\geq 0\,.
\end{multline*}
As a consequence, for all $z \in \R^N$, we have $\overline A z\cdot z \geq 0$ $\HH^{N-1}$-a.e.\ on $\widehat J_{u_*}$, from which we deduce that $\overline A$ is a nonnegative matrix $\HH^{N-1}$-a.e.\ on $\widehat J_{u_*}$. 

Since for all $\varphi \in \C^0(\overline\O)$ we have
$$\int_{\O \cap \{ \nabla w_\e \neq 0\}} |\nabla w_\e| \left({\rm Id}-\frac{\nabla w_\e}{|\nabla w_\e|} \otimes  \frac{\nabla w_\e}{|\nabla w_\e|} \right)\varphi \dd x\to \int_{\overline\O \times \mathbf G_{N-1}} A\varphi(x)\dd V_*(x,A)=\int_{\widehat J_{u_*}} \overline A\varphi \dd \HH^{N-1}\,,$$
we deduce that
\begin{equation}\label{eq:crucial}
A_\e|\nabla w_\e|\LL^N\res\O \wsto \overline A \,  \HH^{N-1}\res \widehat J_{u_*} \quad \text{ weakly* in }\M(\overline\O;\mathbf M^{N \times N})\,.
\end{equation}
Using that the spectral radius $\rho$ is a convex, continuous, and positively $1$-homogeneous function on the set of symmetric matrices, it follows from Reshetnyak continuity Theorem (see \cite[Theorem 2.39]{Ambrosio_Fusco_Pallara_2000}) that for all $\varphi \in \C^0(\overline\O)$,
$$\int_{\widehat J_{u_*}} \varphi \, \dd\HH^{N-1}=\lim_{\e \to 0}\int_\O \varphi |\nabla w_\e|\dd x=\lim_{\e \to 0}\int_\O \varphi\rho(A_\e)|\nabla w_\e|\dd x = \int_{\widehat J_{u_*}} \varphi \rho(\overline A) \,\dd\HH^{N-1}\,,$$
hence $\rho(\overline A)=1$ $\HH^{N-1}$-a.e.\ on $\widehat J_{u_*}$. 
\end{proof}

We now focus on the interior structure of the varifold $V_*$.

\begin{lemma}\label{lem:intpt}
For $\HH^{N-1}$-a.e.\ \(x\) in $J_{u_*}=\widehat J_{u_*} \cap \O$, we have $\overline A(x)= {\rm Id}-\nu_{u_*}(x) \otimes \nu_{u_*}(x)$, 
where $\nu_{u_*}$ is the approximate normal to the countably $\HH^{N-1}$-rectifiable set $J_{u_*}$.
\end{lemma}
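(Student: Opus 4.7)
The plan is to blow up the first-variation identity \eqref{eq:first-varter} at a $\HH^{N-1}$-generic point $x_0\in J_{u_*}$ and extract from the limit an algebraic constraint that pins down $\overline A(x_0)$. First I would fix $x_0\in J_{u_*}$ simultaneously satisfying: $(a)$ $\widehat J_{u_*}$ has approximate tangent hyperplane $T_{x_0}=\nu_{u_*}(x_0)^\perp$ with $\HH^{N-1}$-density one at $x_0$; $(b)$ $x_0$ is a Lebesgue point of the bounded matrix field $\overline A$ with respect to $\HH^{N-1}\res \widehat J_{u_*}$; and $(c)$ the $(N-1)$-upper density of the finite Radon measure $\mu:=|\nabla u_*|^2\LL^N$ vanishes at $x_0$. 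Property $(c)$ holds on a set of full $\HH^{N-1}$-measure in $J_{u_*}$ because $\mu\ll \LL^N$ forces $\mu(E)=0$ for every $E\subset J_{u_*}$, so by Federer's upper-density comparison the set $\{\Theta^{*(N-1)}(\mu,\cdot)>0\}\cap J_{u_*}$ is $\HH^{N-1}$-negligible.

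Next I would apply \eqref{eq:first-varter} with the rescaled test field $X_r(y):=\phi((y-x_0)/r)$ for arbitrary $\phi\in \C^1_c(\R^N;\R^N)$ with $\supp \phi\subset B_R$. For $r>0$ small enough so that $\overline B_{Rr}(x_0)\subset \O$, the field $X_r$ is compactly supported in $\O$, hence every boundary integral in \eqref{eq:first-varter} vanishes. Substituting $y=x_0+rz$ and dividing by $r^{N-2}$, the bulk contribution is controlled by $C r^{-(N-1)}\mu(B_{Rr}(x_0))\to 0$ by $(c)$, whereas the singular contribution becomes $\int\overline A(x_0+rz):D\phi(z)\,\dd\mu_r(z)$, where $\mu_r$ denotes the pushforward of $r^{-(N-1)}\HH^{N-1}\res \widehat J_{u_*}$ by the rescaling $y\mapsto (y-x_0)/r$. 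By $(a)$, $\mu_r\wsto \HH^{N-1}\res T_{x_0}$, and by $(b)$ I can replace $\overline A(x_0+rz)$ by $\overline A(x_0)$ up to an $o(1)$ error in $L^1(\dd\mu_r)$. Passing to the limit yields the constant-coefficient identity
\begin{equation*}
\overline A(x_0):\int_{T_{x_0}} D\phi \,\dd\HH^{N-1}=0\quad \text{for every }\phi\in \C^1_c(\R^N;\R^N).
\end{equation*}

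Finally I would decode this identity. Choosing coordinates $\R^N=T_{x_0}\oplus \R\nu$ with $\nu:=\nu_{u_*}(x_0)$ and writing each component of $\phi$ accordingly, the tangential derivatives $\partial_{\tau_j}\phi_i(\cdot,0)$ integrate to zero on $T_{x_0}$, so only the normal derivatives $\partial_\nu\phi_i(\cdot,0)$ survive. Taking $\phi_i(y_\tau,y_\nu)=\psi(y_\tau)\eta(y_\nu)$ with $\psi\geq 0$ nontrivial and $\eta'(0)\neq 0$ forces each component $(\overline A(x_0)\nu)_i=0$, i.e.\ $\overline A(x_0)\nu=0$. Combined with $\overline A(x_0)\geq 0$, $\tr\overline A(x_0)=N-1$ and $\rho(\overline A(x_0))=1$ from Lemma~\ref{lem:barA}, the restriction $\overline A(x_0)|_{T_{x_0}}$ is a symmetric nonnegative operator on an $(N-1)$-dimensional space with eigenvalues in $[0,1]$ summing to $N-1$; all eigenvalues must therefore equal $1$, which gives $\overline A(x_0)={\rm Id}-\nu\otimes\nu$ as claimed.

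The main obstacle will be the rigorous passage to the limit in the singular term, which combines weak$^*$ convergence of the rescaled rectifiable measures $\mu_r$ with a Lebesgue-point statement for the matrix field $\overline A$ against this varying family of measures. The density estimate in $(c)$ also deserves care, since points of $J_{u_*}$ are automatically atypical for $\LL^N$-based Lebesgue differentiation and the argument must therefore rely on the interplay between absolute continuity of $\mu$ and Federer's density comparison rather than on classical Lebesgue points.
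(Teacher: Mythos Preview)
Your blow-up setup (choice of generic $x_0$, rescaled test field, passage to the limit yielding the constant-coefficient identity $\overline A(x_0):\int_{T_{x_0}}D\phi\,\dd\HH^{N-1}=0$) is exactly the paper's Step~1. The difference is in how you extract $\overline A(x_0)={\rm Id}-\nu\otimes\nu$ from this identity. You observe directly that $\int_{T_{x_0}}D\phi\,\dd\HH^{N-1}=a\otimes\nu$ with $a=\int_{T_{x_0}}\partial_\nu\phi$ (tangential derivatives of compactly supported functions integrate to zero on the hyperplane), so by symmetry of $\overline A(x_0)$ the identity reads $a\cdot(\overline A(x_0)\nu)=0$ for all attainable $a\in\R^N$, hence $\overline A(x_0)\nu=0$; the trace and eigenvalue constraints from Lemma~\ref{lem:barA} then force the restriction to $T_{x_0}$ to be the identity. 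The paper instead proceeds in two stages: first it invokes \cite[Lemma~3.9]{Ambrosio_Soner_1997} to show $\overline A(x_0)$ has at most $N-1$ nonzero eigenvalues (hence is a rank-$(N-1)$ projection ${\rm Id}-e\otimes e$ for some unit $e$), and then, to identify $e$ with $\pm\nu_{u_*}(x_0)$, it interprets the blow-up identity as stationarity of the varifold $\HH^{N-1}\res T_{x_0}\otimes\delta_{\overline A(x_0)}$ and applies the monotonicity formula. Your route is more elementary and self-contained, bypassing both the Ambrosio--Soner lemma and varifold monotonicity; the paper's route is perhaps more in the spirit of the varifold framework and reuses machinery that is standard in that setting. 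Both are correct.
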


\begin{proof}

{\it Step 1: Let us show that for $\HH^{N-1}$-a.e.\ $x$ in  $J_{u_*}$, $\overline A(x)$ is a projection matrix onto a $(N-1)$-dimensional hyperplane.} 
To this aim, we perform a blow-up argument on the first variation equation \eqref{eq:first-varter}. Let $x_0 \in J_{u_*}$ be such that
\begin{enumerate}
\item $x_0$ is a Lebesgue point of $\overline A$ with respect to $\HH^{N-1}\res J_{u_*}$;
\item $J_{u_*}$ admits an approximate tangent space at $x_0$ given by $T_{x_0}=\{\nu_{u_*}(x_0)\}^\perp$;
\item
\begin{equation}\label{eq:BMM}
\lim_{\varrho\to 0}\frac{\HH^{N-1}(J_{u_*} \cap B_\varrho(x_0))}{\omega_{N-1}\varrho^{N-1}}=1;
\end{equation}
\item
$$\lim_{\varrho\to 0}\frac{1}{\varrho^{N-1}}\int_{B_\varrho(x_0)}|\nabla u_*|^2\dd x=0\,.$$
\end{enumerate}

It turns out that $\HH^{N-1}$-almost every point $x_0  \in J_{u_*}$ satisfies these properties. Indeed, (1) is a consequence of the Besicovitch differentiation theorem, (2) and (3) are consequences of the rectifiability of $ J_{u_*}$ (see Theorems 2.63 and 2.83 in \cite{Ambrosio_Fusco_Pallara_2000}), while condition (4) is a consequence of (3)  together with the fact that the measure $|\nabla u_*|^2\LL^N\res\O$ is singular with respect to $\HH^{N-1}\res J_{u_*}$.

Let $x_0 \in J_{u_*}$ be such a point and let $\varrho>0$ be such that $\overline{B_\varrho(x_0)}\subset \O$. For $\phi \in \C^\infty_c(\R^N;\R^N)$  such that ${\rm Supp}(\phi)\subset B_1$, we set $\phi_{x_0,\varrho}(x):=\phi\left(\frac{x-x_0}{\varrho}\right)$ for  $x \in \R^N$\,, 
so that $\phi_{x_0,\varrho} \in \C^\infty_c(\R^N;\R^N)$ and ${\rm Supp}(\phi_{x_0,\varrho}) \subset B_\varrho(x_0)$. Taking $\phi_{x_0,\varrho}$ as test vector field in \eqref{eq:first-varter} (note that $\phi_{x_0,\varrho}=0$ in a neighbourhood of $\partial\O$) yields
$$\int_{J_{u_*} \cap B_\varrho(x_0)} \overline A:D\phi_{x_0,\varrho}\dd\HH^{N-1}=-\int_{B_\varrho(x_0)} \left( |\nabla u_*|^2 {\rm Id} -2\nabla u_* \otimes \nabla u_*\right):D\phi_{x_0,\varrho}\dd x\,.$$
Dividing this identity by $\varrho^{N-2}$ yields 
\begin{multline*}
\frac{1}{\varrho^{N-1}}\int_{ J_{u_*} \cap B_\varrho(x_0)} \overline A:D\phi\left(\frac{\cdot-x_0}{\varrho}\right)\dd\HH^{N-1}\\
=\frac{-1}{\varrho^{N-1}}\int_{B_\varrho(x_0)} \left(|\nabla u_*|^2 {\rm Id}-2 \nabla u_* \otimes \nabla u_* \right):D \phi\left(\frac{\cdot-x_0}{\varrho}\right)\dd x\,.
\end{multline*}

We first show that the left hand side of the previous equality tends to zero as $\varrho \to 0$. Indeed, thanks to our choice of $x_0$, we have
\begin{multline*}
\left|\frac{1}{\varrho^{N-1}}\int_{B_\varrho(x_0)} \left(|\nabla u_*|^2 {\rm Id}- 2\nabla u_* \otimes \nabla u_* \right):D \phi\left(\frac{x-x_0}{\varrho}\right)\dd x\right|\\
\leq C \frac{\|D\phi\|_{L^\infty(B_1;\mathbf M^{N \times N})}}{\varrho^{N-1}}\int_{B_\varrho(x_0)}|\nabla u_*|^2\dd x \to 0\,,
\end{multline*}
for some constant \(C>0\).
For what concerns the right hand side, using first that $x_0$ is a Lebesgue point of $\overline A$ and \eqref{eq:BMM}, we get that
\begin{multline*}
\left|\frac{1}{\varrho^{N-1}}\int_{J_{u_*} \cap B_\varrho(x_0)} (\overline A-\overline A(x_0)):D\phi\left(\frac{\cdot-x_0}{\varrho}\right) \dd\HH^{N-1}\right|\\
\leq \frac{\|D\phi\|_{L^\infty(B_1;\mathbf M^{N \times N})}}{\varrho^{N-1}} \int_{J_{u_*} \cap B_\varrho(x_0)} |\overline A-\overline A(x_0)|\dd\HH^{N-1}\to 0\,,
\end{multline*}
so that
\begin{multline*}
\lim_{\varrho\to 0}\frac{1}{\varrho^{N-1}}\int_{J_{u_*} \cap B_\varrho(x_0)} \overline A:D\phi\left(\frac{\cdot-x_0}{\varrho}\right)\dd\HH^{N-1}\\
= \overline A(x_0):\lim_{\varrho\to 0}\frac{1}{\varrho^{N-1}}\int_{J_{u_*} \cap B_\varrho(x_0)}D\phi\left(\frac{\cdot-x_0}{\varrho}\right)\dd\HH^{N-1}\,.
\end{multline*}
Using next that $J_{u_*}$ admits an approximate tangent space that we denote by $T_{x_0}$ at $x_0$, we obtain that
$$\lim_{\varrho\to 0}\frac{1}{\varrho^{N-1}}\int_{J_{u_*} \cap B_\varrho(x_0)}D\phi\left(\frac{\cdot-x_0}{\varrho}\right)\dd\HH^{N-1}=\int_{T_{x_0}\cap B_1} D\phi\dd\HH^{N-1}\,.$$
Hence, 
\begin{equation}\label{eq:stationnary}
\int_{T_{x_0}\cap B_1} \overline A(x_0):D\phi\dd\HH^{N-1}=0 \quad \text{ for all }\phi \in \C^\infty_c(B_1;\R^N)\,.
\end{equation}

Let $t \in (0,1)$ be such that $t<\sqrt{N}^{-N}$, the measure $\nu:=\frac{1}{\omega_{N-1}}\HH^{N-1}\res T_{x_0}$ satisfies 
$$t^{N-1} \leq \nu(\overline B_t) \leq \nu(B_1) \leq 1.$$
According to \cite[Lemma 3.9]{Ambrosio_Soner_1997} with $\beta=s=N-1$, we get that the matrix $\overline A(x_0)$ has at most $N-1$ nonzero eigenvalues. Recalling that ${\rm tr}(\overline A(x_0))=N-1$ and that all eigenvalues of $\overline A(x_0)$ belong to $[0,1]$, this implies that $\overline A(x_0)$ has exactly $N-1$ eigenvalues which are equal to $1$, and one eigenvalue which is zero. Hence, there exists $e \in \mathbf S^{N-1}$ such that $A={\rm Id}-e\otimes e$.
\vskip3pt

\noindent{\it Step 2: Let us show that $e=\pm\nu_{u_*}(x_0)$.} Let us consider the varifold 
$$\widetilde V:=  \HH^{N-1}\res T_{x_0} \otimes \delta_{\overline A(x_0)} \in \M(B_1 \times \mathbf G_{N-1})\,,$$
whose action is given by
$$\int_{B_1 \times \mathbf G_{N-1}} \varphi(x,A)\dd \widetilde V(x,A)=\int_{B_1 \cap T_{x_0}} \varphi(x,\overline A(x_0))\dd\HH^{N-1}(x) \quad \text{ for all }\varphi \in \C^0_c(B_1 \times \mathbf G_{N-1})\,.$$
Since $\overline A(x_0)$ is a projection matrix onto the hyperplane $e^\perp$, it follows that $\widetilde V \in \mathbf V_{N-1}(B_1)$ is a $(N-1)$-varifold in $B_1$ with $\|\widetilde V\|=\HH^{N-1}\res T_{x_0}$. Moreover, condition \eqref{eq:stationnary} shows that $\widetilde V$ is a stationary varifold, cf. Section \ref{sec:varifold}. It follows from the monotonicity formula (see e.g.\ formula (40.3) page 236 in \cite{Simon_1983}) that for all $x \in T_{x_0} \cap B_1$ and all $\varrho>0$ such that $B_\varrho(x) \subset B_1$,
$$
\frac{\HH^{N-1}(T_{x_0} \cap B_\varrho(x))}{\varrho^{N-1}}= \frac{\HH^{N-1}(T_{x_0} \cap B_r(x))}{r^{N-1}} + 
 \int_{T_{x_0} \cap B_\varrho(x) \setminus B_r(x)}\frac{|e\cdot (y-x)|^2}{|y-x|^{N+1}}\dd\HH^{N-1}(y)$$
for all $0<r<\varrho$. Since
$$\frac{\HH^{N-1}(T_{x_0} \cap B_r(x))}{r^{N-1}}=\frac{\HH^{N-1}(T_{x_0} \cap B_\varrho(x))}{\varrho^{N-1}}=\omega_{N-1} \,,$$
we deduce that 
$$\int_{T_{x_0}\cap B_\varrho(x) \setminus B_r(x)} \frac{|e\cdot (y-x)|^2}{|y-x|^{N+1}}\dd\HH^{N-1}(y)=0\,.$$
Choosing $x=0$, $\varrho=1$, and letting $r \to 0^+$, we infer that $y\cdot e=0$ for $\HH^{N-1}$-a.e. $y \in T_{x_0}\cap B_1$ which implies that $T_{x_0}=e^\perp$, hence $e=\pm \nu_{u_*}(x_0)$.
\end{proof}

Next we focus on boundary points.

\begin{lemma}\label{lem:bdry}
For $\HH^{N-1}$-a.e. $x \in \widehat J_{u_*} \cap \partial\O$, we have $\overline A(x)={\rm Id} -\nu_\O(x) \otimes \nu_\O(x)$, 
where $\nu_\O$ is the outward unit normal to $\partial \O$.
\end{lemma}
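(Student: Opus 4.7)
The plan is to mimic the blow-up argument of Lemma \ref{lem:intpt}, but centered at a boundary point. First I would select an $\HH^{N-1}$-a.e.\ point $x_0 \in \widehat J_{u_*} \cap \partial\O$ enjoying the following properties: (a) $x_0$ is a Lebesgue point of $\overline A$ with respect to $\HH^{N-1}\res\widehat J_{u_*}$; (b) $\partial\O$ admits the tangent space $T_{x_0}(\partial\O)=\nu_\O(x_0)^\perp$, and $x_0$ is a density-one point of $\partial\O\cap\{u_*\neq g\}$ in $\partial\O$; (c) $x_0$ is a zero-density point of $J_{u_*}$, using that $J_{u_*}\subset\O$ and $\partial\O\cap\{u_*\neq g\}\subset\partial\O$ are disjoint rectifiable sets so that the measures $\HH^{N-1}\res J_{u_*}$ and $\HH^{N-1}\res(\partial\O\cap\{u_*\neq g\})$ are mutually singular; (d) $\lim_{\varrho\to 0}\varrho^{-(N-1)}\int_{B_\varrho(x_0)}|\nabla u_*|^2\dd x=0$, which holds a.e.\ since $|\nabla u_*|^2\LL^N\res\O$ is singular with respect to $\HH^{N-1}\res\partial\O$; (e) the $(N-1)$-density $\Theta^{N-1}(\lambda_*,x_0)$ is finite, which follows from the Radon-Nikodym decomposition of $\lambda_*$ against the finite reference measure $\HH^{N-1}\res\partial\O$; and (f) $x_0$ is a Lebesgue point of the $L^2(\partial\O)$ function $\nabla u_*\cdot\nu_\O$.

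Next I would plug the rescaled test field $X=\phi_{x_0,\varrho}$, with $\phi\in\C^\infty_c(B_1;\R^N)$ and $\phi_{x_0,\varrho}(x)=\phi((x-x_0)/\varrho)$, into the first inner variation identity \eqref{eq:first-varter} and divide by $\varrho^{N-2}$. The bulk term on the left tends to $0$ by (d). For the $\widehat J_{u_*}$ term, the interior jump contribution is negligible by (c), while the boundary contribution, after rescaling, converges thanks to (a)--(b) to
\begin{equation*}
\int_{T_{x_0}(\partial\O)\cap B_1}\overline A(x_0):D\phi\dd\HH^{N-1}.
\end{equation*}
On the right-hand side, all three boundary integrals are $O(\varrho)$ after division by $\varrho^{N-2}$: the $\lambda_*$ term by (e), the $|\nabla_\tau g|^2$ term by smoothness of $g$ together with the $(N-1)$-Ahlfors regularity of $\partial\O$, and the mixed term $\int(\nabla u_*\cdot\nu_\O)(X_\tau\cdot\nabla_\tau g)$ by (f) and Cauchy-Schwarz. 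Passing to the limit $\varrho\to 0$ therefore yields the stationarity identity
\begin{equation*}
\int_{T_{x_0}(\partial\O)\cap B_1}\overline A(x_0):D\phi\dd\HH^{N-1}=0\qquad\text{for all }\phi\in\C^\infty_c(B_1;\R^N),
\end{equation*}
which is formally identical to \eqref{eq:stationnary} but with the tangent plane to $\partial\O$ in place of the tangent plane to $J_{u_*}$.

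From here the conclusion follows verbatim from Step~2 of the proof of Lemma \ref{lem:intpt}. Lemma \ref{lem:barA} already guarantees $\overline A(x_0)\geq 0$, $\tr\overline A(x_0)=N-1$, and $\rho(\overline A(x_0))=1$, so invoking \cite[Lemma 3.9]{Ambrosio_Soner_1997} exactly as before forces $\overline A(x_0)=\mathrm{Id}-e\otimes e$ for some $e\in\mathbf S^{N-1}$. Then the varifold $\widetilde V:=\HH^{N-1}\res T_{x_0}(\partial\O)\otimes\delta_{\overline A(x_0)}$ is stationary in $B_1$, and the monotonicity formula applied at the origin forces $(y\cdot e)^2=0$ for $\HH^{N-1}$-a.e.\ $y\in T_{x_0}(\partial\O)\cap B_1$, hence $e=\pm\nu_\O(x_0)$, proving the lemma. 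I expect the main technical point to be the sign/size of the $\lambda_*$-contribution on the right-hand side: one must ensure that the boundary measure cannot concentrate too much at $\HH^{N-1}$-a.e.\ point of $\widehat J_{u_*}\cap\partial\O$, which is where the choice (e) enters crucially.
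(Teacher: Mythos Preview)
Your proposal is correct and actually delivers the full stationarity identity $\int_{T_{x_0}(\partial\O)\cap B_1}\overline A(x_0):D\phi\,\dd\HH^{N-1}=0$ for \emph{arbitrary} $\phi\in\C^\infty_c(B_1;\R^N)$ in one step, after which the argument of Lemma~\ref{lem:intpt} can be copied verbatim. The key observation that makes this work is your choice of condition (e): requiring finite $(N-1)$-density of $\lambda_*$ at $x_0$ (which holds $\HH^{N-1}$-a.e.\ on $\partial\O$ by Besicovitch differentiation against $\HH^{N-1}\res\partial\O$) ensures that the $\lambda_*$ boundary contribution is $O(\varrho)$ after division by $\varrho^{N-2}$, and the same is true of the other two boundary terms by (f) and smoothness of $g$.

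The paper takes a different, more hands-on route that avoids invoking both the Ambrosio--Soner lemma and the monotonicity formula at the boundary. It splits the blow-up into two families of test fields: tangential ones $\varphi\big(\frac{\cdot-x_0}{\varrho}\big)\tilde\tau$ with $\tilde\tau\cdot\nu_\O=0$ on $\partial\O$, and normal ones $\varphi\big(\frac{\cdot-x_0}{\varrho}\big)\tilde\nu$. For tangential fields the $\lambda_*$ and $|\nabla_\tau g|^2$ boundary terms in \eqref{eq:first-varter} vanish identically (since $X\cdot\nu_\O=0$), and the surviving identity yields $(\overline A(x_0)\tau_i)\cdot\nu_\O(x_0)=0$ for every tangent vector $\tau_i$, so $\nu_\O(x_0)$ is an eigenvector of $\overline A(x_0)$. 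For normal fields one reads off that the associated eigenvalue is $0$. The trace condition $\tr\overline A(x_0)=N-1$ together with $0\le\overline A(x_0)\le{\rm Id}$ from Lemma~\ref{lem:barA} then forces the remaining eigenvalues to equal $1$, by pure linear algebra. This argument only needs the weaker density conditions $\lambda_*(B_\varrho(x_0))/\varrho^{N-2}\to 0$ and $\varrho^{2-N}\int_{\partial\O\cap B_\varrho(x_0)}|\nabla u_*\cdot\nu_\O|\,\dd\HH^{N-1}\to 0$. Your approach is cleaner in that it recycles the interior machinery wholesale; the paper's approach buys a self-contained, more elementary conclusion at the boundary.
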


\begin{proof}
We perform again a blow-up argument, this time at boundary points. Let $x_0 \in \widehat J_{u_*} \cap \partial\O$ be such that:
\begin{enumerate}
\item $x_0$ is a Lebesgue point of  $\overline A$ with respect to $\HH^{N-1}\res\widehat J_{u_*}$;
\item $\widehat J_{u_*}$ admits an approximate tangent space at $x_0$ which coincides with the (usual) tangent space of $\partial\O$ at $x_0$ (this in particular implies that $\nu_{u_*}(x_0)=\pm \nu_\O(x_0)$);
\item
$$\lim_{\varrho\to 0}\frac{\HH^{N-1}(\widehat J_{u_*} \cap B_\varrho(x_0))}{\omega_{N-1}\varrho^{N-1}}=1\,;$$
\item
$$\lim_{\varrho\to 0}\frac{1}{\varrho^{N-1}}\int_{B_\varrho(x_0)\cap \O}|\nabla u_*|^2\dd x=0\,;$$
\item
$$\lim_{\varrho \to 0}\frac{\lambda_*(B_\varrho(x_0))}{\varrho^{N-2}}=0\,, \qquad \lim_{\varrho \to 0}\frac{1}{\varrho^{N-2}}\int_{\partial\O \cap B_\varrho(x_0)} |\nabla u_*\cdot \nu|\dd\HH^{N-1}=0\,.$$
\end{enumerate}
It turns out that $\HH^{N-1}$ almost every point $x_0  \in \widehat J_{u_*} \cap \partial\O$ satisfies these properties. Indeed, (1) is a consequence of the Besicovitch differentiation Theorem while (2) comes from the rectifiability of $\widehat J_{u_*}$ (see \cite[Theorems 2.83]{Ambrosio_Fusco_Pallara_2000}) together with the locality of approximate tangent spaces (see \cite[Proposition 2.85]{Ambrosio_Fusco_Pallara_2000}). Condition (3) is again a consequence of the rectifiabilty of $\widehat J_{u_*}$ and the Besicovitch-Marstrand-Mattila theorem (see \cite[Proposition 2.63]{Ambrosio_Fusco_Pallara_2000}). Condition (4) is a consequence of (3)  together with the fact that the measure $|\nabla u_*|^2\LL^N\res\O$ is singular with respect to $\HH^{N-1}\res \widehat J_{u_*}$. To justify (5), we define for  $x \in \partial\O$,
$$\Theta(x):=\limsup_{\varrho\to 0}\frac{\lambda_*(B_\varrho(x))}{\varrho^{N-2}}\,.$$
According to \cite[Theorem 2.56]{Ambrosio_Fusco_Pallara_2000}, we have   
$t \HH^{N-2}(\{\Theta\geq t\}) \leq \lambda_*(\{\Theta\geq t\})<\infty$ for all $t>0$. 
Hence $\HH^{N-1}(\{\Theta\geq t\})=0$ for all $t>0$. As a consequence, $\HH^{N-1}(\{\Theta>0\})=0$. The second property of (5) can be obtained similarly replacing $\lambda_*$ by $|\nabla u_*\cdot \nu|\HH^{N-1}\res \partial\O$.

We choose  such a point $x_0 \in \partial \O \cap \widehat J_{u_*}$ and we take $\varrho>0$. 
\vskip3pt

\noindent {\it Step 1: We first prove that $\nu_\Omega(x_0)$ is an eigenvector of $\overline A(x_0)$.}  Consider first a test vector field $\phi$ of the form
$\phi(x):=\varphi\left(\frac{x-x_0}{\varrho}\right) \tilde \tau(x)$ for $x \in B_\varrho(x_0)$,  
where $\varphi\in \C_c^\infty(B_1)$ and $\tilde \tau \in \C_c^1(\R^N;\R^N)$ is such that $\tilde \tau\cdot\nu_\O=0$ on $\partial\O$. Plugging $\phi$ in \eqref{eq:first-varter} and using estimates similar to the proof of Lemma \ref{lem:intpt}, we obtain 
\begin{equation}\label{eq:stationnarybis}
\int_{T_{x_0}\cap B_1} \overline A(x_0):(\tilde \tau(x_0) \otimes \nabla \varphi)\dd\HH^{N-1}=0 \quad \text{ for all }\varphi \in \C^\infty_c(B_1)\,.
\end{equation}
Note that to get \eqref{eq:stationnarybis}, the boundary term is cancelled thanks to the second property of (5). Let $\{\tau_1,\ldots,\tau_{N-1}\}$ be an orthonormal basis of $T_{x_0}$, and $\nu:=\nu_\O(x_0)$ be the outward unit normal to $\O$ at $x_0$ (i.e. $\nu$ is a normal vector to $T_{x_0}$). We choose the vector field $\tilde \tau$ in such a way that $\tilde \tau(x_0)=\tau_i$,  and we decompose $\nabla\varphi$ along the orthonormal basis $\{\tau_1,\ldots,\tau_{N-1},\nu\}$ of $\R^N$ as $\nabla \varphi=\sum_{j=1}^{N-1} (\partial_j \varphi) \tau_j+(\partial_\nu \varphi)\nu$. 
Since $\int_{T_{x_0}\cap B_1} \partial_j \varphi\dd\HH^{N-1}=0$ for all $1 \leq j \leq N-1$, we infer from \eqref{eq:stationnarybis} that 
$$\Big((\overline A(x_0)\tau_i)\cdot\nu\Big) \int_{T_{x_0}\cap B_1} \partial_\nu \varphi\dd\HH^{N-1}=0\,.$$
From the arbitrariness of $\varphi$, it follows that $(\overline A(x_0)\tau_i)\cdot\nu=0$ for all $1\leq i \leq N-1$. Since $\overline A(x_0)$ is symmetric, we deduce that $\overline A(x_0)\nu\in T_{x_0}^\perp$, that is 
$\overline A(x_0)\nu=c\nu$ for some $c \in [0,1]$ (recall that all eigenvalues of $\overline A(x_0)$ belong to $[0,1]$ by Lemma~\ref{lem:barA}). Thus $\nu$ is an eigenvector of $\overline A(x_0)$, and by the spectral theorem, we can also assume without loss of generality that $\tau_1,\ldots,\tau_{N-1}$ are also eigenvectors of $\overline A(x_0)$. 
\vskip3pt

\noindent{\it Step 2:  We next show that $\overline A(x_0)$ is the projection matrix onto the tangent space to $\partial\O$ at $x_0$.}

 We now consider a test vector field $\phi$ of the form $\phi(x):=\tilde \nu(x)\varphi\left(\frac{x-x_0}{\varrho}\right)$ for $x \in B_\varrho(x_0)$,  
where $\varphi\in \C_c^\infty(B_1)$ and $\tilde \nu \in \C_c^1(\R^N;\R^N)$ is such that $\tilde \nu(x)$ is a normal vector to $\partial\O$ at $x \in \partial\O$ satisfying $\tilde \nu(x_0)=\nu_\O(x_0)$. Using again estimates in a similar way to the proof of Lemma \ref{lem:intpt} (this time, the boundary term is cancelled thanks to the first property of (5)), we obtain that
$$\int_{T_{x_0}\cap B_1} \overline A(x_0):(\nu \otimes \nabla \varphi)\dd\HH^{N-1}=0 \quad \text{ for all }\varphi \in \C^\infty_c(B_1)\,,$$
and thus, by Step 1, $c \int_{T_{x_0}\cap B_1} \partial_{\nu} \varphi\dd\HH^{N-1}=0$. 
By arbitrariness of $\varphi$, this last equality shows that $c=0$. As a consequence, there exist real numbers $c_1,\ldots,c_{N-1} \in [0,1]$ (the eigenvalues of $\overline A(x_0)$ associated to the eigenvectors $\tau_1,\ldots,\tau_{N-1}$) such that $\overline A(x_0)=\sum_{i=1}^{N-1} c_i \tau_i \otimes \tau_i$. 
According to Lemma~\ref{lem:barA}, ${\rm tr}(\overline A(x_0))=c_1+\cdots +c_{N-1}=N-1$,  and we deduce that $c_1=\cdots=c_{N-1}=1$. Hence, $\overline A(x_0)=\sum_{i=1}^{N-1} \tau_i \otimes \tau_i={\rm Id} -\nu \otimes \nu$ 
as announced.
\end{proof}

We can now complete the proof of our second main theorem.

\begin{proof}[Proof of Theorem \ref{thm:main}]

i) This point is a consequence of Lemma \ref{lem:conv-bdry}.

ii) Using that $\nu_{u_*}=\pm \nu_\O$ $\HH^{N-1}$-a.e. in $\partial \O \cap \widehat J_{u_*}$ and gathering Lemmas \ref{lem:intpt} and \ref{lem:bdry} yields
$\overline A={\rm Id}-\nu_{u_*} \otimes \nu_{u_*} \quad \HH^{N-1}\text{-a.e. in }\widehat J_{u_*}$. 
Thus, according to \eqref{eq:desint} and \eqref{eq:defAbar}, we get that
$$\int_{\widehat J_{u_*}} \overline A : D X\, \dd\HH^{N-1}=\int_{\widehat J_{u_*}} ({\rm Id}-\nu_{u_*}\otimes \nu_{u_*}) : D X\, \dd\HH^{N-1}.$$
Then Lemma \ref{lem:varifold} implies that 
\begin{multline*}
\int_\O \left( |\nabla u_*|^2 {\rm Id}-2\nabla u_*\otimes \nabla u_* \right):DX\dd x+\int_{\widehat J_{u_*}} ({\rm Id}-\nu_{u_*} \otimes \nu_{u_*}):D X\, \dd\HH^{N-1}\\
= -\int_{\partial\O} (X\cdot\nu_\O) \dd\lambda_*+\int_{\partial\O} |\nabla_\tau g|^2(X\cdot\nu_\O) \dd \HH^{N-1}-2\int_{\partial\O} (\nabla u_*\cdot \nu_\O) (X_\tau\cdot\nabla_\tau g)\dd\HH^{N-1}
\end{multline*}
for all $X \in \C^1_c(\R^N;\R^N)$. Specifying this identity  to  vector fields $X \in \C^1_c(\R^N;\R^N)$ satisfying $X\cdot\nu_\O=0$ on $\partial\O$ leads to
\begin{multline*}
\int_\O \left( |\nabla u_*|^2 {\rm Id}-2\nabla u_*\otimes \nabla u_* \right):DX\dd x+\int_{\widehat J_{u_*}} ({\rm Id}-\nu_{u_*} \otimes \nu_{u_*}):D X\, \dd\HH^{N-1}\\
=-2\int_{\partial\O} (\nabla u_*\cdot \nu_\O) (X\cdot\nabla_\tau g)\dd\HH^{N-1}\,,
\end{multline*} 
and \eqref{eq:first-var3} follows from the definition of the tangential divergence of $X$ on the countably $\HH^{N-1}$-rectifiable set $\widehat J_{u_*}$.
\end{proof}

The results of this section also give the following convergences that will be used in Section \ref{sec:passing_limit_2nd_variation}.

\begin{corollary}\label{prop:measures_conv}
Let \( (u_\e,v_\e)\in \mathcal A_g(\O) \),  $w_\e$ given by \eqref{defweps}, and $u_* \in SBV^2(\O) \cap L^\infty(\O)$ be as in Theorem~\ref{thm:main}. Then,
\begin{equation}\label{eq:measures_conv_1}
\frac{\nabla w_\e}{|\nabla w_\e|} \otimes \frac{\nabla w_\e}{|\nabla w_\e|} |\nabla w_\e| \LL^N\res\O \wsto \nu_{u_*} \otimes \nu_{u_*}   \mathcal{H}^{N-1}\res \widehat{J} _{u_*}  \quad \text{ weakly* in } \mathcal{M}(\overline{\O};\mathbf M^{N \times N})\,,
\end{equation}
\begin{equation}\label{eq:measures_conv_2}
\e \nabla v_\e \otimes \nabla v_\e \LL^N\res\O \wsto \frac{1}{2}\nu_{u_*} \otimes \nu_{u_*} \mathcal{H}^{N-1}\res \widehat{J} _{u_*}  \quad \text{ weakly* in } \mathcal{M}(\overline{\O};\mathbf M^{N \times N})\,. 
\end{equation}
\end{corollary}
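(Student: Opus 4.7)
The strategy is to deduce both convergences from the already-established weak* convergence
\[A_\e |\nabla w_\e| \LL^N\res\O \wsto \overline A\, \HH^{N-1}\res \widehat{J}_{u_*} \quad \text{in } \M(\overline\O;\mathbf M^{N\times N}), \qquad A_\e := \mathrm{Id} - \tfrac{\nabla w_\e}{|\nabla w_\e|}\otimes \tfrac{\nabla w_\e}{|\nabla w_\e|}\]
recorded in \eqref{eq:crucial}, together with the identification $\overline A = \mathrm{Id} - \nu_{u_*}\otimes \nu_{u_*}$ obtained by combining Lemmas \ref{lem:intpt} and \ref{lem:bdry}, and the equipartition of phase-field energy \eqref{eq:equi-rep}.

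For \eqref{eq:measures_conv_1}, the plan is to write the trivial algebraic identity
\[\tfrac{\nabla w_\e}{|\nabla w_\e|}\otimes \tfrac{\nabla w_\e}{|\nabla w_\e|}\, |\nabla w_\e| = |\nabla w_\e|\,\mathrm{Id} - A_\e |\nabla w_\e|\]
on $\{\nabla w_\e \neq 0\}$ and extend by zero elsewhere. By Proposition \ref{prop:conv_abs_w_equi_repartition} the first term converges weakly* in $\M(\overline\O)$ to $\mathrm{Id}\,\HH^{N-1}\res \widehat J_{u_*}$, while \eqref{eq:crucial} together with the identification of $\overline A$ yields $(\mathrm{Id}-\nu_{u_*}\otimes \nu_{u_*})\,\HH^{N-1}\res \widehat J_{u_*}$ as the weak* limit of the second term. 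Subtracting gives \eqref{eq:measures_conv_1}.

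For \eqref{eq:measures_conv_2}, the key observation is that, since $\nabla w_\e = (1-v_\e)\nabla v_\e$ with $1-v_\e \geq 0$ by the maximum principle (Lemma \ref{lem:max_princ_for_u}), one has $\{\nabla w_\e \neq 0\} = \{v_\e <1,\ \nabla v_\e \neq 0\}$ up to a Lebesgue negligible set, and on this set $\tfrac{\nabla w_\e}{|\nabla w_\e|} = \tfrac{\nabla v_\e}{|\nabla v_\e|}$. Therefore, a.e.\ in $\O$,
\[\e\,\nabla v_\e \otimes \nabla v_\e = \e|\nabla v_\e|^2 \, \tfrac{\nabla w_\e}{|\nabla w_\e|}\otimes \tfrac{\nabla w_\e}{|\nabla w_\e|} = \tfrac{1}{2}|\nabla w_\e|\, \tfrac{\nabla w_\e}{|\nabla w_\e|}\otimes \tfrac{\nabla w_\e}{|\nabla w_\e|} + R_\e,\]
where the remainder $R_\e := \tfrac{1}{2}(2\e|\nabla v_\e|^2 - |\nabla w_\e|)\, \tfrac{\nabla w_\e}{|\nabla w_\e|}\otimes \tfrac{\nabla w_\e}{|\nabla w_\e|}$ satisfies $|R_\e| \leq \tfrac{1}{2}|2\e|\nabla v_\e|^2 - |\nabla w_\e||$, hence tends to $0$ in $L^1(\O)$ by \eqref{eq:equi-rep}. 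Applying \eqref{eq:measures_conv_1} to the main term, the sum converges weakly* to $\tfrac{1}{2}\nu_{u_*}\otimes\nu_{u_*}\, \HH^{N-1}\res \widehat J_{u_*}$, which is \eqref{eq:measures_conv_2}.

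The only delicate point is to justify the pointwise identity $\tfrac{\nabla w_\e}{|\nabla w_\e|} = \tfrac{\nabla v_\e}{|\nabla v_\e|}$ on the appropriate set; this relies on the classical fact that $\nabla v_\e = 0$ a.e.\ on $\{v_\e = 1\}$, so that $\{\nabla v_\e \neq 0\}$ and $\{\nabla w_\e \neq 0\}$ coincide up to a null set and both tensor products vanish identically outside. The rest amounts to linear combinations of already proved weak* convergences modulo an $L^1$-small error, so no further compactness argument is needed.
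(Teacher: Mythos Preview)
Your proposal is correct and follows essentially the same approach as the paper: for \eqref{eq:measures_conv_1} the paper also combines \eqref{eq:crucial} with the identification $\overline A=\mathrm{Id}-\nu_{u_*}\otimes\nu_{u_*}$ from Lemmas~\ref{lem:intpt} and~\ref{lem:bdry} (your algebraic identity $\tfrac{\nabla w_\e}{|\nabla w_\e|}\otimes\tfrac{\nabla w_\e}{|\nabla w_\e|}\,|\nabla w_\e|=|\nabla w_\e|\,\mathrm{Id}-A_\e|\nabla w_\e|$ simply makes explicit the subtraction the paper leaves implicit), and for \eqref{eq:measures_conv_2} the paper likewise invokes the first part, the equipartition \eqref{eq:equi-rep}, and the identity $\tfrac{\nabla v_\e}{|\nabla v_\e|}=\tfrac{\nabla w_\e}{|\nabla w_\e|}$. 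Your treatment is just a more detailed version of the paper's terse argument.
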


\begin{proof}
The first point \eqref{eq:measures_conv_1} follows from \eqref{eq:crucial} together with Lemmas \ref{lem:intpt} and \ref{lem:bdry} by observing that  $\nu_{u_*}=\pm \nu_\O$ $\HH^{N-1}$-a.e. in $\partial \O \cap \widehat J_{u_*}$. The second point \eqref{eq:measures_conv_2} follows from the first one, the {equipartition} of energy \eqref{eq:equi-rep} and the fact that $\frac{\nabla v_\e}{|\nabla v_\e|}=\frac{\nabla w_\e}{|\nabla w_\e|}$.
\end{proof}

\section{Passing to the limit in the second inner variation}\label{sec:passing_limit_2nd_variation}\label{sec:inner_stability}

The aim of this section is to complement Theorem \ref{thm:main} proving also the convergence of the second inner variation of $AT_\e$. As a consequence, we shall deduce that if the limit $u_*$ comes from stable critical points of $AT_\e$, then $u_*$ satisfies a certain stability condition for $MS$.  
Our analysis and result parallels completely the ones in \cite{Le_2011,Le_2015,Le_Sternberg_2019} for the Allen-Cahn type energies arising in phase transitions problems. 

\begin{proof}[Proof of Theorem \ref{thm:main_2}]
Assume that $\partial\O$ is of class $\C^{3,1}$ and $g \in \C^{3,\alpha}(\partial\O)$ for some $\alpha \in (0,1)$. By Theorem \ref{thembdregloc}, if $(u_\e,v_\e)$ is a critical points of the Ambrosio-Tortorelli functional then it belongs to $[\C^{3,\alpha}(\overline\O)]^2$. 
To prove the convergence of the second inner variation, we  use Lemma \ref{lem:expressions_inner} and formula  \eqref{eq:2nd_inner_variation}. From Proposition \ref{prop:conv_abs_w_equi_repartition}, we know that 
$$\begin{cases}
\sqrt{\eta_\e+v_\e^2}\nabla u_\e \rightarrow \nabla u_* & \text{ strongly in }L^2(\O;\R^N)\,,\\[5pt]
\displaystyle \e |\nabla v_\e|^2 \LL^N\res\O \wsto \frac12 \mathcal{H}^{N-1}\res \widehat J_{u_*} & \text{ weakly* in }\M(\overline\O)\,,\\[5pt]
\displaystyle \frac{(v_\e-1)^2}{4\e} \LL^N\res\O \wsto \frac12 \mathcal{H}^{N-1}\res \widehat J_{u_*} & \text{ weakly* in }\mathcal{M}(\overline \O)\,.
\end{cases}$$
On the other hand, Corollary \ref{prop:measures_conv} ensures that
$\e\nabla v_\e \otimes \nabla v_\e \LL^N \res\O \wsto \frac12\nu_{u_*} \otimes \nu_{u_*} \mathcal{H}^{N-1}\res \widehat J_{u_*}$ weakly* in  $\M(\overline\O;\mathbf M^{N \times N})$. 
Let $X \in \C^2_c(\R^N;\R^N)$ and $G \in \C^2(\R^N)$ be such that $X\cdot \nu_\O=0$ and $G=g$ on $\partial\O$, and set $Y:=(DX)X$. Observing that \( |D X^T\nabla v_\e|^2=(DX (D X)^T) :(\nabla v_\e \otimes \nabla v_\e)\), we can pass to the limit in all the terms of \(\delta^2AT_\e(u_\e,v_\e)[X,G]\) in \eqref{eq:2nd_inner_variation} to find that
\begin{eqnarray}\label{conv-var2}
\lim_{\e \to 0} \delta^2 AT_\e(u_\e,v_\e)[X,G] 
 & = & \int_\O \left( |\nabla u_*|^2{\rm Id}-2(\nabla u_*\otimes \nabla u_*) \right):DY \dd x +\int_{\widehat{J}_{u_*}} \dive^{\widehat{J}_{u_*}}Y \dd \HH^{N-1} \nonumber \\
&& +\int_\O |\nabla u_*|^2\left( (\dive X)^2-\tr (DX)^2 \right) -4\big((\nabla u_*\otimes \nabla u_*) :DX\big) \dive X \dd x \nonumber \\
&&+\int_\O \left[4(\nabla u_*\otimes \nabla u_*): (DX)^2+2 |DX^T\nabla u_*|^2\right] \dd x \nonumber \\
&&+4 \int_\O \Big[\nabla u_*\cdot \nabla (X\cdot \nabla G) \dive X \nonumber\\
&&\hspace{3cm}-\big(\nabla u_*\otimes \nabla (X\cdot \nabla G)\big) :\big(DX+(DX)^T\big)\Big] \dd x\nonumber\\
&&+2 \int_\O \nabla u_*\cdot \nabla (X\cdot \nabla (X\cdot G)) \dd x +2\int_\O |\nabla (X\cdot \nabla G)|^2 \dd x\nonumber\\
&&+\int_{\widehat{J}_{u_*}} \left[(\dive X)^2-\tr (DX)^2  -2 \big((\nu_{u_*} \otimes \nu_{u_*}):DX\big) \dive X \right]\dd \HH^{N-1} \nonumber \\
&&+2\int_{\widehat{J}_{u_*}} \left[(\nu_{u_*} \otimes \nu_{u_*}): (DX)^2+|D X^T \nu_{u_*}|^2 \right]\dd \HH^{N-1}\,.
\end{eqnarray}
Using the geometric formulas stated in the proof of Theorem 1.1 p.\ 1851--1852 in \cite{Le_2011}, we infer that
\begin{multline}\label{eq:computation_div_proj}
(\dive X)^2-\tr[(DX)^2]-2 \big((\nu_{u_*} \otimes \nu_{u_*}):D X\big) \dive X+2 (\nu_{u_*} \otimes \nu_{u_*}): (D X)^2+|D X^T \nu_{u_*}|^2 \\
= (\dive^{J_{u_*}} X)^2+\sum_{i=1}^{N-1}|(\partial_{\tau_i}X)^\perp|^2- \sum_{i,j=1}^{N-1} \left( \tau_i\cdot \partial_{\tau_j} X \right) \left( \tau_j \cdot \partial_{\tau_i} X \right) +\left( (\nu_{u_*} \otimes \nu_{u_*}):DX\right)^2\,.
\end{multline}
According to the expression of the inner second variation of the Mumford-Shah energy stated in Lemma~\ref{lem:explicit_inner_stab2}, \eqref{conv-var2} and \eqref{eq:computation_div_proj}, we infer that
$$\lim_{\e \to 0} \delta^2 AT_\e(u_\e,v_\e)[X,G] =\delta^2 MS(u_*)[X,G]+\int_{\widehat J_{u_*}} \big( (\nu_{u_*} \otimes \nu_{u_*}):DX \big)^2\dd \HH^{N-1}\,.$$

Now assume  that \((u_\e,v_\e)\in \mathcal A_g(\O)\) is a stable critical point of \(AT_\e\), i.e.
\begin{equation}\label{stabinsect}
\dd^2 AT_\e(u_\e,v_\e)[\phi,\psi] \geq 0 \quad \text{ for all } (\phi ,\psi) \in H^1_0(\O)\times[H^1_0(\O)\cap L^\infty(\O)]\,,
\end{equation}
where $\dd^2 AT_\e(u_\e,v_\e)$ is the second outer variation of $AT_\e$ at $(u_\e,v_\e)$ given by  formula \eqref{eq:ext-var2}.

Let us fix an arbitrary vector field $X \in \C^2_c(\R^N;\R^N)$ and an arbitrary function  $G\in \C^3(\R^N)$ satisfying $X\cdot \nu_\O=0$ and $G=g$ on $\partial\O$. 
According to Lemma \ref{lem:lien_inner_outer}, we have 
\begin{multline*}
\delta^2 AT_\e(u_\e,v_\e)[X,G]=\dd^2AT_\e(u_\e,v_\e)[X\cdot \nabla (u_\e-G), X\cdot \nabla v_\e]\\
+\dd AT_\e (u_\e,v_\e)[X\cdot \nabla (X\cdot \nabla (u_\e-G)),X\cdot \nabla (X\cdot \nabla v_\e)]\,.
\end{multline*}
Since $(u_\e,v_\e)=(g,1)$  and $X\cdot\nu_\O=0$ on $\partial\O$,  we  have $X\cdot\nabla (u_\e-G) =X\cdot \nabla v_\e=0$ on $\partial \O$. 
As a consequence, the functions $X\cdot \nabla (u_\e-G)$ and $X\cdot \nabla v_\e$ belong to $\C^{2}(\overline \O)$ and also vanish on $\partial\O$. Therefore, 
$X\cdot \nabla( X\cdot\nabla (u_\e-G))=X\cdot \nabla (X \cdot \nabla v_\e)=0$  on $\partial\O$. 
Next, $(u_\e,v_\e)$ being a critical point of $AT_\e$, we have 
$$\dd AT_\e (u_\e,v_\e)[X\cdot \nabla (X\cdot \nabla (u_\e-G)),X\cdot \nabla (X\cdot \nabla v_\e)=0\,.$$
Back to \eqref{stabinsect}, it follows that 
$$\delta^2AT_\e (u_\e,v_\e)[X,G]=\dd^2AT_\e(u_\e,v_\e)[ X\cdot \nabla (u_\e-G), X\cdot \nabla v_\e]\geq 0\,.$$
Passing now to the limit in the second inner  variation yields \eqref{secordmincritthm}, and the proof of Theorem \ref{thm:main_2} is now complete.
\end{proof}

\begin{remark}
{\rm In \cite{Cagnetti_Mora_Morini_2008,Bonacini_Morini_2015}, the authors explore  second order minimality conditions for the Mumford-Shah functional in the case where the jump set is regular enough. Such conditions could be derived in our context, taking care of the Dirichlet boundary data and thus of the fact that the jump set can charge the boundary. We do not develop this point here and refer to \cite[Theorem 3.6]{Cagnetti_Mora_Morini_2008} where the authors provide another expression for \(\delta^2MS(u)\) defined for smooth vector fields $X$ compactly supported  in~$\O$ (see Remark \ref{remgendeformations}). But we indicate that, as a consequence of Theorem \ref{thm:main_2}, it can be seen that, if \( (u_\e,v_\e)\in \A_g(\O) \) is a stable critical point of \(AT_\e\) such that, up to a subsequence, \( u_\e \to u_* \) in \(L^2(\O)\) and \eqref{eq:hypothesis} hold, then \(u_*\) satisfies the second order minimality condition for the Mumford-Shah functional derived in \cite{Cagnetti_Mora_Morini_2008,Bonacini_Morini_2015}) provided $\widehat J_{u_*}$ is sufficiently smooth. This follows by choosing \(X \in \C^\infty_c(\R^N;\R^N)\) of the form \( X=  \varphi\nu_{u_*} \circ \Pi_{\widehat{J}_{u_*}} \) in a neighborhood of \(\ \widehat{J}_{u_*}\) and satisfying \(\nu_{u_*} \cdot (DX \nu_{u_*} )=0 \) on \(\widehat{J}_{u_*}\), where \(\Pi_{\widehat{J}_{u_*}}\) denotes the nearest point projection onto \(\widehat{J}_{u_*}\) and $\varphi$ is an arbitrary smooth scalar function.}
\end{remark}

\section*{Appendix: First and second variations}
\setcounter{theorem}{0}
\setcounter{section}{1}

\renewcommand{\thesection}{\Alph{section}}

In this appendix we derive explicit expressions for outer and inner variations of the Ambrosio-Tortorelli and Mumford-Shah  functionals. 
First, we recall the expression of the first and second outer variations of $AT_\e$ are defined by  \eqref{1outvar} and  \eqref{2outvar}. 

\begin{lemma}\label{lem:outer-var}
Let $\O \subset \R^N$ be a bounded open set with Lipschitz boundary and $g \in H^{\frac12}(\partial\O)$. For all $(u,v) \in \mathcal A_g(\O)$ and all $(\phi,\psi) \in H_0^1(\O)\times [H_0^1(\O)\cap L^\infty(\O)]$,
\begin{eqnarray}
\dd AT_\e(u,v)[\phi,\psi]& =& 2\int_\O  \left[(\eta_\e+v^2)\nabla u \cdot \nabla \phi +\e \nabla v\cdot \nabla \psi+ |\nabla u|^2v\psi +\frac{(v-1)\psi}{4\e}\right] \dd x,\nonumber\\
\dd^2 AT_\e(u,v)[\phi,\psi]& = &  \int_\O  \left[(\eta_\e+v^2)|\nabla \phi|^2+4v\psi\nabla u \cdot \nabla \phi +\e |\nabla \psi|^2+|\nabla u|^2\psi^2+\frac{\psi^2}{4\e}\right] \dd x.\label{eq:ext-var2}
\end{eqnarray}
\end{lemma}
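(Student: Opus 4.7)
The lemma follows from a direct Taylor expansion. The key observation is that, for fixed $(u,v)\in\A_g(\O)$ and $(\phi,\psi)\in H^1_0(\O)\times[H^1_0(\O)\cap L^\infty(\O)]$, the map $t\mapsto AT_\e(u+t\phi,v+t\psi)$ is a \emph{polynomial} in $t$ of degree at most $4$. Indeed, $(\eta_\e+(v+t\psi)^2)|\nabla u+t\nabla\phi|^2$ is polynomial of degree $4$ in $t$ whose coefficients are in $L^1(\O)$ (thanks to $v,\psi\in L^\infty(\O)$ and $\nabla u,\nabla\phi\in L^2(\O;\R^N)$); the two phase-field integrands are polynomial of degree $2$ in $t$ with $L^1(\O)$ coefficients as well. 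Hence differentiation under the integral sign is justified unconditionally, and $\dd AT_\e$ (resp.\ $\dd^2 AT_\e$) is obtained by reading off the coefficient of $t$ (resp.\ twice the coefficient of $t^2$) in each integrand.

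For $\dd AT_\e$, I would expand $(\eta_\e+v^2+2tv\psi+t^2\psi^2)(|\nabla u|^2+2t\nabla u\cdot\nabla\phi+t^2|\nabla\phi|^2)$ to read off the linear coefficient $2(\eta_\e+v^2)\nabla u\cdot\nabla\phi+2v\psi|\nabla u|^2$. The Dirichlet term $\e|\nabla v+t\nabla\psi|^2$ contributes $2\e\nabla v\cdot\nabla\psi$, and the double-well term $(v+t\psi-1)^2/(4\e)$ contributes $(v-1)\psi/(2\e)$. Summing and integrating then yields the stated first-variation formula.

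For $\dd^2 AT_\e$, I would apply the Leibniz rule $(AB)''(0)=A''(0)B(0)+2A'(0)B'(0)+A(0)B''(0)$ with $A(t):=\eta_\e+(v+t\psi)^2$ and $B(t):=|\nabla u+t\nabla\phi|^2$, using $A(0)=\eta_\e+v^2$, $A'(0)=2v\psi$, $A''(0)=2\psi^2$, $B(0)=|\nabla u|^2$, $B'(0)=2\nabla u\cdot\nabla\phi$, $B''(0)=2|\nabla\phi|^2$; this produces the bulk contribution. The phase-field second derivatives $2\e|\nabla\psi|^2$ and $\psi^2/(2\e)$ are immediate. Summing and integrating gives the stated expression. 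There is no real analytical obstacle here; the only work is careful algebraic bookkeeping, which is trivially automated by the polynomial structure of the integrands.
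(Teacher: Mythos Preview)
The paper does not give a proof of this lemma; it is stated in the Appendix as a routine computation and the text moves directly to inner variations. Your approach---observing that $t\mapsto AT_\e(u+t\phi,v+t\psi)$ is a polynomial of degree at most $4$ with $L^1(\O)$ coefficients (using $v,\psi\in L^\infty$ and $\nabla u,\nabla\phi\in L^2$), so that differentiation under the integral is automatic---is correct and is exactly the natural argument. One small remark: your computation for $\dd^2 AT_\e$ produces $2\big[(\eta_\e+v^2)|\nabla\phi|^2+4v\psi\nabla u\cdot\nabla\phi+\e|\nabla\psi|^2+|\nabla u|^2\psi^2+\psi^2/(4\e)\big]$, i.e.\ with an overall factor $2$ that is missing in the paper's displayed formula~\eqref{eq:ext-var2}; this is a harmless typo in the statement, not an error in your reasoning.
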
 

The computations of inner variations rely on {one}-parameter groups of diffeomorphisms over $\overline\O$, or equivalently on their infinitesimal generators. More precisely, assuming that $\partial\O$ is of class $\C^{k+1}$ with $k\geq 1$, and given a vector field $X\in \C^k_c(\mathbb{R}^N;\mathbb{R}^N)$ satisfying $X\cdot\nu_\O =0$ on $\partial\O$, we consider the integral flow $\{\Phi_t\}_{t\in\R}$ of $X$  defined through the resolution of the ODE \eqref{eq:la_coulée} for every $x\in \R^N$.  Then $\Phi_0={\rm id}$ and the flow rule asserts that $\Phi_{t+s}=\Phi_t\circ\Phi_s$. Since $X\cdot\nu_\O =0$ on $\partial\O$, $\{\Phi_t\}_{t\in\R}$ is a {one}-parameter group of $\C^k$-diffeomorphism from $\O$ into itself, and from $\partial\O$ into itself. 

Given a (sufficiently smooth) boundary data $g$, we consider an arbitrary (smooth) extension $G$ of $g$ to $\overline\O$ to define a one-parameter family of deformations  $\{(u_t,v_t)\}_{t\in\R}\subset \mathcal{A}_g(\O)$ satisfying $(u_0,v_0)=(u,v)$ for a given pair $(u,v)\in \mathcal{A}_g(\O)$ by setting $u_t:=u\circ \Phi_{-t} -G\circ\Phi_{-t}+G$ and  $v_t:=v\circ\Phi_{-t}$. The first and second inner variations $\delta AT_\e$ and  $\delta^2 AT_\e$ of $AT_\e$ at $(u,v)$ are then defined by \eqref{eq:def_delta2}. 

\begin{remark}\label{remgendeformations}
{\rm
We emphasize that $\delta AT_\e(u,v)[X,G]$ and $\delta^2 AT_\e(u,v)[X,G]$ depend on both the vector field $X$ and the extension $G$ of the boundary data $g$, because the family of deformations $\{(u_t,v_t)\}_{t\in\R}$ depends on $X$ and $G$. It allows {one} to perform inner variations of the energy {\sl up to the boundary}. This type of deformations includes the more usual variation $\{(u\circ \Phi_{-t},v\circ{\Phi_{-t}}\}_{t\in \R}$ with $X$ compactly supported in~$\O$. Indeed, in this case we may choose an extension $G$ supported in a small neighborhood of $\partial\O$ in such a way that ${\rm supp}\,G\cap{\rm supp}\,X=\emptyset$. Then $G\circ \Phi_{-t}=G$, and thus $u_t=u\circ \Phi_{-t}$. }
\end{remark}

If the pair $(u,v)$ and $\partial \O$ are smooth enough, one can compute the first and second inner variations of $AT_\e$ at $(u,v)$ using the Taylor expansion of $(u_t,v_t)$ with respect to the parameter $t$. One may for instance follow the general setting of \cite[Lemma 2.2 and Corollary 2.3]{Le_Sternberg_2019}.

\begin{lemma}\label{lem:lien_inner_outer}
Let $\O \subset \R^N$ be a bounded open set with  boundary of class $\C^2$, $g \in \C^{2}(\partial\O)$ and $(u,v)\in\mathcal{A}_g(\O)\cap[\C^2(\overline\O)]^2$.
\begin{enumerate}
\item[(i)]  Then for every {vector field }$X\in \C^1_c(\R^N;\R^N)$ and {every extension} $G\in \C^2(\R^N)$ satisfying $X\cdot\nu_\O=0$ and $G=g$ on $\partial\O$,
$$\delta AT_\e (u,v)[X,G]=-\dd AT_\e(u,v)[X\cdot \nabla (u-G),X\cdot\nabla v].$$

\vskip3pt

\item[(ii)] If {further} $\partial\O$ is of class $\C^3$, $g \in \C^{3}(\partial\O)$, and  $(u,v)\in\mathcal{A}_g(\O)\cap[\C^3(\overline\O)]^2$, then for every {vector field } $X\in \C^2_c(\R^N;\R^N)$ and {every extension} $G\in \C^3(\R^N)$ satisfying $X\cdot\nu_\O=0$ and $G=g$ on~$\partial\O$,
\begin{align*}
\delta^2 AT_\e(u,v)[X,G]=&\,\dd^2AT_\e(u,v)[X\cdot \nabla (u-G),X\cdot\nabla v]\\
&\qquad+\dd AT_\e(u,v)\big[X\cdot\nabla(X\cdot\nabla(u-G)),X\cdot\nabla(X\cdot\nabla v)\big]\,.
\end{align*}
\end{enumerate}
\end{lemma}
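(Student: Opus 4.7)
The plan is to Taylor-expand the one-parameter family $(u_t,v_t)$ in $t$ at $t=0$, substitute the expansion into the polynomial functional $(\phi,\psi)\mapsto AT_\e(u+\phi,v+\psi)$, and read off the coefficients of $t$ and $t^2$. Since $AT_\e$ is a polynomial of degree four in its arguments, this reduces the lemma to the Faà di Bruno formula applied at second order.

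First, I would differentiate the flow ODE \eqref{eq:la_coulée} to obtain $\tfrac{d}{dt}\Phi_{-t}|_{t=0}=-X$ and $\tfrac{d^2}{dt^2}\Phi_{-t}|_{t=0}=(DX)X$. By the chain rule, for any $f\in\C^2(\overline\O)$,
\begin{equation*}
\tfrac{d}{dt}(f\circ\Phi_{-t})\bigl|_{t=0}=-X\cdot\nabla f,\qquad\tfrac{d^2}{dt^2}(f\circ\Phi_{-t})\bigl|_{t=0}=(DX)X\cdot\nabla f+D^2 f(X,X)=X\cdot\nabla(X\cdot\nabla f),
\end{equation*}
the last equality being immediate from $X\cdot\nabla(X_i\partial_i f)=X_j(\partial_j X_i)\partial_i f+X_iX_j\partial^2_{ij}f$. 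Applied to $u_t=u\circ\Phi_{-t}-G\circ\Phi_{-t}+G$ and $v_t=v\circ\Phi_{-t}$, this would yield
\begin{equation*}
u_t=u-t\,X\cdot\nabla(u-G)+\tfrac{t^2}{2}X\cdot\nabla\bigl(X\cdot\nabla(u-G)\bigr)+o(t^2),\qquad v_t=v-t\,X\cdot\nabla v+\tfrac{t^2}{2}X\cdot\nabla(X\cdot\nabla v)+o(t^2),
\end{equation*}
valid in $\C^0(\overline\O)$ for (i) and even in $H^1(\O)\cap\C^0(\overline\O)$ under the stronger regularity of (ii) (where a third-order Taylor estimate on $\Phi_{-t}$ is available).

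Next I would check that these Taylor coefficients are admissible test functions for $\dd AT_\e(u,v)$ and $\dd^2 AT_\e(u,v)$. The regularity is immediate from the hypotheses, so only the boundary trace matters: since $u=g=G$ on $\partial\O$ and $X\cdot\nu_\O=0$, the field $X$ is tangential to $\partial\O$, so $X\cdot\nabla(u-G)$ is the tangential derivative along $X$ of a function vanishing on $\partial\O$, hence itself vanishes on $\partial\O$. The same argument applied to $v\equiv1$ on $\partial\O$ yields $X\cdot\nabla v=0$ on $\partial\O$, and one more iteration (using the admissibility just established) disposes of the second-order coefficients in (ii). Having this, I would apply the Taylor expansion
\begin{equation*}
AT_\e(u+\phi,v+\psi)=AT_\e(u,v)+\dd AT_\e(u,v)[\phi,\psi]+\tfrac12\dd^2 AT_\e(u,v)[\phi,\psi]+\text{(cubic and quartic terms in $(\phi,\psi)$)},
\end{equation*}
substitute $(\phi,\psi)=(u_t-u,v_t-v)$ and keep only terms of order $\le t^2$, arriving at
\begin{equation*}
AT_\e(u_t,v_t)=AT_\e(u,v)+t\,\dd AT_\e(u,v)[\dot u_0,\dot v_0]+\tfrac{t^2}{2}\bigl\{\dd AT_\e(u,v)[\ddot u_0,\ddot v_0]+\dd^2 AT_\e(u,v)[\dot u_0,\dot v_0]\bigr\}+o(t^2),
\end{equation*}
with $(\dot u_0,\dot v_0)=(-X\cdot\nabla(u-G),-X\cdot\nabla v)$ and $(\ddot u_0,\ddot v_0)=(X\cdot\nabla(X\cdot\nabla(u-G)),X\cdot\nabla(X\cdot\nabla v))$. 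Linearity of $\dd AT_\e(u,v)$ in its argument, together with the quadratic (hence sign-insensitive) nature of $\dd^2 AT_\e(u,v)$, delivers (i) and (ii) upon identifying the coefficients of $t$ and $t^2$ respectively.

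No serious obstacle is expected: the whole argument is a bookkeeping exercise combining the elementary Taylor expansion of a composition with the polynomial structure of $AT_\e$. The only subtle point is the iterative tangential-trace argument ensuring that the second-order directions $X\cdot\nabla(X\cdot\nabla(u-G))$ and $X\cdot\nabla(X\cdot\nabla v)$ still belong to $H^1_0(\O)$, which is precisely what dictates the stronger $\C^3$ regularity assumed in (ii).
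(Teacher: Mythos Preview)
Your proposal is correct and follows essentially the same route as the paper's proof: both compute $\dot u_0,\dot v_0,\ddot u_0,\ddot v_0$ via the chain rule along the flow $\Phi_{-t}$ and then invoke the second-order chain rule $\tfrac{d^2}{dt^2}AT_\e(u_t,v_t)=\dd^2AT_\e(u_t,v_t)[\dot u_t,\dot v_t]+\dd AT_\e(u_t,v_t)[\ddot u_t,\ddot v_t]$ evaluated at $t=0$. Your version is in fact slightly more careful than the paper's, since you explicitly verify that the directions $X\cdot\nabla(u-G)$, $X\cdot\nabla v$ and their iterates vanish on $\partial\O$ (a point the paper defers to the proof of Theorem~\ref{thm:main_2}).
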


\begin{proof}
Define $\bar u_t:=u\circ \Phi_{-t}-G\circ\Phi_{-t}$. Since $(u,v)$ and $G$ belong to $\C^2(\overline\O)$, we can differentiate $(u_t,v_t)$ with respect to $t$ and use \eqref{eq:la_coulée} with the flow rule $\Phi_{t+s}=\Phi_t\circ\Phi_s$ to find 
$$(\dot u_t,\dot v_t):=\frac{\dd}{\dd t}(u_t,v_t)=\frac{\dd}{\dd s}\Bigl|_{s=0} (\bar u_{t+s},v_{t+s})= \big(-X\cdot\nabla\bar u_t,  -X\cdot\nabla v_t \big) \in [\C^1(\overline\O)]^2\,.$$
In particular, we have
\begin{equation}\label{calcdotuv}
(\dot u_0,\dot v_0)=\big(-X\cdot\nabla (u-G), -X\cdot\nabla v\big)\,.
\end{equation}
If $\partial\O$ is of class $\mathscr C^3$, $g \in \mathscr C^{3}(\partial\O)$, and  $(u,v)\in\mathcal{A}_g(\O)\cap[\mathscr C^3(\overline\O)]^2$, then we can differentiate $(\dot u_t,\dot v_t)$ with respect to $t$ to obtain  
$$\ddot u_t:=\frac{\dd}{\dd t}\dot u_t=-X\cdot\nabla \dot u_t =X\cdot\nabla\big(X\cdot\nabla \bar u_t\big)\quad\text{and}\quad \ddot v_t:=\frac{\dd}{\dd t}\dot v_t=-X\cdot\nabla \dot v_t =X\cdot\nabla\big(X\cdot\nabla v_t\big)\,.$$
Hence, 
\begin{equation}\label{calcddotu}
\ddot u_0=X\cdot\nabla\big(X\cdot\nabla(u-G)\big)\quad\text{and}\quad \ddot v_0=X\cdot\nabla\big(X\cdot\nabla v\big)\,.
\end{equation}
Next, elementary computations yield
$$\frac{\dd}{\dd t}AT_\e(u_t,v_t)=\dd AT_\e(u_t,v_t)[\dot u_t,\dot v_t]\,, $$
and 
$$\frac{\dd^2}{\dd t^2}AT_\e(u_t,v_t)=\dd^2AT_\e(u_t,v_t)[\dot u_t,\dot v_t]+\dd AT_\e(u_t,v_t)[\ddot u_t,\ddot v_t]\,,  $$
so that the conclusion follows from \eqref{calcdotuv}-\eqref{calcddotu} evaluating those derivatives at $t=0$. 
\end{proof}

In case the pair $(u,v)$ only belongs to the energy space $\mathcal{A}_g(\O)$, we can compute the first and second inner variations of $AT_\e$ by making the change variables  $y=\Phi_t(x)$ in the integrals defining $AT_\e(u_t,v_t)$. Then one expands the result with respect to $t$ using a Taylor expansion of $\Phi_t$.  If $X\in \C^2_c(\R^N;\R^N)$, the  second order Taylor expansion  near $t=0$ of the flow map $\Phi_t$ induced by $X$ is given by 
\begin{equation}\label{expanphit}
\Phi_t={\rm id} + t X+\frac{t^2}{2}Y+o(t^2)\,, 
\end{equation}
where $Y\in \C^1_c(\R^N;\R^N)$ denotes the vector field $Y:=(DX)X$\,, 
$DX$ being the Jacobian matrix of $X$ (i.e., $(DX)_{ij}=\partial_j X_i$ with $i$ the row index and $j$ the column index), and $o(s)$ denotes a quantity satisfying $o(s)/s\to 0$ as $s\to 0$ uniformly with respect to $x\in\R^N$.

\begin{lemma}\label{lem:expressions_inner}
Let $\O \subset \R^N$ be a bounded open set with  boundary of class $\C^2$, $g \in \C^{2}(\partial\O)$ and $(u,v)\in\mathcal{A}_g(\O)$.
\begin{enumerate}
\item[(i)] Then for every {vector field } $X\in \C^1_c(\R^N;\R^N)$ and {every extension} $G\in \C^2(\R^N)$ satisfying $X\cdot\nu_\O=0$ and $G=g$ on $\partial\O$,
\begin{multline}\label{eq:1st_inner_variation}
\delta AT_\e (u,v)[X,G]=\int_\O (\eta_\e+v^2) \Big[|\nabla u|^2 {\rm Id} -2 \nabla u \otimes \nabla u\Big]:D X\dd x\\
+ \int_\O\left[ \left(\e|\nabla v|^2 +\frac{(v-1)^2}{4\e} \right){\rm Id}-2\e \nabla v \otimes \nabla v \right]:D X \dd x +2\int_\O(\eta_\e+v^2)\nabla u\cdot\nabla(X\cdot\nabla G)\,\dd x\,.
\end{multline}
\vskip3pt

\item[(ii)] If {further} $\partial\O$ is of class $\C^3$, $g \in \C^{3}(\partial\O)$, then for every {vector field } $X\in \C^2_c(\R^N;\R^N)$ and {every extension} $G\in \C^3(\R^N)$ satisfying $X\cdot\nu_\O=0$ and $G=g$ on $\partial\O$,
\begin{align}\label{eq:2nd_inner_variation}
\delta^2 AT_\e (u,v)[X,G]=& \int_\O(\eta_\e+v^2)\bigg( |\nabla u|^2{\rm Id}-2(\nabla u\otimes\nabla u)\bigg):DY \dd x \nonumber \\
&+\int_\O(\eta_\e+v^2)\bigg\{|\nabla u|^2\big(({\rm div}X)^2-{\rm tr}((DX)^2)\big) -4\big((\nabla u\otimes\nabla u):DX\big){\rm div} X \nonumber \\
&\qquad\qquad\qquad\qquad\qquad+4(\nabla u\otimes\nabla u):(DX)^2 +2 |DX^T\nabla u|^2\bigg\}\,\dd x \nonumber \\
&+4\int_\O(\eta_\e+v^2)\big[(\nabla u\cdot\nabla (X\cdot \nabla G)){\rm div}X\nonumber\\
&\hspace{4cm}-\big(\nabla u\otimes\nabla (X\cdot\nabla G)\big):\big(DX+(DX)^T\big)\big]\,\dd x \nonumber \\
& +2\int_\O(\eta_\e+v^2)\nabla  u\cdot\nabla (X\cdot\nabla (X\cdot\nabla G))\,\dd x +2\int_\O(\eta_\e+v^2)|\nabla(X\cdot\nabla G)|^2\,\dd x\nonumber\\
&+ \int_\O\bigg[\bigg(\e|\nabla v|^2+\frac{(v-1)^2}{4\e}\bigg){\rm Id}-2\e\nabla v\otimes \nabla v\bigg]:DY \dd x \nonumber \\
&+\int_\O\bigg(\e|\nabla v|^2+\frac{(v-1)^2}{4\e}\bigg)\big(({\rm div}X)^2-{\rm tr}((DX)^2)\big) \dd x \nonumber  \\
&-4\int_\O\e\big((\nabla v\otimes\nabla v):DX\big){\rm div} X\,\dd x+4\int_\O\e(\nabla v\otimes\nabla v):(DX)^2\,\dd x \nonumber \\
&+2\int_\O\e |DX^T\nabla v|^2\,\dd x\,,
\end{align}
with $Y:=(DX)X$. 
\end{enumerate}
\end{lemma}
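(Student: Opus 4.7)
\textbf{Proof proposal for Lemma \ref{lem:expressions_inner}.} The strategy is to perform the change of variable $x=\Phi_t(y)$ in the integrals defining $AT_\e(u_t,v_t)$, so that $\Phi_t$ is transferred from the functions $u,v,G$ to the Jacobian factors, and then Taylor expand those factors in $t$. Writing $\bar u:=u-G$ so that $u_t=\bar u\circ\Phi_{-t}+G$, the chain rule gives
\[
\nabla u_t(\Phi_t(y))=(D\Phi_t(y))^{-T}\nabla\bar u(y)+\nabla G(\Phi_t(y)),\qquad \nabla v_t(\Phi_t(y))=(D\Phi_t(y))^{-T}\nabla v(y),
\]
so that, setting $J_t(y):=|\det D\Phi_t(y)|$ and $A_t(y):=(D\Phi_t)^{-T}(D\Phi_t)^{-1}$,
\begin{align*}
AT_\e(u_t,v_t)=&\int_\O(\eta_\e+v^2)\bigl|(D\Phi_t)^{-T}(\nabla u-\nabla G)+\nabla G(\Phi_t)\bigr|^2 J_t\,dy\\
&+\int_\O\Bigl(\e\, A_t\nabla v\cdot\nabla v+\tfrac{(v-1)^2}{4\e}\Bigr)J_t\,dy.
\end{align*}
The problem then reduces to the Taylor expansion (up to the required order) of the smooth matrix-valued fields $t\mapsto D\Phi_t(y)$, $t\mapsto J_t(y)$, $t\mapsto A_t(y)$, and the vector field $t\mapsto\nabla G(\Phi_t(y))$, and to a bookkeeping of the coefficients of $t$ and $t^2$ in the resulting expression.

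The first-order step proceeds as follows. Using \eqref{expanphit} one has $D\Phi_t=\mathrm{Id}+tDX+O(t^2)$, hence $(D\Phi_t)^{-T}=\mathrm{Id}-t(DX)^T+O(t^2)$, $A_t=\mathrm{Id}-t(DX+(DX)^T)+O(t^2)$, and $J_t=1+t\,\mathrm{div}\,X+O(t^2)$. For the boundary-data term, $\nabla G(\Phi_t(y))=\nabla G(y)+t D^2G(y)X(y)+O(t^2)$, and the identity $D^2G\,X=\nabla(X\cdot\nabla G)-(DX)^T\nabla G$ allows one to write
\[
\tfrac{d}{dt}\bigl|_{t=0}\bigl[(D\Phi_t)^{-T}\nabla\bar u+\nabla G(\Phi_t)\bigr]=-(DX)^T\nabla u+\nabla(X\cdot\nabla G).
\]
Expanding $|\,\cdot\,|^2$ and using the elementary identity $(DX)^T\xi\cdot\xi=(\xi\otimes\xi):DX$, the coefficient of $t$ collects exactly to the formula \eqref{eq:1st_inner_variation}, the terms $|\nabla u|^2\mathrm{Id}:DX$ and $(\e|\nabla v|^2+\tfrac{(v-1)^2}{4\e})\mathrm{Id}:DX$ coming from the $J_t$-factor and the $-2(\nabla u\otimes\nabla u):DX$, $-2\e(\nabla v\otimes\nabla v):DX$ terms from the differentiation of $A_t$ and of $(D\Phi_t)^{-T}$, while the cross-term produces $2(\eta_\e+v^2)\nabla u\cdot\nabla(X\cdot\nabla G)$.

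The second-order step is the same exercise one order higher. One now needs $D\Phi_t=\mathrm{Id}+tDX+\tfrac{t^2}{2}DY+o(t^2)$, which yields
\begin{gather*}
(D\Phi_t)^{-T}=\mathrm{Id}-t(DX)^T-\tfrac{t^2}{2}(DY)^T+t^2\bigl((DX)^T\bigr)^2+o(t^2),\\
J_t=1+t\,\mathrm{div}\,X+\tfrac{t^2}{2}\bigl[\mathrm{div}\,Y+(\mathrm{div}\,X)^2-\mathrm{tr}((DX)^2)\bigr]+o(t^2),\\
\nabla G(\Phi_t)=\nabla G+tD^2G\,X+\tfrac{t^2}{2}\bigl[D^2G\,Y+D^3G(X,X)\bigr]+o(t^2),
\end{gather*}
(which explains the regularity hypothesis $G\in\mathscr C^3$). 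Substituting in the two integrals, expanding the squares, multiplying out with the expansion of $J_t$, and keeping only the coefficient of $t^2/2$ (multiplied by $2$ for the second derivative), all the $DY$-contributions assemble into the terms of \eqref{eq:2nd_inner_variation} proportional to $DY$ — which, evaluated at $t=0$, reproduce the first-variation structure with $X$ replaced by $Y$ — while the purely $DX$-quadratic contributions produce the algebraic combinations $(\mathrm{div}\,X)^2-\mathrm{tr}((DX)^2)$, $(DX)^2$, $|DX^T\xi|^2$ appearing in the statement. The third-order derivatives of $G$ are recognized via $D^3G(X,X)+D^2G\,Y=\nabla(X\cdot\nabla(X\cdot\nabla G))-2(DX)^T\nabla(X\cdot\nabla G)+\ldots$ so that the cross-terms in $\bar u,G$ regroup into the expressions $\nabla u\cdot\nabla(X\cdot\nabla(X\cdot\nabla G))$, $(\nabla u\otimes\nabla(X\cdot\nabla G)):(DX+(DX)^T)$ and $(\nabla u\cdot\nabla(X\cdot\nabla G))\,\mathrm{div}\,X$, and the purely $G$-quadratic piece gives $|\nabla(X\cdot\nabla G)|^2$.

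The conceptual core of the argument is elementary (change of variables plus Taylor expansion), so the only real obstacle is the algebraic bookkeeping of the second-order expansion — in particular, verifying that all contributions coming from $DY$ coincide with the first-variation formula applied to $Y$, and that all coefficients of the $DX$-quadratic combinations collapse to the compact tensorial form written in \eqref{eq:2nd_inner_variation}. One can minimize the computational burden by freely using the two identities $(DX)^T\xi\cdot\xi=(\xi\otimes\xi):DX$ and $\nabla(X\cdot\nabla G)=(DX)^T\nabla G+D^2G\,X$ throughout, and by noting that the first-order formula already handled all $DY$-contributions, so that the genuinely new work in \eqref{eq:2nd_inner_variation} lies in the quadratic-in-$DX$ and quadratic-in-$G$ sectors.
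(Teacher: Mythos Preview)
Your proposal is correct and follows essentially the same route as the paper: change of variables $x=\Phi_t(y)$ followed by Taylor expansion of $D\Phi_t$, $(D\Phi_t)^{-1}$, and $\det D\Phi_t$ to the required order. The only organizational difference is that the paper writes $u_t=\widehat u_t-(G_t-G)$ with $\widehat u_t=u\circ\Phi_{-t}$ and $G_t=G\circ\Phi_{-t}$, splitting $AT_\e(u_t,v_t)=\mathcal A(t)+\mathcal B(t)+\mathcal C(t)$ into a pure inner-variation piece plus cross and quadratic $G$-corrections, whereas you keep $u_t=\bar u\circ\Phi_{-t}+G$ and Taylor expand $\nabla G(\Phi_t(y))$ directly, recovering the same terms via the identity $D^2G\,X=\nabla(X\cdot\nabla G)-(DX)^T\nabla G$; both bookkeeping schemes yield the stated formulas.
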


\begin{remark}\label{Rem:dependance_G}
{\rm From \eqref{eq:1st_inner_variation} we see that if \((u,v)\in \A_g(\O)\) is a critical point of \(AT_\e\) in the sense that \( \dd AT_\e(u,v)[\phi,\psi]=0\) for all admissible \( (\phi,\psi)\), then \(\delta AT_\e (u,v)[X,G]\) is independent of the extension \(G\). Indeed, an integration by parts and  the first equation in \eqref{eq:PDE} show that, in this case,
$$\int_\O (\eta_\e+v^2) \nabla u \cdot \nabla (X\cdot \nabla G) \dd x=(\eta_\e+1)\int_{\p \O} \p_\nu u (X \cdot \nabla_\tau g) \dd \mathcal{H}^{N-1}\,.$$
since \(X \cdot \nu_\O =0\) on $\partial\O$ (\(\nabla_\tau \) is the tangential gradient on \(\p \O\)).  For the second inner variation \eqref{eq:2nd_inner_variation}, even if \( (u,v)\in \A_g(\O)\) satisfies \(\dd AT_\e(u,v)[\phi,\psi]=0\) for \( (\phi,\psi)\in [\C^\infty_c(\O)]^2\), the expression \(\delta^2 AT_\e(u,v)[X,G]\)  does depend on  the extension \(G\) and not only on the boundary data \(g\) {because of} the terms 
\[\int_\O(\eta_\e+v^2)\big[(\nabla u\cdot\nabla (X\cdot \nabla G)){\rm Id}-2\nabla u\otimes\nabla (X\cdot\nabla G)\big]:DX\,\dd x +\int_\O(\eta_\e+v^2)|\nabla(X\cdot\nabla G)|^2\,\dd x\,.\]
If we take \(X \in \C^\infty_c(\O;\R^N)\) and \(G \in \C^2(\R^N)\) an extension of \(g\) such that \( \supp G \cap \supp X =\emptyset\), and if we assume {\( (u,v) \in \A_g(\O)\) to be} a critical of \(AT_\e\), then the expression of the second-inner variation \eqref{eq:2nd_inner_variation} simplifies. Indeed the terms that contain \(Y=(DX)X\) disappear, since by regularity we have \(\delta AT_\e(u,v)[Y,G]=0\), and all terms containing \(G\) disappear. In this case, we end up with
\begin{align*}
\delta^2 AT_\e (u,v)[X,G]= &\int_\O(\eta_\e+v^2)\bigg\{\big(({\rm div}X)^2-{\rm tr}((DX)^2)\big) -4((\nabla u\otimes\nabla u):DX){\rm div} X \nonumber \\
&\qquad\qquad\qquad\qquad\qquad+4(\nabla u\otimes\nabla u):(DX)^2 +2 |DX^T\nabla u|^2\bigg\}\,\dd x \nonumber \\
&+\int_\O\bigg(\e|\nabla v|^2+\frac{(v-1)^2}{4\e}\bigg)\big(({\rm div}X)^2-{\rm tr}((DX)^2)\big) \dd x \nonumber  \\
&-4\int_\O\e((\nabla v\otimes\nabla v):DX){\rm div} X\,\dd x+4\int_\O\e(\nabla v\otimes\nabla v):(DX)^2\,\dd x \nonumber \\
&+2\int_\O\e |DX^T\nabla v|^2\,\dd x\,.
\end{align*}}
\end{remark}

\begin{proof}[Proof of Lemma \ref{lem:expressions_inner}]
For simplicity, we assume that $\partial\O$ is of class $\C^3$, $g \in \C^{3}(\partial\O)$, and we observe that the computation of $\delta AT_\e$  below only requires $\C^2$ regularity. We fix $X\in \C^2_c(\R^N;\R^N)$ and $G\in \C^3(\overline\O)$  satisfying $X\cdot\nu=0$ and $G=g$ on $\partial\O$. We set $\widehat u_t:=u\circ\Phi_{-t}$ and $G_t:=G\circ\Phi_{-t}$. 
Since $u_t=\widehat u_t-(G_t-G)$, we have 
\begin{align}
\nonumber AT_\e(u_t,v_t)&=AT_\e(\widehat u_t,v_t)-2\int_\O(\eta_\e+v_t^2)\nabla\widehat u_t\cdot\nabla (G_t-G)\,\dd x+ \int_\O(\eta_\e+v_t^2)|\nabla (G_t-G)|^2\,\dd x\\
\label{decompenergvarinner} &=:\mathcal{A}(t)+\mathcal{B}(t)+\mathcal{C}(t)\,.
\end{align}
We aim to compute the first and second derivatives at $t=0$ of $\mathcal{A}$, $\mathcal{B}$, and $\mathcal{C}$, starting with $\mathcal{A}$. 

By the chain rule in Sobolev spaces, we have
\begin{equation}\label{devgraduhatt}
\nabla \widehat u_t=[D\Phi_{-t}]^{T}\nabla u(\Phi_{-t})=[D\Phi_t(\Phi_{-t})]^{-T}\nabla u(\Phi_{-t})\,.
\end{equation}
On the other hand, in view of \eqref{expanphit}, we have 
\begin{equation}\label{expDphi-1}
[D\Phi_t]^{-1}= I-tDX-\frac{t^2}{2}Y+t^2(DX)^2+o(t^2)\,,
\end{equation}
and 
\begin{equation}\label{expandjacob}
{\rm det}(D\Phi_t)=1+t{\rm div}X+\frac{t^2}{2}\big[{\rm div}Y+({\rm div}X)^2-{\rm tr}((DX)^2)\big] +o(t^2)\,.
\end{equation}
Using the change of variables $x=\Phi_t(y)$, classical computations (see e.g. \cite{Le_Sternberg_2019}) yield

\begin{multline}\label{aprime0}
\mathcal{A}^\prime(0)=\int_\O (\eta_\e+v^2) \left[|\nabla u|^2 {\rm Id} -2 \nabla u \otimes \nabla u\right]:D X\dd x\\
+ \int_\O\left[ \left(\e|\nabla v|^2 +\frac{(v-1)^2}{4\e} \right){\rm Id}-2\e \nabla v \otimes \nabla v \right]:D X \dd x\,, 
\end{multline}
and 
\begin{align}
\nonumber \mathcal{A}^{\prime\prime}(0)=& \int_\O(\eta_\e+v^2)\bigg\{ |\nabla u|^2\big({\rm div}Y+({\rm div}X)^2-{\rm tr}((DX)^2)\big) -4((\nabla u\otimes\nabla u):DX){\rm div} X\\
\nonumber &\qquad\qquad\qquad\qquad\qquad\qquad\qquad\qquad-2(\nabla u\otimes\nabla u):(DY-2(DX)^2) +2 |DX^T\nabla u|^2\bigg\}\,\dd x\\
\nonumber &+\int_\O\bigg(\e|\nabla v|^2+\frac{(v-1)^2}{4\e}\bigg)\big({\rm div}Y+({\rm div}X)^2-{\rm tr}((DX)^2)\big) -4\e((\nabla v\otimes\nabla v):DX){\rm div} X\,\dd x\\
\label{aprimeprime0} &\qquad\qquad\qquad\qquad-2\int_\O\e(\nabla v\otimes\nabla v):(DY-2(DX)^2)\,\dd x +2\int_\O \e|DX^T\nabla v|^2\,\dd x\,.
\end{align}

Next we compute the derivatives of $\mathcal{B}$. To this purpose, we observe that 
$$\nabla \widehat u_{t+s}=[D\Phi_s(\Phi_{-s})]^{-T} \nabla \widehat u_t(\Phi_{-s})\,,\quad \nabla G_{t+s}=[D\Phi_s(\Phi_{-s})]^{-T}\nabla G_t(\Phi_{-s})\,,$$
and by \eqref{expDphi-1}, 
$$[D\Phi_s]^{-T}\nabla G_t-\nabla G(\Phi_s)=\nabla (G_t-G)-s(DX)^T \nabla (G_t-G)-s\nabla (X\cdot \nabla G)+o(s)\,.$$
Using the change of variables $x=\Phi_s(y)$ and \eqref{expDphi-1}--\eqref{expandjacob} again, we obtain 
\begin{multline*}
\int_\O(\eta_\e+v_{t+s}^2)\nabla\widehat u_{t+s}\cdot\nabla (G_{t+s}-G)\,\dd x\\
=\int_\O(\eta_\e+v_t^2)\big([D\Phi_s]^{-T}\nabla \widehat u_t\big)\cdot \big([D\Phi_s]^{-T}\nabla G_t -\nabla G(\Phi_s)\big){\rm det}(D\Phi_s)\,\dd y\\
=\int_\O(\eta_\e+v_{t}^2)\nabla\widehat u_{t}\cdot\nabla (G_{t}-G)\,\dd x-s\int_\O(\eta_\e+v^2)\nabla \widehat u_t\cdot\nabla (X\cdot\nabla G)\,\dd x\\
+s\int_\O(\eta_\e+v_{t}^2)\big[(\nabla\widehat u_{t}\cdot\nabla (G_{t}-G)){\rm div} X- (\nabla\widehat u_t\otimes\nabla(G_t-G)):\big(DX+(DX)^T\big)\big]\,\dd x+o(s)\,.
\end{multline*}
Consequently,
\begin{multline*}
\mathcal{B}^\prime(t)=2\int_\O(\eta_\e+v_t^2)\nabla \widehat u_t\cdot\nabla (X\cdot\nabla G)\,\dd x\\
-2\int_\O(\eta_\e+v_{t}^2)\big[(\nabla\widehat u_{t}\cdot\nabla (G_{t}-G)){\rm div} X- (\nabla\widehat u_t\otimes\nabla(G_t-G)):\big(DX+(DX)^T\big)\big]\,\dd x\, ,
\end{multline*}
and in particular,  
\begin{equation}\label{bprime0}
\mathcal{B}^\prime(0)=2\int_\O(\eta_\e+v^2)\nabla  u\cdot\nabla (X\cdot\nabla G)\,\dd x\,.
\end{equation}
To compute $\mathcal{B}^{\prime\prime}(0)$, we write $\mathcal{B}^\prime(t)=:I(t)-II(t)$, and we set for simplicity $H:=X\cdot \nabla G$.  
Since 
$$\nabla H(\Phi_t)=\nabla H +t\nabla(X\cdot \nabla H) -t(DX)^T \nabla H+o(t) \,,$$
we can change variables $y=\Phi_t(x)$ and use again \eqref{devgraduhatt}-\eqref{expDphi-1}-\eqref{expandjacob} to find
\begin{multline*}
I(t)= 2\int_\O(\eta_\e+v^2)\nabla u\cdot\nabla H\,\dd x+2t\int_\O(\eta_\e+v^2)\nabla  u\cdot\nabla (X\cdot\nabla H)\,\dd x\\
+2t\int_\O(\eta_\e+v^2)\big[(\nabla u\cdot\nabla H){\rm div} X-(\nabla u\otimes\nabla H):\big(DX+(DX)^T\big)\big]\,\dd x+o(t)\,.
\end{multline*}
Since $G_t-G=-tH+o(t)$, we easily infer that
$$II(t)=-2t\int_\O(\eta_\e+v^2)\big[(\nabla u\cdot\nabla H){\rm div}X-(\nabla u\otimes\nabla H):\big(DX+(DX)^T\big)\big]\,\dd x+o(t)\,, $$ 
and consequently,
\begin{multline}\label{bprimeprime0}
\mathcal{B}^{\prime\prime}(0)=2\int_\O(\eta_\e+v^2)\nabla  u\cdot\nabla (X\cdot\nabla H)\,\dd x\\
+4\int_\O(\eta_\e+v^2)\big[(\nabla u\cdot\nabla H){\rm div} X-(\nabla u\otimes\nabla H):\big(DX+(DX)^T\big)\big]\,\dd x\,.
\end{multline}
Similarly, 
$$\mathcal{C}(t)=t^2\int_\O(\eta_\e+v^2)|\nabla H|^2\,\dd x+o(t^2)\,, $$
so that 
\begin{equation}\label{derivatC0}
\mathcal{C}^{\prime}(0)=0\quad\text{and}\quad \mathcal{C}^{\prime\prime}(0)=2\int_\O(\eta_\e+v^2)|\nabla H|^2\,\dd x\,.
\end{equation}
In view of \eqref{decompenergvarinner}, gathering \eqref{aprime0}-\eqref{bprime0}-\eqref{derivatC0} or  \eqref{aprimeprime0}-\eqref{bprimeprime0}-\eqref{derivatC0}  leads to the announced formula for $\delta AT_\e (u,v)[X,G]$ and $\delta^2 AT_\e (u,v)[X,G]$ respectively. 
\end{proof}

Similar computations together with the well known first and second inner variations of a countably $\HH^{N-1}$-rectifiable set (see e.g.\ \cite[Chapter 2, Section 9]{Simon_1983}) lead to explicit expressions for the first and second inner variations of the Mumford-Shah functional. 

\begin{lemma}\label{lem:explicit_inner_stab2}
Let $\O\subset \R^N$ be a bounded open set of class $\C^{2}$, $g \in \mathscr C^2(\partial\O)$ and \( u \in SBV^2(\O) \). 
\begin{itemize}
\item[(i)] Then for all {vector field} \(X \in \C^1_c(\R^N;\R^N)\) {and every extension $G \in \mathscr C^2(\R^N)$ with $X \cdot \nu_\O=0$ and $G=g$ on $\partial\O$}, we have
\begin{eqnarray}\label{eq:1146}
\delta MS(u)[X,G] &=& \int_\O\left( |\nabla u|^2\rm{Id}-2\nabla u\otimes \nabla u\right):D X \dd x\nonumber\\
&&+ \int_{\widehat J_u} \dive^{\widehat J_u} X\dd\HH^{N-1}+2\int_\O \nabla u\cdot \nabla (X\cdot \nabla G) \dd x\, .
\end{eqnarray}
\item[ii)] If further $\O$ is of class $\C^3$ and $g \in \C^3(\partial \O)$, then for all {vector field} \(X \in \C^2_c(\R^N;\R^N)\) {and every extension $G \in \mathscr C^3(\R^N)$ with $X \cdot \nu_\O=0$ and $G=g$ on $\partial\O$}, we have
\begin{eqnarray*}
\delta^2 MS(u)[X,G]&  = &  \int_\O \left( |\nabla u|^2\rm{Id}-2\nabla u\otimes \nabla u \right):DY \dd x +\int_{\widehat{J}_u} \dive^{\widehat{J}_u}Y \dd \HH^{N-1}  \\
&& +\int_\O \left[|\nabla u|^2\left( (\dive X)^2-\tr ((DX)^2) \right) -4\big((\nabla u\otimes \nabla u) :DX\big)\dive X\right] \dd x  \\
&&+\int_\O \left[4(\nabla u\otimes \nabla u): (DX)^2+2 |DX^T\nabla u|^2\right] \dd x  \\
&&+\int_{\widehat J_u}\left[(\dive^{\widehat J_u} X)^2+\sum_{i=1}^{N-1}|(\partial_{\tau_i}X)^\perp|^2- \sum_{i,j=1}^{N-1} \left( \tau_i\cdot \partial_{\tau_j} X \right) \left( \tau_j \cdot \partial_{\tau_i} X \right)\right] \dd \HH^{N-1} \\
&&+2 \int_\O \nabla u\cdot \nabla (X\cdot \nabla (X\cdot G)) \dd x +2\int_\O |\nabla (X\cdot \nabla G)|^2 \dd x  \\
&&+4 \int_\O\Big[ \big(\nabla u\cdot \nabla (X\cdot \nabla G)\big) {\rm div}X-(\nabla u\otimes \nabla (X\cdot \nabla G)) :(DX+(DX)^T)\Big] \dd x\,,
\end{eqnarray*} 
where \((\tau_1,\dots,\tau_{N-1})\) is a basis of the tangent plane to \(\widehat J_u\) at a given point \(x\in \widehat J_u\). 
\end{itemize}
\end{lemma}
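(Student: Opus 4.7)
The plan is to split $MS(u_t) = \mathcal{B}(t) + \mathcal{S}(t)$ into a bulk contribution $\mathcal{B}(t) := \int_\O |\nabla u_t|^2\,\dd x$ and a surface contribution $\mathcal{S}(t) := \HH^{N-1}(\widehat J_{u_t})$, and to compute the first and second derivatives of each piece at $t = 0$ separately.

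For the bulk contribution, I observe that the integrand is exactly the first term in $AT_\e(u_t,v_t)$ with the weight $(\eta_\e + v^2)$ replaced by the constant $1$, and that the admissible deformation $u_t = u \circ \Phi_{-t} - G \circ \Phi_{-t} + G$ is identical to the one used in the proof of Lemma \ref{lem:expressions_inner}. The computations leading to \eqref{cpadim1}, \eqref{bprime0}, \eqref{bprimeprime0} and \eqref{derivatC0} therefore apply verbatim with $(\eta_\e + v^2) \equiv 1$ and $v \equiv 0$-related terms removed. This directly yields the bulk contributions to $\delta MS(u)[X,G]$ and $\delta^2 MS(u)[X,G]$ stated in the lemma, including the terms involving $Y = (DX)X$ and those arising from the extension $G$ (the $\nabla(X\cdot\nabla G)$ and $\nabla(X\cdot\nabla(X\cdot\nabla G))$ integrals).

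The main step is the analysis of the surface term. Setting $\hat u := u\mathbf{1}_\O + G\mathbf{1}_{\R^N \setminus \O} \in SBV^2(\R^N)$, so that $\widehat J_u = J_{\hat u}$, and defining $\hat u_t$ analogously, a pointwise computation using $G=g$ on $\partial\O$ and $\Phi_{-t}(\partial\O) = \partial\O$ gives
\[\hat u_t = \hat u \circ \Phi_{-t} - G \circ \Phi_{-t} + G \quad \text{on } \R^N.\]
Since $G\circ \Phi_{-t} - G \in H^1(\R^N)$ creates no jumps, $J_{\hat u_t} = J_{\hat u \circ \Phi_{-t}}$ up to $\HH^{N-1}$-null sets, and the $\mathscr C^2$-diffeomorphism property of $\Phi_t$ yields $J_{\hat u \circ \Phi_{-t}} = \Phi_t(J_{\hat u})$. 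Hence $\widehat J_{u_t} = \Phi_t(\widehat J_u)$ up to $\HH^{N-1}$-null sets, and the area formula for countably $\HH^{N-1}$-rectifiable sets gives
\[\mathcal{S}(t) = \int_{\widehat J_u} J^{\widehat J_u}\Phi_t(x)\,\dd \HH^{N-1}(x),\]
where $J^{\widehat J_u}\Phi_t(x) := \sqrt{\det\big[(D\Phi_t \tau_i \cdot D\Phi_t \tau_j)_{1\le i,j \le N-1}\big]}$ is the tangential Jacobian of $\Phi_t$ along $\widehat J_u$ expressed through any orthonormal basis $(\tau_1,\dots,\tau_{N-1})$ of $T_x \widehat J_u$.

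The remaining task, and the main obstacle, is the second-order Taylor expansion of $J^{\widehat J_u}\Phi_t$. Writing $M_t := (D\Phi_t\tau_i \cdot D\Phi_t \tau_j)_{i,j}$, plugging in the expansion $D\Phi_t = \mathrm{Id} + t\,DX + \tfrac{t^2}{2}DY + o(t^2)$ of \eqref{expanphit}, and developing $\sqrt{\det(\mathrm{Id} + 2tA_1 + t^2 A_2)} = 1 + t\,\mathrm{tr}(A_1) + \tfrac{t^2}{2}\big(2\mathrm{tr}(A_2) + (\mathrm{tr}(A_1))^2 - \mathrm{tr}(A_1^2)\big) + o(t^2)$, one must carefully separate tangential and normal components via $\partial_{\tau_i} X = \sum_{j} (\tau_j \cdot \partial_{\tau_i} X)\tau_j + (\partial_{\tau_i} X)^\perp$ to recover the classical identity (see \cite[Ch.~2, Sec.~9]{Simon_1983})
\[
J^{\widehat J_u}\Phi_t = 1 + t\,\dive^{\widehat J_u} X + \tfrac{t^2}{2}\Big[\dive^{\widehat J_u} Y + (\dive^{\widehat J_u} X)^2 + \textstyle\sum_i |(\partial_{\tau_i} X)^\perp|^2 - \sum_{i,j}(\tau_i \cdot \partial_{\tau_j} X)(\tau_j \cdot \partial_{\tau_i} X)\Big] + o(t^2),
\]
uniformly in $x \in \widehat J_u$. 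Differentiating $\mathcal{S}(t)$ twice under the integral at $t=0$ (justified by the uniform $o(t^2)$-control) then produces the surface contributions in (i) and (ii) of the lemma. Summing the bulk and surface contributions yields the announced formulas.
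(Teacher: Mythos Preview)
Your proposal is correct and follows essentially the same route as the paper: split into bulk and surface, reuse the computations of Lemma~\ref{lem:expressions_inner} with $(\eta_\e+v^2)\equiv 1$ for the bulk part (the paper only adds the remark that the chain rule for approximate gradients still gives $\nabla(u\circ\Phi_{-t})=[D\Phi_{-t}]^T\nabla u(\Phi_{-t})$, which you implicitly use), and treat the surface part via $\widehat J_{u_t}=\Phi_t(\widehat J_u)$ and the area formula, expanding the tangential Jacobian as in \cite[Ch.~2, Sec.~9]{Simon_1983}. Your write-up is in fact more explicit than the paper's sketch on the second-order expansion of $J^{\widehat J_u}\Phi_t$.
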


\begin{proof}
The second inner variation of the part \(\int_\O |\nabla u|^2 \dd x\) is computed exactly as in the proof of Lemma \ref{lem:expressions_inner}, {recalling that the chain rule still holds for the approximate gradient} \(\nabla (u\circ \Phi_{-t})=[D \Phi_{-t}]^T\nabla u(\Phi_{-t})\). For the singular part of the energy, {we use that $\mathcal{H}^{N-1}(\widehat J_{u_t})=\mathcal{H}^{N-1}( J_{\hat{u}_t})$ where \(\hat{u}_t=\hat u \circ \Phi_{-t}\) and $\hat u=u {\bf 1}_\O + G {\bf 1}_{\R^N\setminus \O}$}. The second variation of such a functional is computed with the area formula as in \cite[Chapter 2]{Simon_1983} {together with the following geometric formulas 
\begin{multline*}
(\dive X)^2-\tr[(\nabla X)^2]-2 ((\nu_u \otimes \nu_u):D X) \dive X+2 (\nu_u \otimes \nu_u): (D X)^2+|D X^T \nu_u|^2 \\
= (\dive^{J_u} X)^2+\sum_{i=1}^{N-1}|(\partial_{\tau_i}X)^\perp|^2- \sum_{i,j=1}^{N-1} \left( \tau_i\cdot \partial_{\tau_j} X \right) \left( \tau_j \cdot \partial_{\tau_i} X \right) +\left( (\nu_u \otimes \nu_u):\nabla X\right)^2
\end{multline*}
stated in the proof of Theorem 1.1 p.\ 1851--1852 in \cite{Le_2011}.}
\end{proof}

\begin{remark}\label{remfinale}
{\rm {As in Remark \ref{Rem:dependance_G}, \(\delta MS(u)[X,G]\) is independent of} the extension \(G\) when \(u\) is a critical point for the outer variations of \(MS\), {while \(\delta^2 MS(u)[X,G]\) does depend} on the extension \(G\) in general. {If \(u\) satisfies \(\delta MS(u)[X,G]=0\), then for all \(X\in \C^\infty_c(\O,\R^N)\) with \( \supp G \cap \supp X = \emptyset\), then the formula of the second inner variation reduces to}
\begin{eqnarray*}
\delta^2 MS(u)[X,G] &= & \int_\O |\nabla u|^2\left( (\dive X)^2-\tr ((DX)^2) \right) -4((\nabla u\otimes \nabla u) :D X) \dive X \dd x \nonumber \\
&+&\int_\O 4\nabla u\otimes \nabla u: (DX)^2+2 |DX^T\nabla u|^2 \dd x \nonumber \\
&+& \int_{\widehat J_u}\left[(\dive^{\widehat J_u} X)^2+\sum_{i=1}^{N-1}|(\partial_{\tau_i}X)^\perp|^2- \sum_{i,j=1}^{N-1} \left( \tau_i\cdot \partial_{\tau_j} X \right) \left( \tau_j \cdot \partial_{\tau_i} X \right)\right] \dd \HH^{N-1}\,. \nonumber\\
\end{eqnarray*}}
\end{remark}

\bibliographystyle{abbrv}

\end{document}